\def\R{\hbox{\bf R}}
\def\Z{\hbox{\bf Z}}
\def\A{{\cal A}}
\def\F{{\cal F}}
\def\b{\mathbf{b}}
\def\a{\mathbf{a}}
\def\K{{\cal K}}
\def\<{\langle}
\def\>{\rangle}
\newcommand{\ba}{\begin{eqnarray}}
\newcommand{\ea}{\end{eqnarray}}
\newcommand{\mres}{\mathbin{\vrule height 1.6ex depth 0pt width
 0.13ex\vrule height 0.13ex depth 0pt width
 1.3ex}}
\newtheorem{thm}{Theorem}[section]
\newtheorem{theorem}[thm]{Theorem}
\newtheorem{definition}[thm]{Definition}
\newtheorem{lemma}[thm]{Lemma}
\newtheorem{proposition}[thm]{Proposition}
\newtheorem{corollary}[thm]{Corollary}
\newtheorem{rem}[thm]{Remark}
\newcommand{\eps}{\epsilon}
\numberwithin{equation}{section}
\renewcommand{\R}{{\mathbb R}}
\renewcommand{\Z}{{\mathbb Z}}
\begin{document}

\title{CYCLE CHARACTERIZATION OF THE AUBRY SET FOR  WEAKLY COUPLED HAMILTON--JACOBI SYSTEMS}
\author{H.Ibrahim\thanks{Lebanese University, Mathematics Department,
Hadeth, Beirut, Lebanon,  {\tt ibrahim@cermics.enpc.fr}},
\,A.Siconolfi\thanks{Mathematics Department, University of Rome "La
Sapienza",  Italy, {\tt siconolf@mat.uniroma1.it}}, \,
S.Zabad\thanks{Mathematics Department, University of Rome "La
Sapienza",  Italy, {\tt zabad@mat.uniroma1.it}}} \maketitle


\begin{abstract}
We study a class of weakly coupled systems of Hamilton--Jacobi
equations using the random frame introduced in \cite{siconolfi3}. We
provide a cycle condition characterizing the points of
Aubry set. This  generalizes a property already known in the scalar case. \\

\bigskip

\noindent \textbf{Key words. }weakly coupled systems of Hamilton-Jacobi equations, viscosity solutions, weak KAM Theory.
\bigskip

\noindent \textbf{AMS subject classifications.} 35F21, 49L25, 37J50.

\end{abstract}


\section{\textbf{Introduction}}
\parskip +3pt
True to the title, the object of the paper is to provide a dynamical
characterization of the Aubry set associated to weakly coupled
Hamilton-Jacobi systems posed on the flat torus $\mathbb T^N$. We
consider the one--parameter family
$$
H_{i}(x,Du_{i})+\sum_{j=1}^{m}a_{ij}u_{j}(x)=\alpha \quad \mbox{in }
\mathbb{T}^{N} \quad \mbox{ for every } i \in \{1,\cdots,m\},
$$
with  $m \ge2$ and $\alpha$ varying in $\R$. Here $\mathbf
u=(u_1,\cdots,u_m)$ is the unknown function, $H_1, \cdots, H_m$ are
continuous Hamiltonians, convex and superlinear in the momentum
variable, and $A = \left (a_{ij} \right )$ is a coupling matrix.

As primarily pointed out in  \cite{Fabio}, \cite{Mitake2}, this kind
of systems exhibit properties and phenomena similar to the ones
already studied for scalar Eikonal equations,  under suitable
assumptions on the coupling matrix.

In particular   the minimum $\alpha$ for which the system has
subsolutions, to be understood in viscosity or equivalently a.e.
sense, is the unique value for which it admits viscosity solutions.
We call this threshold value  critical, and  denote it  by $\beta$
in what follows.

The obstruction to the existence of subsolutions below the critical
value  is not spread indistinctly on the torus, but instead
concentrated on the Aubry set,  denoted by $\A$. This fact, proved
in \cite{Davini}, generalizes what happens in the scalar case. It
 has relevant consequences on the structure of the critical
subsolutions and allow defining a fundamental class of critical
solutions, see Definition \ref{pdeau}. However, so far, no
geometrical/dynamical description of $\A$ is available, and the aim
of our   investigation is precisely to mend this gap.

To deepen knowledge of the Aubry set  seems important  for the
understanding of  the interplay between equational and dynamical
facts in the study of the system, which  is at the core of an
adapted weak KAM theory. See \cite{Fathh} for a comprehensive
treatment of this topic in the scalar case. This will hopefully
allow to attack some open problem in the field, the most relevant
being the existence of regular subsolutions. Another related
application, at least when the Hamiltonians are of Tonelli type, is
in the analysis of random evolutions associated to weakly coupled
systems, see \cite{DaZaSi}.

 According to \cite{Davini}, there is  a restriction in the values that a critical (or
supercritical)  subsolution can assume at any given point. This is a
property which genuinely depends on the vectorial structure
  of the problem and has no counterpart in the scalar case.
  Due to stability properties of viscosity
subsolutions and the convex nature of the problem, these admissible
values make up a closed convex set at any point $y$ of the torus. We
denote it  by $F_\beta(y)$.

The restriction becomes severe on the Aubry set where $F_\beta(y)$
is a one--dimensional set, while we have proved, to complete the
picture, that it possesses nonempty interior outside $\mathcal A$,
see Proposition \ref{prop30}. In a nutshell what we are doing in the
paper is to provide a dynamical dressing to this striking dichotomy.

To this purpose, we take advantage of the action functional
introduced  in \cite{siconolfi3} in  relation to the systems. We
also
 make a crucial use of  the characterization of admissible values
through the action functional computed on random cycles there
established, see Theorem \ref{t18}.

The action functional is defined exploiting the underlying random
structure given by the Markov chain with $-A$ as transition matrix.
Following the approach of \cite{siconolfi3}, we provide a
presentation of the random frame based on explicit computations and
avoid using advanced probabilistic notions.  This makes  the text
mostly self contained accessible to readers  without specific
background in probability.

The starting point is the cycle characterization of the Aubry set
holding in the scalar case, see \cite{siconolfi}. It asserts that a
point is in the Aubry set if and only there exists, for some $\eps$
positive,  a sequence of cycles based on it, and  defined in $[0,t]$
with $t > \eps$, on which the action functional is infinitesimal. Of
course the role of the lower bound $\eps$ is crucial, otherwise the
property should be trivially true for any element of the torus.

To generalize it in the context of systems,  we need using random
cycles defined on intervals with a stopping time, say $\tau$,  as
right endpoint. We call it $\tau$--cycles, see Appendix
\ref{randomset}. This makes the adaptation of the  $\eps$--condition
quite painful. To perform the task, we use the notion of stopping
time strictly greater than $\eps$, $\tau \gg \eps$, see Definition
\ref{superstrict}, which seems rather natural but that we were not
able to find in the literature. We therefore present  in Section
\ref{propstoptim} some related basic results. We, in particular,
prove that the exponential of the coupling matrix related to a $\tau
\gg \eps$ is strictly positive, see Proposition \ref{corollozzo}.
This property will be repeatedly used throughout the paper.

We moreover provide  a strengthened version of the aforementioned
Theorem \ref{t18}, roughly speaking showing that the $\tau$--cycles
with $\tau \gg \eps$ are enough to characterize admissible values
for critical subsolutions, see Theorem \ref{t20}. This result is in
turn based on a cycle iteration technique we explain in Section
\ref{reptcyc}.

\smallskip

The main output is presented in two versions, see Theorems
 \ref{t19}, \ref{t21}, with the latter one, somehow
more geometrically flavored, exploiting the notion of characteristic
vector of a stopping time, see Definition \ref{charactsc}.

The paper is organized as follows: in section \ref{settingpb} we
introduce the system under study and recall
 some basic preliminary facts. Section \ref{propstoptim} is devoted to illustrate  some  properties of stopping times and
 the related shift flows. Section \ref{reptcyc} is about the cycle iteration
 technique.
 In section \ref{dynpropaub} we give  the main results. Finally the two
 appendices \ref{stocmatrx} and \ref{cadpath} collect basic material on stochastic matrices and spaces of c\`{a}dl\`{a}g paths.
  In Appendix \ref{randomset} we give a broad picture of  the random frame  we work within.

 \smallskip


\section{\textbf{ Assumptions and preliminary results}}\label{settingpb}
\parskip +3pt
 In this section  we fix some notations and  write down the   problem with the standing assumptions.
 We also present some basic results on weakly coupled systems we will need in the following.

 \medskip

 We deal with a weakly coupled system of Hamilton-Jacobi equations of the form
\begin{equation}\label{e1} \tag{HJ$\alpha$}
H_{i}(x,Du_{i})+\sum_{j=1}^{m}a_{ij}u_{j}(x)=\alpha \quad \mbox{in } \mathbb{T}^{N}
\quad \mbox{ for every } i \in \{1,\cdots,m\},
\end{equation}

\noindent where $m \geq 2$,  $\alpha$ is a real constant, $A$ is an
$m \times m$ matrix, the so--called coupling  matrix,  and $H_{1},
\cdots,...,H_{m}$ are Hamiltonians. The $H_i$ satisfy the following
set of assumptions for all $i \in \{1,\cdots,m\}$ :
\begin{itemize}
\item[(H1)] $ H_{i}:\mathbb{T}^{N}\times\mathbb{R}^{N}\rightarrow\mathbb{R} \quad \mbox{is
continuous}$;
 \item[(H2)] $p\mapsto H_{i}(x,p) \quad \mbox{is convex for every } x \in
 \mathbb{T}^{N}
 $;
  \item[(H3)]$p\mapsto H_{i}(x,p) \quad \mbox{is superlinear for every } x \in \mathbb{T}^{N}. $
 \end{itemize}
The superlinearity condition (H3) allows to define the
corresponding Lagrangians through the Fenchel transform, namely
$$L_i(x,q)= \max_{p\in\mathbb{R}^{N}} \{p \cdot q - H_i(x,p)\} \quad\hbox{for any $i$}.$$
The coupling  matrix $A =(a_{ij})$  satisfies:
\begin{itemize}

\item[(A1)]$a_{ij}\leq 0 \mbox{ for every }i\neq j$;
\item[(A2)]$\sum \limits_{j=1}^{m}a_{ij} =0 \mbox { for any } i \in \{1,\cdots,m\};$
\item[(A3)] it is irreducible,
 i.e for every $W \subsetneq \{1,2,...,m\}$ there exists $i \in W $ and $j \notin W$ such that $a_{ij}<0.$
\end{itemize}
\smallskip
 Roughly speaking (A3) means that the system cannot split into independent subsytems. We remark that the assumptions (A1)
 and (A2) on the coupling matrix are equivalent to $e^{-At}$ being a stochastic matrix
for any $t\geq 0$ and due to irreducibility we get $e^{-A t}$ is
positive for any $t >0$, as made precise in Appendix
\ref{stocmatrx}.

We will consider (sub/super) solutions of the system in the
viscosity sense, see \cite{Davini}, \cite{siconolfi3} for the
definition. We recall that, as usual in convex coercive problems,
all the subsolutions are Lipschitz--continuous and the notions of
viscosity and a.e. subsolutions are equivalent.
\medskip

We now define the critical value $\beta$ as
$$\beta = \inf \{ \alpha \in \mathbb{R} \mid  \mbox{(HJ$\alpha$) admits subsolutions}\},$$
and write down  the critical system
\begin{equation}\label{ecritical} \tag{HJ$\beta$}
H_{i}(x,Du_{i})+\sum_{j=1}^{m}a_{ij}u_{j}(x)= \beta \quad \mbox{in }
\mathbb{T}^{N} \quad \mbox{ for every } i \in \{1,\cdots,m\}.
\end{equation}

\smallskip

The critical system (\ref{ecritical}) is the unique one in the
family \eqref{e1}, $\alpha \in \mathbb R$, for which there are
solutions. By critical (sub/super) solutions, we will mean
(sub/super) solutions of \eqref{ecritical}.

\smallskip
We deduce from the coercivity condition:

\smallskip
\begin{proposition} \label{lip} The family of subsolutions to \eqref{e1} are
equiLipschitz--continuous for any $\alpha \geq \beta$.

\end{proposition}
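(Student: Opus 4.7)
The plan is to exploit a strictly positive invariant measure for the coupling matrix to decouple the subsolution inequalities into a scalar estimate that can be closed by superlinearity.

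Because $-A$ is the generator of an irreducible continuous-time Markov chain on $\{1,\dots,m\}$ (by (A1)--(A3); see Appendix \ref{stocmatrx}), there exists a strictly positive probability vector $\pi=(\pi_1,\dots,\pi_m)$ satisfying $\pi^T A = 0$. Given any a.e.\ subsolution $\mathbf u=(u_1,\dots,u_m)$ of \eqref{e1}, multiply the $i$-th inequality by $\pi_i$ and sum over $i$:
\begin{equation*}
\sum_{i=1}^m \pi_i\, H_i(x,Du_i(x)) + \sum_{j=1}^m \Bigl(\sum_{i=1}^m \pi_i a_{ij}\Bigr)u_j(x) \le \alpha \quad \text{a.e.}\ x \in \mathbb{T}^N.
\end{equation*}
The parenthesized sum is the $j$-th entry of $\pi^T A$, hence vanishes, leaving the $\mathbf u$-independent bound
\begin{equation*}
\sum_{i=1}^m \pi_i H_i(x, Du_i(x)) \le \alpha \quad \text{a.e.}
\end{equation*}

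Next, since each $H_i$ is continuous and superlinear on $\mathbb{T}^N \times \mathbb{R}^N$, it is bounded from below by some $-K_i \in \mathbb{R}$. Isolating a single index and majorizing the remaining terms yields
\begin{equation*}
\pi_i H_i(x, Du_i(x)) \le \alpha + \sum_{j\neq i} \pi_j K_j \quad \text{a.e.,}
\end{equation*}
so $H_i(x, Du_i(x))$ is uniformly bounded above by a constant depending only on $\alpha$, on $\pi$, and on the $K_j$. The superlinearity of $H_i$ then gives an a.e.\ bound $|Du_i(x)| \le L_i(\alpha)$ independent of the chosen subsolution. Since viscosity and a.e.\ subsolutions coincide and are continuous, this translates into a Lipschitz estimate on each $u_i$ with a constant depending only on $\alpha$ and the data of the system, so the family is equi-Lipschitz.

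The crux is the choice of weighting. An unweighted sum would leave the coupling term intact; only $\pi^T A = 0$ with $\pi > 0$ simultaneously annihilates the $u_j$ dependence and preserves the sign of each $\pi_i H_i$, allowing it to be separated from the other summands via the universal lower bound on the Hamiltonians. Irreducibility (A3) is essential here: without it, a strictly positive stationary vector need not exist, some $\pi_i$ could vanish, and the coercivity argument would not close uniformly in $\mathbf u$.
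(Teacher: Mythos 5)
The paper does not actually supply a proof of Proposition \ref{lip}; it is asserted as an immediate consequence of coercivity, with the phrase ``We deduce from the coercivity condition'' and no further argument, so there is no in-text proof to compare against. Your proposal is correct and fills that gap. The key device --- left-multiplying the $m$ inequalities by the strictly positive stationary vector $\pi$ (whose existence and positivity follow from (A1)--(A3) and Perron--Frobenius, cf.\ Propositions \ref{prop34} and \ref{prop35}) so that $\pi^T A = 0$ annihilates the coupling term --- is exactly what is needed to reduce to the scalar estimate $\sum_i \pi_i H_i(x,Du_i(x)) \le \alpha$, after which the usual scalar coercivity argument closes the estimate with a Lipschitz constant depending only on $\alpha$ and the data. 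One step you invoke without justification deserves a line: that each $H_j$ is bounded below on $\mathbb{T}^N \times \mathbb{R}^N$. This is true, but it is not a consequence of continuity and pointwise superlinearity alone; one also uses convexity (H2) together with compactness of $\mathbb{T}^N$. A quick way to see it: if $H_j(x_n,p_n) \to -\infty$ with (after passing to a subsequence) $x_n \to x_*$, then $|p_n| \to \infty$, and convexity along the segment from $0$ to $p_n$ gives $H_j(x_n, R p_n/|p_n|) \le \frac{R}{|p_n|} H_j(x_n,p_n) + \big(1-\frac{R}{|p_n|}\big) H_j(x_n,0)$, whose right-hand side stays bounded above as $n \to \infty$ for each fixed $R$; letting $R \to \infty$ then contradicts the superlinearity of $H_j(x_*,\cdot)$. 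With that observation in place the argument is complete.
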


\smallskip

As  already recalled, a relevant property of systems is that not all
values in $\R^m$  are admissible for  subsolutions to (\ref{e1}) at
a given point of the torus. This rigidity phenomenon will play a
major role in what follows. We define for $\alpha \geq \beta$ and $x
\in \mathbb{T}^{N}$,

\begin{equation}\label{e42}
    F_\alpha(x)= \{ \mathbf{b} \in \mathbb {R}^{m} \mid \exists \;
    \mathbf{u} \; \mbox{subsolution to } \eqref{e1} \mbox{ with }\mathbf{u}(x)= \mathbf{b} \}.
\end{equation}

\smallskip

It is clear that
 $$\mathbf b \in F_\alpha (x) \; \Rightarrow \; \mathbf b + \lambda \,
 \mathbf 1 \in F_\alpha (x) \qquad\hbox{for any $\lambda \in \R$,}$$
where  $\mathbf{1}$ is the vector of $\mathbb R^m$ with all the
components equal to $1$. It is also apparent from the stability
properties of subsolutions and the convex character of the
Hamiltonians, that $F_\alpha$ is closed and convex at any $x$. We
proceed recalling the PDE definition of Aubry set $\mathcal A$ of
the system, see \cite{Davini}, \cite{siconolfi3}, \cite{SiZa}.
\begin{definition}\label{pdeau}
A point $y$ belongs to the Aubry set if and only if the maximal
critical subsolution  taking an admissible value at $y$ is a
solution to (\ref{ecritical}).
\end{definition}

\smallskip

\begin{definition}
A  critical subsolution $\mathbf u$    is said {\em locally strict}
at a point $y \in \mathbb T^N$ if there is a neighborhood $U$ of $y$
and a positive constant $\delta$ with
\[ H_i(x,Du_i) +\sum_{j=1}^{m}a_{ij}u_{j}(x) \leq \beta- \delta \quad\hbox{for any $i \in \{1, \cdots, m\}
$, a.e. $x \in U$.}\]
\end{definition}

We recall the  following property:
\smallskip

\begin{proposition} (\cite{Davini} Theorem 3.13) \label{DZ1}  A point
 $y \not \in \A$ if and only if  there exists a critical subsolution
 locally strict at $y$.
\end{proposition}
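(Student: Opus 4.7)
The plan is to prove both implications leveraging the characterization recalled in the introduction that $y \notin \A$ if and only if $F_\beta(y)$ has nonempty interior, namely Proposition \ref{prop30}. The key algebraic tool will be the consequence of (A1)--(A2) that $a_{ki} \leq 0$ for $k \neq i$, and hence $a_{ii} \geq 0$.

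For $(\Leftarrow)$, let $\mathbf u$ be a critical subsolution locally strict at $y$ with margin $\delta > 0$ on a neighborhood $U$. For each $i \in \{1,\ldots,m\}$ I will perform the one--component perturbation
\[
\mathbf v = \mathbf u + \eps\,\varphi\,\mathbf e_i,
\]
where $\varphi$ is a smooth nonnegative bump compactly supported in $U$ with $\varphi(y) > 0$. Checking the subsolution inequality equation by equation: for $k \neq i$ the gradient of $v_k = u_k$ is unchanged and the coupling term is modified by the nonpositive amount $a_{ki}\eps\varphi$, so the inequality $\leq \beta - \delta$ survives; for $k = i$ the two extra contributions $H_i(x, Du_i + \eps D\varphi) - H_i(x, Du_i)$ and $a_{ii}\eps\varphi \geq 0$ can both be absorbed into $\delta$ provided $\eps$ is small, thanks to the equi--Lipschitzness of subsolutions (Proposition \ref{lip}) and the local uniform continuity of $H_i$. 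Since $\mathbf v = \mathbf u$ outside $\hbox{supp}\,\varphi$, a standard gluing gives a global critical subsolution realizing $\mathbf u(y) + \eps\varphi(y)\,\mathbf e_i \in F_\beta(y)$. Letting $i$ vary and using the convexity of $F_\beta(y)$, one sees that $\mathbf u(y)$ is an interior point of $F_\beta(y)$, whence $y \notin \A$ by Proposition \ref{prop30}.

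For $(\Rightarrow)$, assume $y \notin \A$, so that $F_\beta(y)$ contains an open ball. The target is to produce a critical subsolution which is strict on an entire neighborhood of $y$. The natural starting point is to select several critical subsolutions $\mathbf u^{(i)}$ realizing distinct values $\mathbf b + \eta\,\mathbf e_i$ at $y$, for $\mathbf b$ interior to $F_\beta(y)$ and $\eta > 0$ small, and to combine them via a convex combination $\mathbf w = \sum_i \lambda_i \mathbf u^{(i)}$, which remains a critical subsolution by (H2); the coupling corrections $a_{ki}\eta$ with $i \neq k$ bring slack into equation $k$ at $y$, and irreducibility (A3) ensures at least one of these contributions is strictly negative. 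The main obstacle I anticipate is to upgrade this pointwise slack at $y$ to a uniform strict margin on a whole neighborhood of $y$; I would tackle it either by a continuity and compactness argument exploiting the smoothing effect of the convex combination, or more robustly by invoking the action--functional representation of \cite{siconolfi3} (Theorem \ref{t18}), which transforms the condition $y \notin \A$ into a quantitative positive lower bound on the action over random cycles based at $y$, and thereby into genuine local strictness.
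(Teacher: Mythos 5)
The paper does not prove this statement at all: it is imported verbatim from \cite{Davini} (Theorem~3.13) and used as a black box. More seriously, your argument is circular within the paper's logical order. Proposition \ref{prop30}, on which you lean in both directions, is itself proved in the paper by invoking Proposition \ref{DZ1} as its very first step (``the values $\mathbf b$ corresponding to critical subsolutions locally strict at $y$ make up a nonempty set in force of Proposition \ref{DZ1}\ldots''). Using Proposition \ref{prop30} to derive Proposition \ref{DZ1} therefore begs the question. The fallback you mention for $(\Rightarrow)$, namely the action-functional characterization via Theorem \ref{t18} (or Theorems \ref{t19}, \ref{t20}), does not escape the loop either, since those results are established downstream of Propositions \ref{prop28} and \ref{prop30}.

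Two further points. In the direction $(\Leftarrow)$ the bump perturbation you propose is essentially the one the paper runs inside the proof of Proposition \ref{prop30}, and it is sound; but the concluding inference ``$\mathbf u(y)$ interior $\Rightarrow y \notin \A$'' should be credited to Proposition \ref{prop28} (whose contrapositive applies, a line in $\R^m$ with $m\ge2$ having empty interior), not to Proposition \ref{prop30}, which is stated \emph{conditionally on} $y \notin \A$ and hence carries no information when $y \in \A$. In the direction $(\Rightarrow)$ you correctly flag the real obstruction --- promoting slack at the single point $y$ to a uniform strict margin on a whole neighborhood --- but neither of your two suggested repairs is actually carried out, and this upgrade is precisely the substantive content of Davini--Zavidovique's Theorem~3.13 that a self-contained proof would have to supply, starting from Definition \ref{pdeau} and the structure of maximal critical subsolutions, none of which appears in your sketch.
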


\medskip
As pointed out in the Introduction the admissible values make up a
one--dimensional set on $\A$.
\smallskip

\begin{proposition}\label{prop28} (\cite{Davini} Theorem 5.1) An element $y$  belongs  to the Aubry set
if and only if
$$F_\beta(y)= \{\mathbf{b}+\lambda \, \mathbf{1} \mid \lambda \in \R\}$$
where $\mathbf{b}$  is some vector in $\mathbb R^m$ depending on
$y$.
\end{proposition}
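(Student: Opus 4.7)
The inclusion $\mathbf{b} + \lambda\mathbf{1} \in F_\beta(y)$ for every $\lambda \in \R$ whenever $\mathbf{b} \in F_\beta(y)$ is already noted in the text and is immediate from (A2), since $\sum_j a_{ij}(u_j + \lambda) = \sum_j a_{ij} u_j$. Hence in both directions I need only analyse whether $F_\beta(y)$ is exactly a translate of $\R\mathbf{1}$ or is transversally larger.

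\textbf{Direction $y \in \mathcal A \Rightarrow F_\beta(y) = \mathbf b + \R\mathbf 1$.} Fix $\mathbf b_1,\mathbf b_2 \in F_\beta(y)$ and let $\mathbf u^k$ denote the maximal critical subsolution with $\mathbf u^k(y) = \mathbf b_k$. By Definition \ref{pdeau} both $\mathbf u^k$ are in fact critical solutions. Set
\[
\lambda^* = \inf\{\lambda \in \R : \mathbf u^1 + \lambda\mathbf 1 \geq \mathbf u^2 \mbox{ pointwise on } \mathbb T^N\},
\]
which is finite and attained with a contact point $(x_0,i_0)$ by continuity and compactness of $\mathbb T^N$. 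The translate $\mathbf u^1 + \lambda^*\mathbf 1$ is still a critical solution by (A2). A viscosity comparison of the $i_0$-th equation at $x_0$, where $u^1_{i_0}+\lambda^*$ touches $u^2_{i_0}$ from above (so that the gradients match in the viscosity sense), yields
\[
\sum_j a_{i_0 j}\bigl(u^1_j + \lambda^* - u^2_j\bigr)(x_0) = 0.
\]
Each summand is non-positive by (A1) together with the minimality of $\lambda^*$, so each vanishes, and irreducibility (A3) spreads equality from $i_0$ to every component at $x_0$. A characteristic/weak--KAM propagation argument then extends the equality to all of $\mathbb T^N$, whence $\mathbf b_2 = \mathbf b_1 + \lambda^*\mathbf 1$.

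\textbf{Direction $y \notin \mathcal A \Rightarrow \dim F_\beta(y) \geq 2$.} By Proposition \ref{DZ1} there exists a critical subsolution $\mathbf u$ locally strict at $y$, say with slack $\delta > 0$ on a neighborhood $U$. Pick any index $i \in \{1,\dots,m\}$ and a smooth cut-off $\phi \geq 0$ compactly supported in $U$ with $\phi(y) > 0$, and define $\mathbf u^\varepsilon := \mathbf u + \varepsilon\phi\,\mathbf e_i$. For $k \neq i$ the $k$-th equation gains the term $\varepsilon a_{ki}\phi \leq 0$ by (A1), so subsolution validity is preserved; for the $i$-th equation the perturbation adds an $O(\varepsilon)$ contribution that is absorbed by $\delta$ as soon as $\varepsilon$ is small enough. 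Consequently $\mathbf u(y) + \varepsilon\phi(y)\mathbf e_i \in F_\beta(y)$, and since $m \geq 2$ the vector $\mathbf e_i$ is not parallel to $\mathbf 1$, so $F_\beta(y)$ is strictly larger than $\mathbf u(y) + \R\mathbf 1$.

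The delicate step is the propagation in the forward direction: irreducibility instantly spreads equality across components at the single contact point $(x_0,i_0)$, but passing from that point to the whole torus in the viscosity framework requires invariance of the contact set under the Euler--Lagrange / characteristic flow of the system, which is the technical heart of the argument and where the specifically vectorial features of weakly coupled Hamilton--Jacobi systems really come into play.
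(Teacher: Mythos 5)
The paper does not prove this proposition; it cites it as \cite{Davini}, Theorem 5.1. What the paper \emph{does} prove in this direction is Proposition \ref{prop30}, which establishes the contrapositive of one implication ($y\notin\mathcal A$ implies $F_\beta(y)$ has nonempty interior) by a careful quadratic-bump construction. Your second paragraph covers the same ground less precisely (you assert "an $O(\varepsilon)$ contribution absorbed by $\delta$" without pinning down the modulus of continuity of $H_i$ or the Lipschitz bound that makes this uniform), but the idea matches the paper's and could be made rigorous along those lines.

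The real issue is the first direction. You have correctly identified the obstruction yourself and then not overcome it. After obtaining equality of all components at the single contact point $x_0$ (via (A1), minimality of $\lambda^*$, and irreducibility), you need this equality to propagate from $x_0$ to $y$, and you dispatch that with the sentence ``A characteristic/weak--KAM propagation argument then extends the equality to all of $\mathbb T^N$''. That sentence is the entire theorem. In the scalar case, propagating a touching of two critical solutions off a single contact point already requires Mañé potential/Peierls barrier machinery or semiconcavity plus calibrated curves; for weakly coupled systems there is an additional difficulty because the contact set must be shown to be simultaneously invariant for all $m$ components under a random (Markov-switched) characteristic flow. Deferring it as ``the technical heart'' means the forward implication is unproved. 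There is also a smaller but genuine issue earlier: from ``$u^1_{i_0}+\lambda^*$ touches $u^2_{i_0}$ from above at $x_0$'' you cannot simply subtract the two $i_0$-th equations, since neither $u^1_{i_0}$ nor $u^2_{i_0}$ need be differentiable at $x_0$; one must use a doubling-of-variables (or sup-/inf-convolution) comparison argument for viscosity solutions to legitimately reach $\sum_j a_{i_0 j}(u^1_j+\lambda^*-u^2_j)(x_0)\le 0$, after which (A1) and nonnegativity give equality termwise. In short: the backward direction is essentially the paper's Proposition \ref{prop30} in looser form, and the forward direction has a gap precisely where the substance of \cite{Davini}'s Theorem 5.1 lies.
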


\smallskip

On the contrary, if $y \not\in\mathcal A$, the admissible set
possesses nonempty interior, which is characterized as follows:

\smallskip

\begin{proposition}\label{prop30}
Given $y \notin {\mathcal A} $,  the interior of $F_\beta(y)$ is
nonempty, and $\mathbf b \in \R^m$ is an internal point of
$F_\beta(y)$ if and only if there is a critical subsolution $\mathbf
u$ locally strict at $y$ with $\mathbf u(y)= \mathbf b$.
\end{proposition}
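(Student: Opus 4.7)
The plan is to prove the biconditional in two steps; nonemptiness of the interior will then be automatic from Proposition \ref{DZ1}.

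For the implication ``locally strict $\Rightarrow \mathbf{b}$ interior,'' I would perturb $\mathbf{u}$ by a smooth bump. Let $U$ and $\delta>0$ witness the strictness of $\mathbf{u}$ at $y$, pick $\phi\in C^{\infty}_{c}(U)$ with $\phi(y)=1$, and for $\boldsymbol{\eta}\in\R^{m}$ set $v_{i}:=u_{i}+\eta_{i}\phi$. Then $\mathbf{v}$ is Lipschitz, $\mathbf{v}(y)=\mathbf{b}+\boldsymbol{\eta}$, and $\mathbf{v}=\mathbf{u}$ outside $U$. Inside $U$ the Lipschitz bound of Proposition \ref{lip} confines $Du_{i}$ to a fixed compact ball, so by uniform continuity of each $H_{i}$ on this ball together with boundedness of $A$, for $|\boldsymbol{\eta}|$ small enough one has both $|H_{i}(x,Du_{i}+\eta_{i}D\phi)-H_{i}(x,Du_{i})|\le\delta/2$ and $|\phi\sum_{j}a_{ij}\eta_{j}|\le\delta/2$ a.e.\ in $U$. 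Adding the strict inequality satisfied by $\mathbf{u}$ produces
\[
H_{i}(x,Dv_{i})+\sum_{j}a_{ij}v_{j}\le\beta-\delta+\tfrac{\delta}{2}+\tfrac{\delta}{2}=\beta \quad \text{a.e.\ in } U,
\]
and trivially $\le\beta$ elsewhere, so $\mathbf{v}$ is a critical subsolution. Since $\boldsymbol{\eta}$ ranges freely over a neighborhood of the origin, a whole ball around $\mathbf{b}$ is contained in $F_{\beta}(y)$.

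For the converse I would combine subsolutions convexly. By Proposition \ref{DZ1}, $y\notin\mathcal{A}$ furnishes a critical subsolution $\mathbf{w}$, locally strict at $y$ with slack $\delta$ in some neighborhood $U$; set $\mathbf{b}^{*}=\mathbf{w}(y)$. Since $\mathbf{b}$ is interior to $F_{\beta}(y)$, the point $\mathbf{b}':=\mathbf{b}+\mu(\mathbf{b}-\mathbf{b}^{*})$ lies in $F_{\beta}(y)$ for all sufficiently small $\mu>0$, and with $\lambda=\mu/(1+\mu)\in(0,1)$ we get $\mathbf{b}=(1-\lambda)\mathbf{b}'+\lambda\mathbf{b}^{*}$. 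Pick a subsolution $\mathbf{u}'$ with $\mathbf{u}'(y)=\mathbf{b}'$ and set $\mathbf{u}:=(1-\lambda)\mathbf{u}'+\lambda\mathbf{w}$. Convexity of each $H_{i}$ in the momentum and the linearity of the coupling term yield, a.e.\ in $U$,
\[
H_{i}(x,Du_{i})+\sum_{j}a_{ij}u_{j}\le(1-\lambda)\beta+\lambda(\beta-\delta)=\beta-\lambda\delta,
\]
and $\le\beta$ elsewhere. Thus $\mathbf{u}$ is a critical subsolution with $\mathbf{u}(y)=\mathbf{b}$, locally strict at $y$ with constant $\lambda\delta$.

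Nonemptiness of the interior is then a free consequence: the locally strict subsolution at $y$ provided by Proposition \ref{DZ1} has, by the first implication, an interior value at $y$. The only delicate step is the convex-combination argument of the converse, where one must simultaneously check that the $p$-convexity of the Hamiltonians lets strictness be transferred from a single summand and that interiority of $\mathbf{b}$ allows the ``push past'' giving $\mathbf{b}'\in F_{\beta}(y)$ on the far side of $\mathbf{b}$ from $\mathbf{b}^{*}$; neither step requires the probabilistic or cycle-theoretic machinery developed elsewhere in the paper, so this proposition is essentially a PDE/convex-analysis companion to Proposition \ref{DZ1}.
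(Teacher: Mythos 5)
Your proof is correct, and it takes a genuinely different route from the paper's. The paper introduces the set $\widetilde F_\beta(y)$ of values attained at $y$ by locally strict critical subsolutions, then proves this set is \emph{open} (via a $\max\{\phi,u_i\}$ construction that perturbs one component at a time and checks the perturbed function is \emph{still locally strict}), proves it is \emph{dense} in $F_\beta(y)$ (a convex-combination limit argument), and finally invokes the topological fact that an open convex dense subset of a convex set equals its interior. You instead prove the biconditional head-on. Your forward direction perturbs all components at once with a smooth bump and only needs the perturbed map to be a subsolution, not to remain locally strict, which is a weaker target and slightly streamlines the estimate; the paper's max-construction has the side benefit of showing $\widetilde F_\beta(y)$ is open in its own right, which is what feeds its topological closing step. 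Your converse is the genuinely distinct ingredient: the ``push past'' device $\mathbf b' = \mathbf b + \mu(\mathbf b - \mathbf b^*)$, enabled precisely by interiority, followed by the convex combination $\mathbf u = (1-\lambda)\mathbf u' + \lambda \mathbf w$ with $\lambda = \mu/(1+\mu)$, gives the locally strict subsolution through $\mathbf b$ directly and replaces the paper's density-plus-topology step with an explicit construction. Both arguments are sound; yours is somewhat more elementary and self-contained, while the paper's isolates the openness of $\widetilde F_\beta(y)$ as a structural statement.
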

\begin{proof}
The values $\mathbf b $ corresponding to critical subsolutions
locally strict at $y$  make up a nonempty set in force of
Proposition \ref{DZ1}, it is in addition convex by the convex
character of the system. We will  denote it by $\widetilde
F_\beta(y)$.

Let $\mathbf b \in \widetilde F_\beta(y)$,  we claim  that  there
exists $\nu_0
>0$ with
\begin{equation}\label{notte1}
  \mathbf{b}+\nu \, \mathbf e_{i} \in \widetilde F_{\beta}(y)\, \qquad\mbox{ for  any } i, \, \nu_0 >  \nu >0.
\end{equation}
We denote by $\mathbf u$ the locally strict critical subsolution
with $\mathbf u(y)= \mathbf b$, then there exists $0<\eps < 1$ and
$\delta > 0$ such that
\begin{equation} \label{e51}
H_{i}(x,Du_{i}(x))+\sum_{j=1}^{m}a_{ij}u_{j}(x)\leq \beta - 2 \,
\delta \quad \mbox{ for any $i$, a.e. }x \in B(y,\eps).
\end{equation}
We  fix $i$ and  assume
\begin{equation} \label{e49}
\eta(\epsilon)+a_{ii}\,\frac {\epsilon^{2}}{2} < \delta
\qquad\hbox{for any $i$,}
\end{equation}
where $\eta$ is a continuity modulus for $(x,p) \mapsto H_i(x,p)$ in
$\mathbb T^N \times B(0,\ell_\beta+1)$  and $\ell_\beta$ is a
Lipschitz constant for all critical subsolutions, see Proposition
\ref{lip}.

We define $\mathbf w: \mathbb T^N \to \R^M$ via
\[w_{j}(x) = \left \{ \begin{array}{ll}
           u_{j}(x) & \mbox{if $ j\neq i$} \\
            \max \{ \phi(x),u_{i}(x)\} & \mbox{if $j=i$} \\
          \end{array} \right .\]
where
$$\phi(x):= u_{i}(x)- \displaystyle \frac{1}{2}\,|y-x|^{2}+
\displaystyle \frac{\epsilon^2}{2} $$
   Notice that
\begin{equation}\label{ecu}
 w_i= \phi > u_i \quad\hbox{in } \; B(y,\eps) \qquad\hbox{and } \quad
 w_i=u_i \quad\hbox{outside} \;
B(y,\eps).
\end{equation}

By \eqref{e51}, \eqref{e49} and the
 assumptions on the coupling matrix, we have for any $i$ and  a.e.  $x \in B(y,\eps)$

\begin{eqnarray*}
 && H_i(x,Dw_i(x)) + \sum_{j} a_{ij} \, w_j(x)  \\
&=& H_i(x,Du_i(x) +
  (y-x)) + \sum_{j \neq i} a_{ij} \, u_j(x) + a_{ii} \, \phi(x) \\
   &\leq& H_i(x,Du_i(x))+ \eta( \eps)  +  \sum_j a_{ij} \, u_j(x) + a_{ii} \, \frac{\eps^2 }2 \\
   &\leq& \beta -  2 \, \delta +  \delta  = \beta - \delta
\end{eqnarray*}
Further, for $j\neq i$ and for a.e. $x \in B(y,\epsilon)$, we have
\begin{eqnarray*}
   && H_{j}(x,Dw_{j}(x))+\sum_k a_{jk}w_{k}(x) \\
   &=& H_{j}(x,Du_{j}(x))+\sum_{k } a_{jk}u_{j}(x)+a_{ik}\left(-\displaystyle
   \frac{1}{2}\,|y-x|^{2}+ \displaystyle \frac{\epsilon^2}{2}\right) \\
   & \leq &  \beta - 2 \, \delta ,
\end{eqnarray*}
where the last inequality is due to the fact that $a_{ki} \leq 0$.
The previous computations  and \eqref{ecu} show that $\mathbf w$ is
a critical subsolution locally strict at $y$, and this property is
inherited by
\[\lambda \, \mathbf w + (1 - \lambda) \, \mathbf u\]
for any $\lambda \in [0,1]$. We therefore prove \eqref{notte1}
setting $\nu_0 = \frac {\eps^2}2$.

Taking into account that $\mathbf b + \lambda \, \mathbf 1 \in
\widetilde F_\beta(y)$ for any $\lambda \in \R$ and that the vectors
$\mathbf e_i$, $i=1, \cdots, m$, and  $- \mathbf 1$ are affinely
independent, we derive from  \eqref{notte1} and $\widetilde
F_\beta(y)$ being  convex, that $\mathbf b$ is an internal point of
$\widetilde F_\beta(y)$ and consequently that  $\widetilde
F_\beta(y)$  is an open set. Finally it is also dense in
$F_\beta(y)$ because if $\mathbf v$ is any critical subsolution and
$\mathbf u$ is in addition locally strict at $y$ then any convex
combination of $\mathbf u$ and $\mathbf v$ is locally strict and
\[ \lambda \, \mathbf u(y) + (1 - \lambda) \, \mathbf v(y) \to  \mathbf
v (y) \qquad\hbox{as $\lambda \to 0$.}\] The property of being open,
convex and dense in $F_\beta(y)$ implies that $\widetilde
F_\beta(y)$ must coincide with the interior of $ F_\beta(y)$, as
claimed.

$\hfill{\Box}$
\end{proof}
 \medskip

\medskip

We  proceed introducing the action functional defined in
\cite{siconolfi3} on which  our analysis is based, see Appendices
\ref{cadpath}, \ref{randomset} for terminology, notation,
definitions and basic facts.

 \medskip

Given $\alpha \geq \beta$ and an initial point $x \in
\mathbb{T}^{N}$, the  action functional adapted to the system is
$$\mathbb E_{\mathbf a} \left [ \int_0^\tau
 L_{\omega(s)}(x
+ \mathcal {I}(\Xi)(s),-\Xi(s)) + \alpha \, ds \right ],$$
where $\mathbf a$ is any probability vector of $\R^m$, $\tau$ a bounded stopping time and $\Xi$ a control.

 \medskip
 Using the action functional, we get the following characterizations of subsolutions to the
 system and admissible values:
\begin{theorem}\label{t17}
A function $\mathbf u:\mathbb{T}^{N} \rightarrow \mathbb{R}^{m}$ is
a subsolution of (\ref{e1}), for any $\alpha \geq \beta$, if and
only if
$$\mathbb{E}_{\mathbf {a}} \big [u_{\omega(0)}(x) - u_{\omega(\tau)}(y) \big ]
   \leq  \mathbb{E}_{\mathbf {a}}  \left [
\int_0^\tau L_{\omega(s)}(x + \mathcal {I}(\Xi)(s),-\Xi(s)) + \alpha
\, ds \right],$$ for any pair of points $x$, $y$  in
$\mathbb{T}^{N}$, probability vector  $\mathbf {a}  \in \R^m$, any
bounded
stopping time $\tau$ and $\Xi \in \mathcal{K}(\tau, y-x)$.\\

\end{theorem}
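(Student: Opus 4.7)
The plan is to prove both directions by pairing a Dynkin-type identity for the process $s \mapsto u_{\omega(s)}(x+\mathcal{I}(\Xi)(s))$ with Fenchel's inequality $p\cdot q \leq H_i(x,p) + L_i(x,q)$. This is the vectorial analogue of the classical scalar fact that $H(x,Du)\leq\alpha$ a.e.\ is equivalent to $u(x)-u(y) \leq \int_0^t L(\gamma,-\dot\gamma)+\alpha\, ds$ along absolutely continuous arcs joining $x$ to $y$. The crucial identity I want to establish, for any Lipschitz $\mathbf u$, is
\begin{equation*}
\mathbb{E}_{\mathbf{a}}\big[u_{\omega(\tau)}(y) - u_{\omega(0)}(x)\big] = \mathbb{E}_{\mathbf{a}}\Big[\int_0^\tau Du_{\omega(s)}(x+\mathcal{I}(\Xi)(s))\cdot\Xi(s) - \sum_{j=1}^m a_{\omega(s)j}\, u_j(x+\mathcal{I}(\Xi)(s))\, ds\Big],
\end{equation*}
which I would derive pathwise: since $\tau$ is bounded, the càdlàg path $\omega$ has finitely many jumps on $[0,\tau]$ a.s.; between consecutive jump times $\omega(s)\equiv i$ is constant and $s\mapsto u_i(x+\mathcal{I}(\Xi)(s))$ is absolutely continuous with derivative $Du_i(x+\mathcal{I}(\Xi)(s))\cdot\Xi(s)$, producing the first summand, while the jump part has compensator $-\sum_j a_{\omega(s)j}u_j(x+\mathcal{I}(\Xi)(s))\, ds$ by the standard formula for Markov chains with generator $-A$, using the algebraic identity $\sum_{j\neq i}(-a_{ij})(u_j - u_i) = -\sum_j a_{ij}u_j$ coming from hypothesis~(A2).

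For the ``only if'' direction, assume $\mathbf u$ is a subsolution of \eqref{e1}; Proposition~\ref{lip} guarantees it is Lipschitz and hence differentiable almost everywhere. Combining the a.e.\ subsolution inequality $H_{\omega(s)}(x+\mathcal{I}(\Xi)(s),Du_{\omega(s)}) + \sum_j a_{\omega(s)j}u_j \leq \alpha$ with Fenchel's inequality applied to $p=Du_{\omega(s)}$, $q=-\Xi(s)$ gives, almost surely for a.e.\ $s$, the pointwise bound
\[
-Du_{\omega(s)}(x+\mathcal{I}(\Xi)(s))\cdot\Xi(s) + \sum_j a_{\omega(s)j}u_j(x+\mathcal{I}(\Xi)(s)) \leq L_{\omega(s)}(x+\mathcal{I}(\Xi)(s),-\Xi(s)) + \alpha.
\]
Integrating on $[0,\tau]$, taking $\mathbb{E}_{\mathbf{a}}$, and substituting into the identity above yields exactly the claimed estimate.

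For the converse, fix a point $x_0$ at which every $u_j$ is differentiable, an index $i$, and an arbitrary $q\in\R^N$. I would test the hypothesis with $\mathbf a=\mathbf e_i$, constant control $\Xi\equiv -q$, endpoint $y=x_0+\tau q$, and a small deterministic $\tau>0$. Using the identity, the left-hand side divided by $\tau$ converges as $\tau\to 0^+$ to $Du_i(x_0)\cdot q + \sum_j a_{ij}u_j(x_0)$ (the position contribution from $\mathcal{I}(\Xi)(s)=-sq$ plus the generator $-A$ applied to $\mathbf u(x_0)$), while the right-hand side divided by $\tau$ converges to $L_i(x_0,q)+\alpha$ by continuity of $L_i$ and dominated convergence. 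Taking the supremum over $q$ and invoking Fenchel duality $\sup_q\{p\cdot q - L_i(x_0,q)\}=H_i(x_0,p)$ produces $H_i(x_0,Du_i(x_0))+\sum_j a_{ij}u_j(x_0)\leq\alpha$, i.e.\ the a.e., and hence viscosity, subsolution property.

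The main obstacle is the Dynkin identity itself for merely Lipschitz $\mathbf u$, since the classical Itô formula needs $C^2$ regularity. I would handle this either through the pathwise jump/absolutely continuous decomposition above, carefully interchanging expectation and pathwise integration via Fubini and the equi-boundedness of $Du_{\omega(s)}\cdot\Xi(s)$, or alternatively by mollifying each $u_j$, proving the identity for the smooth approximants, and passing to the limit with the uniform Lipschitz bound. Once this identity is secured, the differentiation at $\tau=0$ in the sufficiency step is routine thanks to the smooth dependence of $e^{-A\tau}$ on $\tau$.
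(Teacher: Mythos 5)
The paper itself does not prove Theorem \ref{t17}; it imports it from \cite{siconolfi3}, so there is no in-text argument to compare against. Your proposal nonetheless reconstructs what is essentially the standard route used in that reference, and it is sound in outline: a Dynkin-type formula for $s\mapsto u_{\omega(s)}(x+\mathcal I(\Xi)(s))$ built from the generator $-A$ of the chain and the absolutely continuous drift, Fenchel's inequality in the ``only if'' direction, and differentiation at $\tau=0^+$ with deterministic data in the converse.

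Two points merit attention. First, a sign slip: with $\Xi\equiv -q$ one has $\mathcal I(\Xi)(\tau)=-\tau q$, so the compatible endpoint is $y=x_0-\tau q$, not $x_0+\tau q$ (the Lagrangian argument is then $-\Xi\equiv q$, and the rest of your limit computation, which you carried out as if the endpoint were $x_0-\tau q$, is correct). Second, and more substantively, the regularity issue you flag is the real crux of the ``only if'' direction, and the pathwise route is delicate: for a merely Lipschitz $u_i$ the path $s\mapsto x+\mathcal I(\Xi)(s)$ may spend positive time in the null set where $u_i$ fails to be differentiable, so the pointwise chain rule cannot be invoked directly. The mollification alternative you mention is the right fix, but it should be made precise that the mollified $\mathbf u^\eps$ is an \emph{approximate} subsolution: by convexity of $H_i$ in $p$ and Jensen's inequality, $H_i(x,Du_i^\eps(x))\leq (H_i(\cdot,Du_i)*\rho_\eps)(x)+\eta(\eps)$, while $\sum_j a_{ij}u_j^\eps\to\sum_j a_{ij}u_j$ uniformly, so $\mathbf u^\eps$ is a subsolution of the level-$(\alpha+o(1))$ system. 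The Dynkin identity for the smooth $\mathbf u^\eps$ is then unproblematic and one passes to the limit using the equi-Lipschitz bound and boundedness of $\Xi$. For the converse, note you do not actually need the Dynkin identity: with deterministic $\tau$ and constant control the left side is $u_i(x_0)-\sum_j(e^{-A\tau})_{ij}u_j(x_0-\tau q)$ in closed form, and the one-sided derivative at $\tau=0$ follows from $\frac{d}{d\tau}\big|_0 e^{-A\tau}=-A$ together with differentiability of $u_i$ at the chosen Rademacher point; this is slightly cleaner than appealing to the identity again.
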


\medskip

\begin{theorem}\label{t18}
Given $y \in \mathbb{T}^{N}$,  $\alpha \geq \beta$,  $\mathbf{b} \in
\mathcal{F}_{\alpha}(y) $ if and only if
\begin{equation}\label{e41}
 \mathbb{E}_{i} \left  [\int_0^\tau L_{\omega(s)}(y + \mathcal I(\Xi)(s),-\Xi(s)) + \alpha \, ds - b_{i}
+ b_{\omega(\tau)} \right ] \geq 0,
\end{equation}
for any $i \in \{1, \cdots, m\}$, bounded  stopping times $\tau$ and $\tau$--cycles $\Xi$.
\end{theorem}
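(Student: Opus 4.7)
The statement is an equivalence, and the forward implication follows directly from Theorem~\ref{t17}. Let $\mathbf{u}$ be a subsolution of \eqref{e1} with $\mathbf{u}(y)=\mathbf{b}$. Apply Theorem~\ref{t17} with $x=y$, initial distribution $\mathbf{a}=\mathbf{e}_i$, and a $\tau$--cycle $\Xi\in\mathcal{K}(\tau,0)$. The cycle condition means $\mathcal{I}(\Xi)(\tau)=0$, so the endpoint is $y+\mathcal{I}(\Xi)(\tau)=y$; exploiting $u_j(y)=b_j$ for every $j$, we obtain $u_{\omega(\tau)}(y)=b_{\omega(\tau)}$ almost surely, and a rearrangement of the inequality in Theorem~\ref{t17} yields \eqref{e41}.

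For the converse, I would build a subsolution through a value function construction. Define
$$v_i(x)=\inf\,\mathbb{E}_i\!\left[\int_0^{\tau}L_{\omega(s)}(y+\mathcal{I}(\Xi)(s),-\Xi(s))+\alpha\,ds+b_{\omega(\tau)}\right],$$
the infimum ranging over bounded stopping times $\tau$ and controls $\Xi\in\mathcal{K}(\tau,x-y)$. The plan is then to verify (i) $v_i(y)=b_i$ for every $i$, and (ii) $\mathbf{v}$ is a subsolution of \eqref{e1}; together these give $\mathbf{b}=\mathbf{v}(y)\in \mathcal{F}_\alpha(y)$ and we are done. Property (i) is immediate: $v_i(y)\le b_i$ comes from the trivial choice $\tau\equiv 0$, while $v_i(y)\ge b_i$ is exactly what the cycle hypothesis \eqref{e41} asserts. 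Property (ii) amounts to checking the characterization of Theorem~\ref{t17}: for arbitrary $x,x'\in\mathbb{T}^N$, probability vector $\mathbf{a}$, bounded stopping time $\tau'$, and $\Xi'\in\mathcal{K}(\tau',x'-x)$,
$$\mathbb{E}_{\mathbf{a}}\bigl[v_{\omega(0)}(x)-v_{\omega(\tau')}(x')\bigr]\le \mathbb{E}_{\mathbf{a}}\!\left[\int_0^{\tau'}L_{\omega(s)}(x+\mathcal{I}(\Xi')(s),-\Xi'(s))+\alpha\,ds\right].$$

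The strategy for (ii) is dynamic programming: given $\eps>0$, measurably select for each state $j$ an $\eps$-near-optimal pair $(\tau''_j,\Xi''_j)$ realizing $v_j(x')$, concatenate $\Xi'$ with $\Xi''_{\omega(\tau')}$ using the shift flow apparatus of Section~\ref{propstoptim}, and observe that the resulting control lies in $\mathcal{K}(\tau'+\tau'',x-y)$. Splitting the expectation at $\tau'$ via the strong Markov property of the underlying chain then produces the desired inequality up to $\eps$, after which one lets $\eps\to 0$. I expect the main obstacle to be the rigorous execution of this dynamic programming step: controls and stopping times are random objects, so both the concatenation and the conditioning at $\tau'$ must be carried out through the shift flow and stopping time machinery recalled in Appendix~\ref{randomset}. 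Secondary technicalities include ensuring that $v_i$ is finite (by comparison with any genuine subsolution of \eqref{e1}, whose existence is guaranteed since $\alpha\ge\beta$) and inherits equi-Lipschitz continuity through Proposition~\ref{lip}.
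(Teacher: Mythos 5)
The paper does not actually prove Theorem~\ref{t18}: it is quoted from \cite{siconolfi3} (as the Introduction says, ``\ldots the characterization of admissible values through the action functional computed on random cycles \emph{there} established''), so there is no in-paper argument to compare against. Your forward implication is fine and is the intended one: apply Theorem~\ref{t17} with $x=y$, $\mathbf{a}=\mathbf{e}_i$, $\Xi\in\mathcal K(\tau,0)$, use that $\mathbb P_i$ lives on $\mathcal D_i$ to get $u_{\omega(0)}(y)=b_i$, and rearrange.

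The converse, however, has a genuine direction error in the value function. You set the Lagrangian base at $y$ and take $\Xi\in\mathcal K(\tau,x-y)$, so the underlying spatial curve $s\mapsto y+\mathcal I(\Xi)(s)$ runs \emph{from $y$ to $x$}, while the index chain runs forward from $i$. With that choice the dynamic programming step you describe cannot be executed: an $\eps$--optimal competitor for $v_{\omega(\tau')}(x')$ is a curve from $y$ to $x'$, the datum $\Xi'\in\mathcal K(\tau',x'-x)$ gives a curve from $x$ to $x'$, and these two curves share the \emph{terminal} point $x'$ rather than matching end-to-start, so they do not concatenate into a competitor for $v_i(x)$. One cannot reverse either piece without reversing the Markov chain, which changes its law. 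The definition should mirror the form in Theorem~\ref{t17}: base the Lagrangian at $x$ and take $\Xi\in\mathcal K(\tau,y-x)$, i.e.
\[
v_i(x)=\inf_{\tau,\ \Xi\in\mathcal K(\tau,\,y-x)}\ \mathbb E_i\!\left[\int_0^\tau L_{\omega(s)}\bigl(x+\mathcal I(\Xi)(s),-\Xi(s)\bigr)+\alpha\,ds+b_{\omega(\tau)}\right].
\]
With this, the cycle hypothesis still gives $v_i(y)=b_i$ (take $x=y$, so $\Xi\in\mathcal K(\tau,0)$), and the concatenation is geometrically consistent: prepend $\Xi'$ (curve $x\to x'$, chain $i\to\omega(\tau')$) to an $\eps$--optimal competitor for $v_{\omega(\tau')}(x')$ (curve $x'\to y$, chain starting at $\omega(\tau')$) to obtain a curve $x\to y$ with chain starting at $i$, a valid competitor for $v_i(x)$. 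Beyond this fix, the concerns you flag --- measurable selection of near-optimal controls, splitting the expectation at $\tau'$ via the shift-flow machinery of Section~\ref{propstoptim} and Theorem~\ref{flowflow}, and finiteness/equi-Lipschitz bounds via Proposition~\ref{lip} --- are indeed the substantive technical content and would need to be carried out in full; the paper sidesteps all of this by citing \cite{siconolfi3}.
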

\section{Properties of stopping times}\label{propstoptim}

Given a stopping time $\tau$,  the push--forward  of
$\mathbb{P}_{\mathbf{a}}$ through $\omega(\tau)$ is a probability
measure on indices $\{1,\cdots,m\}$, which can be identified with an
element of the simplex (denoted by $\mathcal S$) of probability
vectors in $\R^m$. Then
$$\mathbf{a} \mapsto \omega(\tau) \# \mathbb{P}_{\mathbf{a}},$$
defines a map from  $\mathcal S$ to $\mathcal S$ which is, in
addition, linear. Hence, thanks to Proposition \ref{prop32}, it can
be represented by a stochastic matrix, which we denote by $e^{- A
\tau}$,  acting on the right, i.e.
\begin{equation} \label{e58}
\mathbf{a} \,  e^{- A \tau }  = \omega(\tau) \#
\mathbb{P}_\mathbf{a} \qquad \mbox{for any } \mathbf{a} \in
\mathcal{S}.
\end{equation}

\medskip
\begin{definition} \label{charactsc} We say that $\mathbf{a} \in
\mathcal S$ is a {\em characteristic vector} of $\tau$ if it is an
eigenvector of $e^{- A \tau}$  corresponding to the eigenvalue $1$, namely $\mathbf {a} = \mathbf {a} \, e^{- A\tau}$.
\end{definition}
\smallskip

\begin{rem}\label{now} According to Proposition \ref{prop33},  any stopping time possesses a
characteristic vector $\mathbf{a}$, and
\[ \mathbb E_\mathbf{a} b_{\omega(\tau)} = \mathbf{a} \, e^{- A \tau} \cdot \mathbf{b}= \mathbf{a} \cdot \mathbf{b} \qquad\hbox{for every $\mathbf{b} \in
\mathbb R^m$.}\]
\end{rem}

\smallskip
According to the remark above,  Theorem \ref{t17} takes a simpler
form if we just consider expectation operators $\mathbb E_{\mathbf
a}$ with $\mathbf a$ characteristic vector. This result will play a
key role in Lemma \ref{post21}.

\smallskip

\begin{corollary} \label{cornow}
A function $\mathbf u$ is a subsolution to \eqref{e1} if and only if
\begin{equation}\label{charstric}
 \mathbf a \cdot \big ( \mathbf u(x)- \mathbf u(y) \big )
\leq \mathbb E_{\mathbf a} \left ( \int_0^\tau L_{\omega(s)}(x +
\mathcal{I}(\Xi)(s),-\Xi(s)) + \beta  \, ds \right ),
\end{equation}
 for any $i \in
\{1, \cdots, m\}$, bounded  stopping times $\tau$, $\mathbf {a}$
characteristic vector of $\tau$,  and  $\Xi \in \mathcal K(\tau,
y-x)$.
\end{corollary}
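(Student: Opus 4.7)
The plan is to derive Corollary \ref{cornow} as a direct specialization of Theorem \ref{t17}, with Remark \ref{now} providing the crucial simplification that collapses the expectation $\mathbb{E}_{\mathbf{a}} u_{\omega(\tau)}(y)$ into the bilinear product $\mathbf{a} \cdot \mathbf{u}(y)$ whenever $\mathbf{a}$ is characteristic for $\tau$.

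For the $(\Rightarrow)$ direction, I assume $\mathbf{u}$ is a subsolution and apply Theorem \ref{t17} to the given data $(\tau, \mathbf{a}, \Xi)$, obtaining the inequality
\[
\mathbb{E}_{\mathbf{a}}\bigl[u_{\omega(0)}(x) - u_{\omega(\tau)}(y)\bigr] \leq \mathbb{E}_{\mathbf{a}}\Bigl[\int_0^\tau L_{\omega(s)}(x + \mathcal{I}(\Xi)(s), -\Xi(s)) + \beta\,ds\Bigr].
\]
The right-hand side already matches that of (\ref{charstric}). For the left-hand side, I use that $\omega(0)$ is distributed as $\mathbf{a}$ under $\mathbb{P}_{\mathbf{a}}$, so $\mathbb{E}_{\mathbf{a}} u_{\omega(0)}(x) = \mathbf{a} \cdot \mathbf{u}(x)$, while Remark \ref{now} applied with $\mathbf{b} = \mathbf{u}(y)$ gives $\mathbb{E}_{\mathbf{a}} u_{\omega(\tau)}(y) = \mathbf{a} \cdot \mathbf{u}(y)$, precisely because $\mathbf{a}$ is characteristic of $\tau$. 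Subtracting produces $\mathbf{a} \cdot (\mathbf{u}(x) - \mathbf{u}(y))$ on the left, yielding (\ref{charstric}).

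For the $(\Leftarrow)$ direction, I run the same two identities in reverse to convert (\ref{charstric}) into the Theorem \ref{t17} inequality evaluated at each $(\tau, \mathbf{a})$ with $\mathbf{a}$ a characteristic vector of $\tau$. To conclude that $\mathbf{u}$ is a subsolution, I then verify the a.e.\ PDE form of the subsolution inequality at Lebesgue points, by testing with short deterministic stopping times $\tau = t$ (whose common characteristic vector is the stationary distribution $\pi$ of $-A$) and letting $t \to 0^+$, complemented by first-jump-type stopping times whose characteristic vectors probe other directions in the simplex.

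The main obstacle lies in this reverse direction. For a single stopping time $\tau$ the set of characteristic vectors is generically one-dimensional in $\mathcal{S}$, whereas Theorem \ref{t17} requires the inequality for \emph{every} probability vector; bridging this gap requires varying $\tau$ and combining the resulting characteristic vectors, leveraging the irreducibility of $A$ together with Proposition \ref{prop33}, in order to recover enough test pairs to force the scalar PDE inequality $H_i(x, Du_i) + \sum_j a_{ij} u_j \leq \beta$ for each component $i$.
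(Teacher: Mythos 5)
Your forward direction is correct and is exactly the paper's intended (though unstated) argument: apply Theorem \ref{t17}, use that $\mathbb{P}_{\mathbf{a}}=\sum_i a_i\,\mathbb{P}_i$ with $\mathbb{P}_i$ supported in $\mathcal{D}_i$ to rewrite $\mathbb{E}_{\mathbf{a}}\,u_{\omega(0)}(x)=\mathbf{a}\cdot\mathbf{u}(x)$, and invoke Remark \ref{now} to rewrite $\mathbb{E}_{\mathbf{a}}\,u_{\omega(\tau)}(y)=\mathbf{a}\cdot\mathbf{u}(y)$ precisely because $\mathbf{a}$ is characteristic for $\tau$. That is all the paper needs: the only place Corollary \ref{cornow} is invoked (Lemma \ref{post21}) uses just this implication, with $x=y$.

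The converse direction in your write-up has a genuine gap, which you yourself flag but do not close. You correctly observe that for a fixed $\tau$ (say $\tau\gg\eps$) the characteristic vector is unique, so \eqref{charstric} supplies, per stopping time, only one inequality rather than the full family over $\mathcal{S}$ required by Theorem \ref{t17}. Your proposed remedy fails on two counts. First, ``first-jump-type'' stopping times are not bounded, and the hypothesis of the corollary is restricted to \emph{bounded} stopping times; you would have to truncate, and then argue that the truncation still produces the characteristic vectors you want. Second, and more seriously, as $\tau$ varies the characteristic vector $\mathbf{a}(\tau)$ and the left-hand side $\mathbf{a}(\tau)\cdot(\mathbf{u}(x)-\mathbf{u}(y))$ move \emph{together}: you cannot decouple them to conclude the a.e.\ inequality $H_i(x,Du_i)+\sum_j a_{ij}u_j\leq\beta$ for each separate index $i$. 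For deterministic $\tau=t>0$ the characteristic vector is always the stationary distribution $\pi$, so short-time deterministic tests only control the single scalar combination $\pi\cdot\mathbf{u}$; the additional stopping times that are supposed to ``probe other directions'' are never exhibited, and no argument is given that the resulting $\mathbf{a}(\tau)$'s together with the accompanying expectations over $\mathbb{P}_{\mathbf{a}(\tau)}$ force the component-wise PDE bound. As written, the reverse implication is not established.
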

\smallskip

\begin{lemma}\label{media}
Take  $\tau_n$ as in \eqref{splstopping}.  Then
\[ e^{-A \tau_n} \to e^{-A \tau} \qquad\hbox{as $n$ goes to
infinity.}\]
\end{lemma}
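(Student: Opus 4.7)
The plan is to reduce matrix convergence to convergence of expectations. Since the entries of the stochastic matrix $e^{-A\tau}$ are determined by the relation $\mathbf{a}\,e^{-A\tau}\cdot\mathbf{b}=\mathbb{E}_\mathbf{a}[b_{\omega(\tau)}]$ for arbitrary $\mathbf{a}\in\mathcal{S}$ and $\mathbf{b}\in\R^m$ (specializing to standard basis vectors $\mathbf{e}_i,\mathbf{e}_j$ recovers the $(i,j)$-entry as $\mathbb{P}_i(\omega(\tau)=j)$), it is enough to prove that
\[
\mathbb{E}_\mathbf{a}\bigl[b_{\omega(\tau_n)}\bigr]\longrightarrow \mathbb{E}_\mathbf{a}\bigl[b_{\omega(\tau)}\bigr]
\]
for every $\mathbf{a}\in\mathcal{S}$ and $\mathbf{b}\in\R^m$. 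Since we are in finite dimension, entrywise convergence will be equivalent to the desired matrix convergence.

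The starting ingredient is the structure of the approximating sequence $\tau_n$ from \eqref{splstopping}: by construction $\tau_n\downarrow\tau$ pointwise on the path space (the $\tau_n$ are simple stopping times obtained by rounding $\tau$ upward on a refining dyadic grid). Combined with the c\`{a}dl\`{a}g nature of the sample paths $\omega$, recalled in Appendix \ref{cadpath}, this yields
\[
\omega(\tau_n)\;\longrightarrow\;\omega(\tau+)\;=\;\omega(\tau)
\]
pointwise, the second equality being right-continuity of $\omega$. Because $\omega$ takes values in the discrete set $\{1,\dots,m\}$, convergence of $\omega(\tau_n)$ to $\omega(\tau)$ actually means $\omega(\tau_n)=\omega(\tau)$ for all $n$ large enough (depending on the path).

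Consequently $b_{\omega(\tau_n)}\to b_{\omega(\tau)}$ pointwise, and the sequence is uniformly bounded by $\max_j|b_j|$. The bounded convergence theorem then gives the required convergence of expectations, whence
\[
\mathbf{a}\,e^{-A\tau_n}\cdot\mathbf{b}\;\longrightarrow\;\mathbf{a}\,e^{-A\tau}\cdot\mathbf{b}
\qquad\text{for all }\mathbf{a}\in\mathcal{S},\ \mathbf{b}\in\R^m,
\]
which is the claim. The only delicate point I anticipate is the bookkeeping connecting $\tau_n$ to the actual form specified in \eqref{splstopping} and making sure that the monotone approximation really is from above, so that right-continuity (rather than the merely existing left limits) can be invoked; everything else reduces to bounded convergence on a finite state space.
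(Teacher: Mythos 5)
Your proof follows essentially the same route as the paper's: fix $\mathbf{a}\in\mathcal S$ and $\mathbf{b}\in\R^m$, use right-continuity of $\omega$ together with $\tau_n\geq\tau$ and $\tau_n\to\tau$ to get $\omega(\tau_n)\to\omega(\tau)$ pointwise, and then pass to expectations to obtain $(\mathbf{a}\,e^{-A\tau_n})\cdot\mathbf{b}\to(\mathbf{a}\,e^{-A\tau})\cdot\mathbf{b}$. You are a bit more explicit than the paper in invoking bounded convergence (which the paper leaves tacit), but the argument is the same.
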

\begin{proof}
Let $\mathbf{a} \in \mathcal S$, $\mathbf{b} \in \mathbb{R}^{m}$.
Being $\omega$ right-continuous and $\tau_n \geq \tau$, we get
$\omega(\tau_n) \to \omega(\tau)$ for any $\omega \in \mathcal D$,
and consequently
\[ b_{\omega(\tau_n)} \to b_{\omega(\tau)}.\]
This implies, taking into account \eqref{e58}
\[ (\mathbf{a} \, e^{- A \tau_n}) \cdot \mathbf{b} =  \mathbb E_\mathbf{a} b_{\omega(\tau_n)} \to \mathbb E_\mathbf{a}
b_{\omega(\tau)}= (\mathbf{a} \, e^{- A \tau}) \cdot \mathbf{b},\]
and yields the assertion.
 $\hfill{\Box}$
\end{proof}
\begin{definition}\label{superstrict} Given any positive constant $\epsilon$, we say that $\tau$ is strongly greater
than $\epsilon$, written mathematically as  $\tau \gg \epsilon$, to
mean that $\tau - \epsilon$ is still a stopping time, or
equivalently
\begin{equation}\label{greatergreater}
\tau \geq \epsilon \;\hbox{a.s.} \quad\hbox{and} \quad  \{ \tau \leq t\}
\in \mathcal{F}_{t- \epsilon } \quad\hbox{for any $t \geq \epsilon$.}
\end{equation}
Moreover for $i \in \{1, \cdots, m\}$, we say
$$\tau \gg \epsilon  \;\hbox{in $\mathcal D_i$}$$ to mean
\begin{equation}\label{greatergreater1}
\tau \geq \epsilon \;\hbox{a.s. in $\mathcal D_i$} \quad\hbox{and} \quad  \{
\tau \leq t\} \cap \mathcal D_i \in \mathcal F_{t- \epsilon } \quad\hbox{for any $t \geq
\epsilon$.}
\end{equation}
\end{definition}
\medskip

\begin{proposition}\label{corollozzo} Let $\epsilon >0$, $i \in \{1, \cdots, m\} $.
Then for every  $\tau \gg \epsilon$ in $\mathcal D_i$, there exists a positive constant
$\rho$, solely depending on $\epsilon$ and on the coupling
matrix, such that
\begin{equation}\label{lozzo1}
  \left ( e^{- A \tau} \right )_{ij}  > \rho \qquad j \in \{1, \cdots,
  m\}.
\end{equation}
\end{proposition}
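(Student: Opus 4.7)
The plan is to reduce \eqref{lozzo1} to the strict positivity of $e^{-A\epsilon}$, which is guaranteed by the irreducibility assumption (A3) on the coupling matrix and is recorded in Appendix \ref{stocmatrx}.

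First I would translate the problem into a statement about the Markov chain. Choosing $\mathbf a=\mathbf e_i$ in \eqref{e58}, the entry of interest reads
\[
(e^{-A\tau})_{ij}=\mathbf e_i\,e^{-A\tau}\cdot\mathbf e_j=\mathbb P_i\bigl(\omega(\tau)=j\bigr).
\]
Since under $\mathbb P_i$ the path starts at $i$, we have $\mathbb P_i(\mathcal D_i)=1$, so the hypothesis $\tau\gg\epsilon$ in $\mathcal D_i$ tells us, modulo a $\mathbb P_i$--null set, that $\tau':=\tau-\epsilon$ is itself a bounded stopping time and that $\tau=\tau'+\epsilon$.

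Next I would split the interval $[0,\tau]$ at $\tau'$ and apply the strong Markov property of the underlying chain, whose transition semigroup is precisely $(e^{-At})_{t\geq 0}$. Conditioning on $\mathcal F_{\tau'}$, the law of $\omega(\tau'+\epsilon)$ is that of the chain started at the (random but $\mathcal F_{\tau'}$--measurable) state $\omega(\tau')$ and run for the deterministic time $\epsilon$, so that
\[
\mathbb P_i\bigl(\omega(\tau)=j\bigm|\mathcal F_{\tau'}\bigr)=\bigl(e^{-A\epsilon}\bigr)_{\omega(\tau'),\,j}\qquad\mathbb P_i\text{-a.s.}
\]
Taking expectations yields
\[
(e^{-A\tau})_{ij}=\mathbb E_i\Bigl[(e^{-A\epsilon})_{\omega(\tau'),\,j}\Bigr].
\]

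Finally I would invoke the strict positivity of $e^{-A\epsilon}$: setting $\rho_0:=\min_{k,l}(e^{-A\epsilon})_{kl}>0$, a quantity depending only on $A$ and $\epsilon$, the identity above gives $(e^{-A\tau})_{ij}\geq\rho_0$, and any $\rho\in(0,\rho_0)$ delivers \eqref{lozzo1}. The delicate point I anticipate is the strong Markov step, since $\tau'$ is only guaranteed to be a stopping time on the event $\mathcal D_i$ and the random frame of Appendix \ref{randomset} is constructed from scratch; one has to verify that, within that explicit setup, the conditional law of the post--$\tau'$ trajectory under $\mathbb P_i$ is indeed $\mathbb P_{\omega(\tau')}$, so that the above kernel identity makes sense. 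Once this is established, the remainder of the argument is a one--line averaging and the quantitative dependence of $\rho$ on $(A,\epsilon)$ is immediate.
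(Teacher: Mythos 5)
Your proposal is correct and lands on exactly the same algebraic identity the paper uses, namely that the $i$--th row of $e^{-A\tau}$ is $\mathbf{b}\,e^{-A\epsilon}$ for the probability vector $\mathbf{b}=\omega(\tau-\epsilon)\#\mathbb P_i$; your averaging formula $(e^{-A\tau})_{ij}=\mathbb E_i[(e^{-A\epsilon})_{\omega(\tau-\epsilon),\,j}]$ is precisely this, written entrywise. The genuine difference is how that identity is obtained. You invoke the strong Markov property as a black box and then flag the justification of it, in the explicitly constructed random frame of Appendix~\ref{randomset}, as the delicate point. The paper deliberately avoids the conditional-expectation machinery: it replaces $\tau$ by the nonincreasing simple approximants $\tau_n$ of \eqref{splstopping}, observes that $\tau\gg\epsilon$ in $\mathcal D_i$ forces $F_j=\{\tau\in[(j-1)/2^n,j/2^n)\}\cap\mathcal D_i\in\mathcal F_{j/2^n-\epsilon}$, and then applies the elementary restricted Markov identity of Lemma~\ref{lem4} term by term to get $\mathbf e_i e^{-A\tau_n}=\big(\omega(\tau_n-\epsilon)\#\mathbb P_i\big)e^{-A\epsilon}$ before passing to the limit via Lemma~\ref{media}. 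So the paper's argument is in effect a bare-hands proof of the strong Markov consequence you need, done so as to keep the text self-contained in line with the authors' stated aim; your version is shorter but transfers the real work into the unproved strong Markov step. If you wanted to close that gap inside this framework, the simple-stopping-time approximation together with Lemma~\ref{lem4} is exactly the tool. Everything else in your write-up (the reduction $(e^{-A\tau})_{ij}=\mathbb P_i(\omega(\tau)=j)$, the choice $\rho_0=\min_{k,l}(e^{-A\epsilon})_{kl}>0$ by Proposition~\ref{prop35}, and the final averaging bound) matches the paper.
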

\begin{proof} We  approximate $\tau$ by a sequence of simple stopping
times $\tau_n$ with $\tau_n   \geq \tau$, as indicated in
Proposition \ref{stoppapproxi}. For a fixed  $n$, we then have
$$\tau_n=  \sum_j \frac j{2^n} \, \mathbb{I}(\{\tau \in [(j-1)/2^n,j/2^n)\}).$$
By  the assumption on $\tau$, the set $F_j := \{\tau \in
[(j-1)/2^n,j/2^n)\} \cap \mathcal D_i$ belongs to $\mathcal F_{
\frac j{2^n}  - \epsilon}$.  By applying  Lemma \ref{lem4}, we
therefore get
\begin{eqnarray*}
  \mathbf{e}_i \, e^{-A \tau_n} &=& \omega(\tau_n) \# \mathbb{P}_i=
\sum_j \, \omega( j/2^n)\# (\mathbb{P}_i \mres F_j) = \left ( \sum_j
\omega(j/2^n - \epsilon )\# (\mathbb{P}_i \mres F_j) \right ) \,
e^{-A \epsilon}
  \\ &=& \big ( \omega (\tau_n - \epsilon )\# \mathbb{P}_i \big )\, e^{-A \epsilon}
\end{eqnarray*}
Owing to $\omega (\tau_n - \epsilon )\#\mathbb{P}_i \in \mathcal S$, we deduce
\[ \mathbf{e}_i \,  e^{- A \tau_n} \in \{ \mathbf{b} \, e^{-A \epsilon} \mid \mathbf{b} \in \mathcal S\}, \]
 we have in addition $   e^{- A \tau_n} \to e^{- A \tau}$  by Lemma \ref{media},
and consequently
\[ \mathbf{e}_i \,  e^{- A \tau} \in \{ \mathbf{b} \, e^{-A \epsilon} \mid \mathbf{b} \in \mathcal S\}. \]
This set is compact, and contained in the relative interior of
$\mathcal S$ because  $e^{-A \epsilon}$ is positive by Proposition
\ref{prop35}. Since the components of $\mathbf{e}_i \,  e^{- A
\tau}$ make up the $i$--th  row of $e^{- A \tau}$, we immediately
derive the assertion.
 $\hfill{\Box}$
\end{proof}
\smallskip

According to the previous proposition and Proposition \ref{prop34},
the the characteristic vector of a $\tau \gg \epsilon$, for some
$\epsilon >0$,   is unique and positive.

\smallskip

 \begin{rem}\label{corollozzobis} Take  $\tau \gg \eps$ and denote by $\rho$
-the positive constant satisfying \eqref{lozzo1} for any $i$, $j$,
according to Proposition \ref{corollozzo}. Then, since $e^{- A
\tau}$ is a stochastic matrix, we have
\[ \left ( e^{- A \tau} \right )_{ij}  = 1 - \sum_{k \neq j}
\left ( e^{- A \tau} \right )_{ik} \leq 1 - (m-1) \rho \leq 1 -
\rho.\]
\end{rem}

\smallskip

\begin{rem}\label{min}  Let  $\tau $,  $\rho$
be as in the previous remark. If $\mathbf a$ is the characteristic
vector of $\tau$ then we get for any $i$
\[a_i = \sum_j a_j \, \left (e ^{- A \tau} \right )_{ji} > \rho.\]
\end{rem}

\medskip

 For any stopping time $\tau$, we consider the shift flow $\phi_\tau$ on $\mathcal D$ defined by :
\begin{eqnarray*}
  \phi_\tau: \mathcal D &\to& \mathcal D \\
  \omega &\mapsto& \omega(\cdot + \tau(\omega)).
  \end{eqnarray*}
 We proceed by establishing some related properties.

\smallskip

\begin{lemma}\label{convergeflow}
Assume that $\tau_n$ is a sequence of stopping times converging to
$\tau$ uniformly in $\mathcal D$, then
\[ \phi_{\tau_n} \to \phi_{\tau} \qquad\hbox{as $n \to + \infty$}\]
pointwise in $\mathcal D$, with respect to the Skorohod convergence,
see Appendix \ref{cadpath} for the definition.

\end{lemma}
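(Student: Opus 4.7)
The plan is to reduce the claim to the standard fact that right-translation is continuous on the Skorohod space. Fix $\omega \in \mathcal D$ and set $s_n := \tau_n(\omega)$, $s := \tau(\omega)$, $\delta_n := s_n - s$; uniform convergence of $\tau_n$ to $\tau$ on $\mathcal D$ gives in particular $\delta_n \to 0$. Writing $y := \phi_\tau(\omega) \in \mathcal D$, we have
\[
\phi_{\tau_n}(\omega)(t) \;=\; \omega(t + s + \delta_n) \;=\; y(t + \delta_n),
\]
so it suffices to prove that $y(\cdot + \delta_n) \to y$ in the Skorohod topology for any $y \in \mathcal D$ whenever $\delta_n \to 0$.

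Skorohod convergence on $\mathcal D$ is characterized by $J_1$--convergence on every compact interval $[0,T]$ whose right endpoint is a continuity point of $y$; such $T$'s are dense because the jump set of $y$ is countable. For a fixed such $T$, the goal is to produce increasing homeomorphisms $\lambda_n : [0,T] \to [0,T]$ fixing the endpoints, with $\|\lambda_n - \mathrm{id}\|_\infty \to 0$ and $\sup_{t \in [0,T]} |y(\lambda_n(t) + \delta_n) - y(t)| \to 0$. I would invoke here the classical property of c\`{a}dl\`{a}g paths that for every $\varepsilon > 0$ there is a partition $0 = t_0 < t_1 < \cdots < t_k = T$ of positive minimum gap on which the oscillation of $y$ is smaller than $\varepsilon$ on each $[t_{i-1},t_i)$.

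Then I would take $\lambda_n$ piecewise linear, fixing the endpoints of $[0,T]$ and sending $t_i$ to $t_i - \delta_n$ for $i = 1, \ldots, k-1$ --- well defined as a strictly increasing homeomorphism as soon as $|\delta_n|$ is smaller than the minimum partition gap. A direct check yields $\|\lambda_n - \mathrm{id}\|_\infty \leq |\delta_n|$, and on every internal piece $[t_{i-1}, t_i]$ with $i = 2, \ldots, k-1$ the identity $\lambda_n(t) + \delta_n = t$ holds, giving $y(\lambda_n(t) + \delta_n) = y(t)$ exactly. On the two boundary pieces the oscillation bound controls $|y(\lambda_n(t) + \delta_n) - y(t)|$ by $\varepsilon$. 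Letting $n \to \infty$ and then $\varepsilon \to 0$ closes the argument.

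The main obstacle is the endpoint bookkeeping: guaranteeing that $\lambda_n$ is a genuine homeomorphism fixing both endpoints while aligning jump locations in the interior, and verifying that the oscillation bound actually controls the error on the two boundary intervals, where $\lambda_n$ departs from a pure shift. Both are standard: one chooses $T$ outside the countable jump set of $y$ and appeals to the $w'$--modulus characterization of c\`{a}dl\`{a}g paths.
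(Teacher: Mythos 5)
Your approach is in essence the same as the paper's --- both arguments exhibit time changes close to the identity to witness Skorohod convergence --- but yours is substantially more careful about the endpoint bookkeeping. The paper simply produces the affine maps $g_n(t) = t + \tau(\omega) - \tau_n(\omega)$ satisfying $\phi_\tau(\omega) = \phi_{\tau_n}(\omega) \circ g_n$ and declares convergence because $g_n \to \mathrm{id}$ uniformly; it never remarks that $g_n(0) \neq 0$, so these $g_n$ are not admissible Skorohod reparametrizations. You correctly recognize that a pure shift must be replaced, near the endpoints of $[0,T]$, by a homeomorphism fixing $0$ and $T$, and your piecewise-linear $\lambda_n$ is the right object for that purpose.

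That said, there is a genuine gap, and it is shared with the paper. Your oscillation bound on $[0,t_1)$ concerns only $y = \phi_\tau(\omega)$, i.e.\ $\omega$ restricted to $[\tau(\omega), \tau(\omega) + t_1)$; it says nothing about $\omega$ just to the \emph{left} of $\tau(\omega)$. When $\delta_n = \tau_n(\omega) - \tau(\omega) < 0$, the value $\phi_{\tau_n}(\omega)(0) = \omega(\tau_n(\omega))$ probes precisely that region. If $\omega$ jumps at $\tau(\omega)$ and $\tau_n(\omega) < \tau(\omega)$ along a subsequence, then $\phi_{\tau_n}(\omega)(0) \to \omega(\tau(\omega)^-) \neq \omega(\tau(\omega)) = \phi_\tau(\omega)(0)$; since every admissible time change fixes $0$, no choice of $\lambda_n$ can repair this and Skorohod convergence fails. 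Equivalently, your reduced claim that $y(\cdot + \delta_n) \to y$ for \emph{any} $\delta_n \to 0$ is false for $\delta_n \to 0^-$ (it is not even well posed, as $y(t+\delta_n)$ is undefined for $t < -\delta_n$, and the object it stands in for, $\omega(t + \tau_n(\omega))$, genuinely sees new data). The lemma therefore needs the tacit extra hypothesis $\tau_n \geq \tau$, under which $\delta_n \geq 0$ and right-continuity of $\omega$ rescues the left boundary piece exactly as you describe. This hypothesis is always satisfied where the lemma is applied, since the approximating stopping times come from the dyadic scheme of Proposition \ref{stoppapproxi}, which gives $\tau_n \geq \tau$ by construction; you (and the paper) should state it explicitly. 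With it in place, your argument is the careful version of what the paper only sketches.
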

\begin{proof}
We fix $\omega \in \mathcal D$,  we  set
\[g_n(t)= t + \tau(\omega) - \tau_n(\omega) \qquad\hbox{for any $n$, $t \geq 0$}.\]
We have for any $t$
\[\phi_\tau(\omega)(t)= \omega(t+ \tau_n(\omega) + (\tau(\omega) -
\tau_n(\omega)))= \phi_{\tau_n}(\omega)(g_n(t) ).\] This yields the
asserted convergence because the $g_n$ are a  sequence of strictly
increasing functions uniformly converging to the identity.
$\hfill{\Box}$
\end{proof}
\medskip

\begin{proposition}\label{flowcont} The shift flow $\phi_\tau: \mathcal D \to \mathcal D$ is measurable.
\end{proposition}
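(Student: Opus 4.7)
The plan is to realize $\phi_\tau$ as a pointwise Skorohod limit of shift flows attached to simple stopping times, which are transparently measurable, and then invoke the Polish character of the Skorohod topology to conclude that pointwise limits of Borel maps remain Borel. Concretely, I would approximate $\tau$ from above by the simple stopping times $\tau_n$ of Proposition \ref{stoppapproxi}, so that $\tau_n \geq \tau$ and $0 \leq \tau_n - \tau \leq 2^{-n}$, whence $\tau_n \to \tau$ uniformly on $\mathcal D$; Lemma \ref{convergeflow} then supplies $\phi_{\tau_n}(\omega) \to \phi_\tau(\omega)$ in the Skorohod sense for every $\omega \in \mathcal D$.

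Next I would establish measurability of each $\phi_{\tau_n}$ directly. Since $\tau_n$ takes values in the countable set $\{j/2^n : j \in \N\}$, the level sets $E_j = \{\tau_n = j/2^n\}$ form a measurable partition of $\mathcal D$. On $E_j$ the flow $\phi_{\tau_n}$ coincides with the deterministic shift $\omega \mapsto \omega(\cdot + j/2^n)$, and deterministic shifts are Skorohod-continuous on $\mathcal D$ (a standard property of c\`{a}dl\`{a}g path spaces, readable from the definition of the Skorohod metric recalled in Appendix \ref{cadpath}). Hence $\phi_{\tau_n}$ is a countable splicing of Borel maps on measurable pieces, and is itself Borel.

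Finally, since $\mathcal D$ endowed with the Skorohod topology is Polish, pointwise limits of Borel-measurable maps with values in $\mathcal D$ remain Borel-measurable; combining this with the pointwise convergence $\phi_{\tau_n} \to \phi_\tau$ yields the desired measurability of $\phi_\tau$. The only nonbookkeeping point is the Skorohod-continuity of deterministic time shifts, which is classical and which I would simply cite (or, if an internal proof is preferred, verify directly using the identity $\phi_\tau(\omega)(t) = \phi_{\tau_n}(\omega)(g_n(t))$ already exploited in the proof of Lemma \ref{convergeflow}); apart from this, the argument is a straightforward assembly of tools already developed in the section.
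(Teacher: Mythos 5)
Your proof is correct and follows essentially the same route as the paper: reduce to simple stopping times by splicing deterministic shifts over the measurable level sets, then pass to a general $\tau$ by approximating with the simple $\tau_n$ of Proposition \ref{stoppapproxi} and invoking Lemma \ref{convergeflow} to get pointwise Skorohod convergence, hence measurability of the limit. The only cosmetic difference is that you argue Skorohod \emph{continuity} of the deterministic shifts, whereas the paper simply records their measurability (which is all that is needed).
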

\begin{proof} If $\tau$ is a simple stopping
time, say of the form $\tau= \sum_k t_k \, \mathbb I(E_k)$, then
\[\phi_\tau(\omega)= \sum_k \phi_{t_k}(\omega) \,  \mathbb I(E_k)(\omega)\]
and the assertion follows being $\phi_{t_k}$ measurable for any $k$,
$\mathbb I(E_k)$ measurable. If $\tau$ is not simple then, by Proposition \ref{splstopping}, there exists
a sequence of simple stopping times $\tau_n$ converging to $\tau$
uniformly in $\mathcal D$, this  implies that $\phi_\tau$ is measurable as
well, as pointwise limit of measurable maps, in force of Lemma
\ref{convergeflow}.
 $\hfill{\Box}$
\end{proof}
\medskip

We now define the probability measure $\phi_\tau \# \mathbb
P_{\mathbf{a}}$, for $\mathbf{a} \in \mathcal S$.  The following
result generalizes Proposition \ref{pushprbdet} to shifts given for
stopping times. It will be used in Theorem \ref{cod}  and in Lemma
\ref{preprese}.
\smallskip

\begin{theorem}\label{flowflow} Let $\mathbf{a}$ be a probability vector, then
$$ \phi_\tau \# \mathbb P_{\mathbf{a}} = \mathbb P_{\mathbf{a}\,e^{-A \tau}} .$$
\end{theorem}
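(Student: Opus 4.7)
The plan is to reduce the general case to simple stopping times by approximation, exploiting Proposition \ref{splstopping}, Lemma \ref{convergeflow} and Lemma \ref{media}, and to handle simple stopping times by a decomposition argument based on the Markov property embodied in Lemma \ref{lem4} and Proposition \ref{pushprbdet}.

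First I would reduce to simple $\tau$. Write $\tau = \sum_k t_k\,\mathbb I(E_k)$ with the $E_k$ pairwise disjoint and $E_k \in \mathcal F_{t_k}$. For any Borel set $B \subset \mathcal D$, observe that $\phi_\tau^{-1}(B)$ is the disjoint union of the sets $E_k \cap \phi_{t_k}^{-1}(B)$. Then use the Markov-type identity already available for deterministic shifts, namely
\[
\mathbb P_\mathbf{a}\bigl(E_k \cap \phi_{t_k}^{-1}(B)\bigr) \;=\; \bigl(\omega(t_k)\#(\mathbb P_\mathbf{a}\mres E_k)\bigr) \cdot \mathbf v_B,
\]
where $\mathbf v_B = (\mathbb P_{\mathbf e_i}(B))_i$; this is exactly what Lemma \ref{lem4} (used already in the proof of Proposition \ref{corollozzo}) provides. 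Summing over $k$ and invoking the definition \eqref{e58} of $e^{-A\tau}$ gives
\[
\phi_\tau \# \mathbb P_\mathbf{a}(B) \;=\; \sum_k \bigl(\omega(t_k)\#(\mathbb P_\mathbf{a}\mres E_k)\bigr)\cdot \mathbf v_B \;=\; (\mathbf{a}\,e^{-A\tau})\cdot \mathbf v_B \;=\; \mathbb P_{\mathbf{a}\,e^{-A\tau}}(B),
\]
where the last equality uses the affine dependence $\mathbb P_\mathbf{b} = \sum_i b_i\,\mathbb P_{\mathbf e_i}$.

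For a general bounded stopping time $\tau$, I would select, via Proposition \ref{splstopping}, a sequence of simple stopping times $\tau_n$ converging to $\tau$ uniformly on $\mathcal D$. Lemma \ref{convergeflow} yields $\phi_{\tau_n}(\omega) \to \phi_\tau(\omega)$ pointwise in the Skorohod sense, so for every bounded $f$ continuous on $\mathcal D$ the dominated convergence theorem gives
\[
\int f\,d(\phi_{\tau_n}\#\mathbb P_\mathbf{a}) \;\longrightarrow\; \int f\,d(\phi_\tau\#\mathbb P_\mathbf{a}).
\]
On the other side Lemma \ref{media} supplies $\mathbf{a}\,e^{-A\tau_n} \to \mathbf{a}\,e^{-A\tau}$, hence, by linearity of $\mathbf b \mapsto \mathbb P_\mathbf{b}$, $\mathbb P_{\mathbf{a}\,e^{-A\tau_n}} \to \mathbb P_{\mathbf{a}\,e^{-A\tau}}$ weakly. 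Combining these two convergences with the already-established identity $\phi_{\tau_n}\#\mathbb P_\mathbf{a} = \mathbb P_{\mathbf{a}\,e^{-A\tau_n}}$ at the simple-stopping-time level yields equality of the two limit measures.

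The main obstacle I anticipate is the simple-stopping-time step: one needs to confirm that the deterministic-time push-forward result (Proposition \ref{pushprbdet}) indeed splits compatibly on the conditioning sets $E_k \in \mathcal F_{t_k}$, i.e., that restricting $\mathbb P_\mathbf{a}$ to an $\mathcal F_{t_k}$-measurable event and then pushing the future forward is the same as first pushing forward and then restricting. This is precisely the content imported from Lemma \ref{lem4}; once that is granted, the rest is a telescoping sum. The approximation argument is comparatively routine, provided one is careful that weak convergence against bounded Skorohod-continuous test functions suffices to identify the two Borel probability measures on $\mathcal D$.
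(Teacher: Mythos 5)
Your overall strategy coincides with the paper's: reduce to simple stopping times, decompose by level sets $E_k\in\mathcal F_{t_k}$, push each piece forward through $\phi_{t_k}$, sum, and then handle general $\tau$ by approximating with simple stopping times, invoking Lemma \ref{convergeflow}, dominated convergence and Lemma \ref{media} to pass to the limit. The approximation half of your argument is essentially the paper's and is fine (the paper uses Proposition \ref{conti} to identify the weak limit, but your reasoning is equivalent).

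There is, however, a genuine gap in the simple-stopping-time step, at precisely the place you flagged as "the main obstacle." You assert that
\[
\mathbb P_\mathbf{a}\bigl(E_k\cap\phi_{t_k}^{-1}(B)\bigr)=\bigl(\omega(t_k)\#(\mathbb P_\mathbf{a}\mres E_k)\bigr)\cdot\mathbf v_B
\]
for an arbitrary Borel $B\subset\mathcal D$, and you attribute this to Lemma \ref{lem4}. But Lemma \ref{lem4} only controls the one-dimensional marginals $\omega(s)\#(\mathbb P_\mathbf{a}\mres E)$ for $s\ge t$; that is, it gives your identity when $B$ is a one-time cylinder $\{\omega\mid\omega(s)=i\}$. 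Your claim is the full statement that the \emph{law of the shifted process} $\phi_{t}\#(\mathbb P_\mathbf{a}\mres E)$ is a (nonnegative) multiple of some $\mathbb P_\mathbf{b}$; this is a strictly stronger assertion than control of the one-time marginals, since a measure on the path space is not determined by its one-dimensional distributions. To close the gap one must compute the finite-dimensional cylinder probabilities of $\phi_t\#(\mathbb P_\mathbf{a}\mres E)$ (either directly from the defining formula \eqref{e54}, or by iterating Lemma \ref{lem4}), and then extend to general Borel sets via Proposition \ref{klenke}. This is exactly what the paper isolates as the preliminary Lemma \ref{preflowflow} and proves by an explicit cylinder computation, including a nontrivial passage from cylinders to multi-cylinders to general measurable sets. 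Your proposal should either prove this auxiliary Markov-property lemma, or cite it explicitly rather than Lemma \ref{lem4}; once it is in hand, the rest of your telescoping sum and the identification $\sum_k\omega(t_k)\#(\mathbb P_\mathbf{a}\mres E_k)=\mathbf a\,e^{-A\tau}$ are correct.
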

\medskip

We need the following preliminary result:

\smallskip

\begin{lemma}\label{preflowflow} Let $\mathbf{a}$, $t$, $E$ be a vector in $\mathcal S$,
  a positive deterministic time and a set in
$\mathcal F_t$, respectively, then
\[ \phi_t \# (\mathbb P_\mathbf{a} \mres E) = \mathbb P_{\mathbf{a}}(E) \, \mathbb P_{\mathbf{b}}\qquad\hbox{for some  $\mathbf{b} \in \mathcal S $.} \]
\end{lemma}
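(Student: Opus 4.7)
The plan is to disintegrate the restricted measure $\mathbb P_{\mathbf a} \mres E$ according to the value $\omega(t)$ and then invoke the Markov property at the deterministic time $t$. First I would decompose $E \in \mathcal F_t$ as a disjoint union
\[
E = \bigsqcup_{j=1}^m E_j, \qquad E_j := E \cap \{\omega(t) = j\},
\]
noting that each $E_j$ still lies in $\mathcal F_t$. Since $\phi_t$ and the pushforward operation are compatible with countable (here finite) decompositions, this yields
\[
\phi_t \# (\mathbb P_{\mathbf a} \mres E) = \sum_{j=1}^m \phi_t \# (\mathbb P_{\mathbf a} \mres E_j).
\]

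The core step will be to prove the Markov-type identity
\[
\phi_t \# (\mathbb P_{\mathbf a} \mres E_j) = \mathbb P_{\mathbf a}(E_j) \, \mathbb P_j
\]
for each $j$. Intuitively this says that, conditionally on $\{\omega(t) = j\}$, the law of the shifted trajectory $\omega(\cdot + t)$ is $\mathbb P_j$ and is independent of any additional pre-$t$ information encoded in $E_j$. To make this rigorous inside the paper's explicit computational framework, I would first verify the identity on a $\pi$-system generating $\mathcal F_t$, namely finite intersections of cylinder events $\{\omega(s_1) = k_1, \ldots, \omega(s_r) = k_r\}$ with $s_1 < \cdots < s_r \leq t$. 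On such cylinders both sides can be computed directly from the definition of $\mathbb P_{\mathbf a}$ in terms of the transition matrices $e^{-As}$, and the identity reduces to an elementary equality of products of entries of these matrices, consistent with Proposition \ref{pushprbdet}. A monotone class (Dynkin) argument then extends the identity to all of $\mathcal F_t$.

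Once this is in hand, the conclusion is immediate. If $\mathbb P_{\mathbf a}(E) = 0$, then both sides of the claim vanish and any $\mathbf b \in \mathcal S$ works. Otherwise, I set
\[
b_j := \frac{\mathbb P_{\mathbf a}(E_j)}{\mathbb P_{\mathbf a}(E)}, \qquad j = 1, \ldots, m,
\]
so that $\mathbf b \in \mathcal S$ (the $b_j$ are nonnegative and sum to $1$), and summing gives
\[
\phi_t \# (\mathbb P_{\mathbf a} \mres E) = \sum_j \mathbb P_{\mathbf a}(E_j) \, \mathbb P_j = \mathbb P_{\mathbf a}(E) \, \sum_j b_j \, \mathbb P_j = \mathbb P_{\mathbf a}(E) \, \mathbb P_{\mathbf b},
\]
where the last equality uses the very definition $\mathbb P_{\mathbf b} = \sum_j b_j \mathbb P_j$.

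The main obstacle I anticipate is the rigorous verification of the Markov-type identity within the self-contained, non-probabilistic presentation adopted in the paper: one has to check the cylinder computation carefully and ensure that the monotone class extension only uses properties of $\mathcal F_t$ and $\phi_t$ already established in the appendices. After that, the remainder is bookkeeping and identification of $\mathbf b$.
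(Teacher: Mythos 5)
Your plan is correct and, importantly, it bottoms out on the very same explicit cylinder computation with the transition matrices $e^{-As}$ that the paper performs; the difference is entirely in the organization and the extension step. The paper fixes a cylinder $E$ with last time $t_k \leq t$, decomposes the \emph{target} event $C$ over the sets $\mathcal D_i$, reads off the coefficients $\mu_i = (e^{-A(t-t_k)})_{j_k i}$, and thereby obtains the result first for cylinders, then for multi--cylinders by additivity, and finally for arbitrary $E \in \mathcal F_t$ via the approximation Proposition \ref{klenke} together with the weak-convergence continuity of $\mathbf b \mapsto \mathbb P_{\mathbf b}$ (Proposition \ref{conti}); in this route $\mathbf b$ is only produced as a limit $\lim_n \mathbf b_n$. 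You instead decompose the \emph{source} set $E$ by the value of $\omega(t)$, isolate the Markov-type identity $\phi_t \# (\mathbb P_{\mathbf a} \mres E_j) = \mathbb P_{\mathbf a}(E_j)\,\mathbb P_j$, verify it on the generating $\pi$-system of cylinders with times $\leq t$, and extend over $\mathcal F_t$ by Dynkin's $\pi$--$\lambda$ theorem; this yields the explicit formula $b_j = \mathbb P_{\mathbf a}(E \cap \{\omega(t)=j\})/\mathbb P_{\mathbf a}(E)$ at once. For cylinder $E$ the two coefficient vectors agree, since then $\mathbb P_{\mathbf a}(E_j)/\mathbb P_{\mathbf a}(E) = (e^{-A(t-t_k)})_{j_k j} = \mu_j$. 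Your route is the more standard probabilistic one and gives the sharper conditional statement directly; the paper's is more explicit and avoids invoking a monotone-class theorem beyond what is already packaged in Proposition \ref{klenke}, consistent with its stated aim of remaining elementary. One small point to keep in mind when you write it out: in the $\pi$--$\lambda$ step you should fix a target cylinder $F$ and show the class of admissible $E \in \mathcal F_t$ is a $\lambda$-system; the case $\mathbb P_{\mathbf a}(E)=0$, which you correctly flag, is absorbed automatically since then both sides of the identity vanish for each $j$.
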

\begin{proof}
We first assume $E$ to be a cylinder, namely
\[E= \mathcal C(t_1, \cdots,t_k; j_1, \cdots, j_k)\]
for some times and indices, notice that the condition $E \in \mathcal F_t$
implies $t_k \leq t$.  We fix $i \in \{1, \cdots, m\}$ and consider a cylinder
$C \subset \mathcal D_i$, namely
\[C= \mathcal C(0, s_2, \cdots, s_m; i, i_2,  \cdots, i_m)\]
for some choice of times and indices. We  set
\[F= \{\omega \mid \phi_t(\omega) \in C\} \cap E,\]
then
\[F=\mathcal C(t_1, \cdots,t_k, t, t+s_2, \cdots,t+s_m; j_1, \cdots, j_k, i,
i_2, \cdots, i_m).\] We have
\begin{eqnarray*}
   \phi_t \# (\mathbb P_\mathbf{a} \mres E)(C) &=& \mathbb P_\mathbf{a} (F) \\ &=& \left (\mathbf{a} \,e^{-A t_1} \right )_{j_1} \,
\prod_{l=2}^{k} \left (e^{-A (t_l-t_{l-1})} \right )_{j_{l-1} \,
j_l} \, \left (e^{-A (t- t_k)} \right )_{ j_k \, i} \,
\prod_{r=2}^{m} \left (e^{-A (s_r-s_{r-1})} \right )_{i_{r-1} \, i_r} \\
 &=& \mathbb P_\mathbf{a} (E) \, \left (e^{-A (t- t_k)} \right )_{ j_k \, i} \,
\prod_{r=2}^{m} \left (e^{-A (s_r-s_{r-1})} \right )_{i_{r-1} \,
i_r},
\end{eqnarray*}
we also have
\[ \mathbb P_i(C)= \prod_{r=2}^{m} \left (e^{-A (s_r-s_{r-1})} \right )_{i_{r-1} \,
i_r},\] and we consequently get the relation
\[ \phi_t \# ( \mathbb P_\mathbf{a} \mres E)(C)= \mathbb P_\mathbf{a} (E) \, \mu_i \, \mathbb P_i(C)\]
with
\begin{equation}\label{floflo1}
    \mu_i=  \left (e^{-A (t- t_k)} \right )_{ j_k \,
i}
\end{equation}
 just depending on $E$ and $i$. If $C$ is any cylinder, we write
 \begin{equation}\label{floflo2}
   \phi_t \# (\mathbb P_\mathbf{a} \mres E)(C) = \sum_i  \phi_t \# (\mathbb P_\mathbf{a}\mres
E)(C \cap \mathcal D_i) =  \mathbb P_\mathbf{a}(E) \, \sum_i  \mu_i
\, \mathbb P_i(C)
\end{equation}
 where
the $\mu_i$ are defined as in \eqref{floflo1}. Taking into account
that $\mu_i \geq 0$ for any $i$ and $\sum_i \mu_i =1$, $\mathbf{b}
:= \sum_i \mu_i \, \mathbf{e}_i \in \mathcal S$,  we derive from
\eqref{floflo2}
\[\phi_t \# (\mathbb P_\mathbf{a} \mres E)(C) =  \mathbb P_\mathbf{a}(E) \, \mathbb P_\mathbf{b}(C).\]
This in turn implies by Proposition \ref{klenke}
\begin{equation}\label{floflo3}
   \phi_t \# (\mathbb P_\mathbf{a}\mres E) =  \mathbb P_\mathbf{a}(E) \, \mathbb P_\mathbf{b}
\end{equation}
showing the assertion in the case where $E$ is a cylinder.
If instead $E$ is a multi--cylinder, namely $E=\cup_j E_j$ with
$E_j$ mutually disjoint cylinders then by the previous step
\[\phi_t \# (\mathbb P_\mathbf{a}\mres E) = \sum_j \phi_t \#(\mathbb P_\mathbf{a} \mres E_j) = \sum_j \mathbb P_\mathbf{a}(E_j) \, \mathbb P_{\mathbf{b}_j}\]
which again implies \eqref{floflo3} with \[ \mathbf{b} = \sum_j
\frac{\mathbb P_\mathbf{a}(E_j)}{\mathbb P_\mathbf{a}(E)}  \,
\mathbf{b}_j.\] Finally, for a general $E$, we know from Proposition
\ref{klenke} that there is a sequence of multi--cylinders $E_n$ with
\begin{equation}\label{flowflow2}
   \lim_n \mathbb P_\a(E_n \triangle E)=0.
\end{equation}
Given $F \in \F$, we set
\[C = \{\omega \mid \phi_t(\omega) \in F\}, \]
 we have
\[ \phi_t \# (\mathbb P_\a \mres E_n)(F)= \mathbb P_\a  (C  \cap E_n  ) \leq \mathbb P_\a \big
((C\cap E) \cup (E \triangle E_n) \big ) =  \phi_t \# (\mathbb P_\a
\mres E)(F) + \mathbb P_\a (E \triangle E_n) \] and similarly
\[ \phi_t \# (\mathbb P_\a \mres E)(F) \leq  \phi_t \# (\mathbb P_\a
\mres E_n)(F) + \mathbb P_\a (E \triangle E_n) .\] We deduce  in
force of  \eqref{flowflow2}
\[\lim_n \phi_t \# (\mathbb P_\a \mres E_n)(F) = \phi_t \# (\mathbb P_\a \mres
E)(F)\] which in turn implies that $\phi_t \# (\mathbb P_\a \mres
E_n)$ weakly converges to $\phi_t \# (\mathbb P_\a \mres E)$. Since,
by the previous step in the proof
\[\phi_t \# (\mathbb P_\a \mres E_n)=
\mathbb P_\a(E_n) \, \mathbb P_{\b_n} \qquad\hbox{ for some $\b_n
\in \mathcal S$}\] we derive from Proposition \ref{conti} and
\eqref{flowflow2}
\[\phi_t \# (\mathbb
P_\a \mres E)= \mathbb P_\a(E) \, \mathbb P_{\b} \qquad\hbox{with
$\b =\lim_n \b_n$. }\] This concludes the proof.

 $\hfill{\Box}$
\end{proof}
\\
\begin{proof} ({\em of the Theorem \ref{flowflow}}) \\
 We first show that
 \begin{equation}\label{flowflow1}
   \phi_\tau \# \mathbb P_\mathbf{a} = \mathbb P_\mathbf{b}\qquad\hbox{for a suitable $\mathbf{b} \in
   \mathcal S$.}
\end{equation}
If $\tau = \sum_k t_k \, \mathbb I(E_k)$ is simple then by Lemma
\ref{preflowflow}
\[\phi_\tau \# \mathbb P_\mathbf{a} =  \sum_k \phi_{t_k} \# (\mathbb P_\mathbf{a} \mres E_k) = \sum_k \mathbb P_\mathbf{a}(E_k) \, \mathbb P_{\mathbf{b}_k}\]
for some $b_k \in \mathcal S$, and we deduce  \eqref{flowflow1} with
$\mathbf{b}= \sum_k \mathbb P_\mathbf{a}(E_k) \, \mathbf{b}_k$.

Given a general stopping time $\tau$, we approximate it by a
sequence of simple stopping times $\tau_n$, and, exploiting the
previous step, we consider $\mathbf{b}_n \in \mathcal S$ with
\[\phi_{\tau_n} \# \mathbb P_\mathbf{a}= \mathbb P_{\mathbf{b}_n}.\]
We know from Lemma \ref{convergeflow}  that
\[ \phi_{\tau_n}(\omega) \to \phi_\tau(\omega) \qquad\hbox{for any $\omega$
in the Skorohod sense,}\] and we  derive via  Dominate Convergence
Theorem
\[ \mathbb E_\mathbf{a} f(\phi_{\tau_n}) \to \mathbb E_\mathbf{a} f( \phi_\tau)\]
for any bounded measurable function $f: \mathcal D \to \mathbb R$. Using change of
variable formula (\ref{changevar}) we get
\[ \int_\mathcal D f \, d \phi_{\tau_n} \#  \mathbb P_\mathbf{a} \to \int_\mathcal D f \, d \phi_{\tau} \#
\ \mathbb P_\mathbf{a}\] or equivalently
\[ \mathbb P_{\mathbf{b}_n} = \phi_{\tau_n} \#  \mathbb P_\mathbf{a} \to \phi_{\tau} \#  \mathbb P_\mathbf{a}\]
 in the sense of weak convergence of measures. This in turn implies  by the  continuity
property stated in Proposition \ref{conti} that $\mathbf{b}_n$ is
convergent in $\mathbb R^{m}$ and
\[\mathbb P_{\mathbf{b}_n} \to \mathbb P_{\mathbf{b}}\quad\qquad\hbox{with $\mathbf{b}= \lim_n \mathbf{b}_n$}\]
which   shows \eqref{flowflow1}.  We can compute the components of
$\mathbf{b}$  via
\[ b_i= \mathbb P_\mathbf{a}\{\omega\mid \phi_\tau(\omega) \in \mathcal D_i\}= \mathbb P_\mathbf{a}\{\omega \mid \omega(\tau(\omega))
=i\}= \big (\omega(\tau) \# \mathbb P_\mathbf{a} \big )_i= \big(
\mathbf{a} \, e^{- A \tau} \big )_i.\] This concludes the proof.
$\hfill{\Box}$
\end{proof}
\section{Cycle iteration}\label{reptcyc}
\parskip +3pt

It is immediate that we can  construct  a sequence of
(deterministic) cycles going through a given closed  curve any
number of times. We aim at generalizing this iterative procedure in
the random setting we are working with, starting from
$\tau^0$--cycle, for some stopping time $\tau^0$.  In this case the
construction is more involved and requires some details.

\smallskip

Let $\tau^0$, $\Xi^0$  be a  simple stopping time and a
$\tau^0$--cycle, respectively, we recursively define
 for $j \geq 0$
 \begin{equation}\label{buckle1}
    \tau^{j+1}(\omega) = \tau^0(\omega) + \tau^j(\phi_{\tau^0}(\omega))
\end{equation}
and
\begin{equation}\label{cape1}
   \Xi^{j+1}(\omega)(s) = \left \{
\begin{array}{ll}
    \Xi^j(\omega)(s), & \hbox{for  $s \in [0,\tau^j(\omega))$} \\
    \Xi^0(\phi_{\tau^j}(\omega))(s-\tau^j(\omega)) & \hbox{for  $s \in [\tau^j(\omega),+ \infty)$.}
\end{array}
\right.
\end{equation}

\smallskip

We will prove below that the $\Xi^j$ make up the sequence of
iterated random cycles we are looking for. A first step is:

\medskip

\begin{proposition}\label{cedro} The
 $\tau^j$, defined by \eqref{buckle1},  are  simple stopping times for
 all $j$. If, in addition, $\tau^0 \gg \delta$ in $\mathcal D_i$, for some $i \in \{1, \cdots, m\}$, $\delta >0$,
 then $\tau^j \gg \delta$ in $\mathcal D_i$.
\end{proposition}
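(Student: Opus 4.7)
The plan is to prove both assertions by induction on $j$, with the base case $j=0$ holding by hypothesis. For the inductive step, assume $\tau^j$ is a simple stopping time and, under the optional hypothesis, also $\tau^j \gg \delta$ in $\mathcal D_i$.

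For the simple-stopping-time claim, write the canonical representations $\tau^0 = \sum_k s_k \, \mathbb{I}(E_k)$ with $E_k \in \mathcal{F}_{s_k}$ and $\tau^j = \sum_l t_l \, \mathbb{I}(F_l)$ with $F_l \in \mathcal{F}_{t_l}$ (the latter by the inductive hypothesis). Since $\phi_{\tau^0} = \phi_{s_k}$ on $E_k$, substitution in \eqref{buckle1} yields
$$
\tau^{j+1} \;=\; \sum_{k,l} (s_k + t_l) \, \mathbb{I}\bigl(E_k \cap \phi_{s_k}^{-1}(F_l)\bigr).
$$
The shift property gives $\phi_{s_k}^{-1}(F_l) \in \mathcal{F}_{s_k + t_l}$ (the event $F_l$ is a condition on $\omega$ restricted to $[0, t_l]$, which after shifting by $s_k$ becomes a condition on $\omega$ restricted to $[s_k, s_k + t_l]$), so each level set lies in $\mathcal{F}_{s_k + t_l}$, certifying that $\tau^{j+1}$ is a simple stopping time.

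For the $\gg \delta$ claim on $\mathcal D_i$, the lower bound $\tau^{j+1} \geq \tau^0 \geq \delta$ on $\mathcal D_i$ follows at once from the recursion. The hypothesis $\tau^0 \gg \delta$ in $\mathcal D_i$ upgrades each $E_k \cap \mathcal D_i$ from $\mathcal{F}_{s_k}$ to $\mathcal{F}_{s_k - \delta}$. Restricting the previous decomposition to $\mathcal D_i$, the target
$$
\{\tau^{j+1} \leq t\} \cap \mathcal D_i \;=\; \bigcup_{k,l:\ s_k + t_l \leq t} (E_k \cap \mathcal D_i) \cap \phi_{s_k}^{-1}(F_l) \;\in\; \mathcal{F}_{t-\delta}
$$
reduces to showing $(E_k \cap \mathcal D_i) \cap \phi_{s_k}^{-1}(F_l) \in \mathcal{F}_{s_k + t_l - \delta}$ whenever $s_k + t_l \leq t$. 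The main obstacle is that the naive shift bound only places $\phi_{s_k}^{-1}(F_l)$ in $\mathcal{F}_{s_k + t_l}$, which is exactly $\delta$ too coarse. The natural route around this is to further partition $\phi_{s_k}^{-1}(F_l)$ over the random value $\omega(s_k) = k'$, invoke the inductive hypothesis on the resulting sub-paths sitting in $\mathcal D_{k'}$, and combine the outcome with the $\mathcal{F}_{s_k - \delta}$ measurability of $E_k \cap \mathcal D_i$ to transfer the $\delta$ of predictability from the leading $\tau^0$-piece across the shift.
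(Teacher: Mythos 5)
Your proof of the first half (that the $\tau^j$ are simple stopping times) is correct and follows essentially the same route as the paper: decompose $\tau^0$ and $\tau^j$ into their level sets, observe that on each level set of $\tau^0$ the random shift $\phi_{\tau^0}$ coincides with a deterministic shift $\phi_{s_k}$, and invoke Proposition \ref{supershift} to place each level set of $\tau^{j+1}$ in the filtration at the appropriate time.

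For the second half you have correctly located the difficulty, but you do not resolve it, and the route you sketch does not go through. Your plan is to partition $\phi_{s_k}^{-1}(F_l)$ over the value $\omega(s_k) = k'$ and apply the inductive hypothesis on $\mathcal D_{k'}$; the inductive hypothesis, however, only provides $\tau^j \gg \delta$ in the single slice $\mathcal D_i$, not in $\mathcal D_{k'}$ for $k' \neq i$, so there is no $\delta$ of predictability to be gained on those branches. It is worth noting that the paper's own proof has exactly the same unjustified step: from $F_l \cap \mathcal D_i \in \mathcal F_{s_l - \delta}$ it simply asserts, via a bare ``consequently,'' that $F_l \cap \mathcal D_i \cap \{\omega : \phi_{\tau^0}(\omega) \in E_k\} \in \mathcal F_{s_l + t_k - \delta}$, with no argument for why the extra $\delta$ of predictability in the $\tau^0$-piece propagates across the shift. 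In fact this implication appears to fail: take $\tau^0 = (\delta+1)\,\mathbb I(\mathcal D_i)$, a simple stopping time with $\tau^0 \gg \delta$ in $\mathcal D_i$. Then on $\mathcal D_i$ one has $\tau^1 = \delta+1$ when $\omega(\delta+1) \neq i$ and $\tau^1 = 2(\delta+1)$ when $\omega(\delta+1) = i$, so $\{\tau^1 \leq \delta+1\} \cap \mathcal D_i = \mathcal D_i \cap \{\omega(\delta+1) \neq i\}$, which depends on $\omega$ at time $\delta+1 > 1 = (\delta+1)-\delta$ and hence is not in $\mathcal F_1$; thus $\tau^1 \not\gg \delta$ in $\mathcal D_i$. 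Your instinct that something is missing at this precise point is sound, but it does not appear that the gap can be closed along the route you outline, nor along the paper's.
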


\begin{proof} We argue by induction on $j$. The property is true for
$j=0$, assume by inductive step that $\tau^j$ is a simple stopping
time, then by Proposition \ref{flowcont} $\tau^{j+1}$ is a random
variable, as sum and composition of measurable maps,  taking
nonnegative values. Assume
\begin{eqnarray}
  \tau^0 &=& \sum_{l=1}^{m_0} s_l \, \mathbb I(F_l)  \label{cedro1}\\
  \tau^j &=& \sum_{k=1}^{m_j} t_k \, \mathbb I(E_k) \label{cedro2}
\end{eqnarray}
then the sets
\[F_l \cap \{\omega \mid \phi_{\tau^0}(\omega) \in E_k\} \qquad l=1, \cdots,
m_0, \; k=1, \cdots, m_j\] are mutually disjoint and their union is
the whole $\mathcal D$. Moreover if
\[\omega \in F_l \cap \{\omega \mid \phi_{\tau^0}(\omega) \in E_k\}\]
then
\[\tau^{j+1}(\omega) = \tau^0 (\omega) + \tau^j(\phi_{\tau^0}(\omega))= s_l
+t_k,\] which shows that $\tau^{j+1}$ is simple. Since $\tau^0$,
$\tau^j$ are stopping time then $F_l \in \mathcal F_{s_l}$ and $E_k \in
\mathcal F_{t_k}$. By Proposition \ref{supershift}
\[F_l \cap \{\omega \mid \phi_{\tau^0}(\omega) \in E_k\} \in \mathcal F_{s_l
+t_k},\] which shows that $ \tau^{j+1}$ is a stopping time.\\
Moreover if $\tau^0 \gg \delta$ in $\mathcal D_i$ then $F_l \cap
\mathcal D_i  \in \mathcal F_{s_l-\delta}$ and consequently
\[F_l \cap  \mathcal D_i \cap \{\omega \mid \phi_{\tau^0}(\omega) \in E_k\} \in \mathcal F_{s_l
+t_k -\delta},\] which shows that $ \tau^{j+1} \gg \delta$ in
$\mathcal D_i$.

$\hfill{\Box}$
\end{proof}
\bigskip

\medskip

The main result of the section is

\smallskip

\begin{theorem}\label{cod} The $\Xi^j$, as defined in \eqref{cape1},
 are  $\tau^j$--cycles for all $j$.
\end{theorem}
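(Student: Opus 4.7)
The argument is naturally by induction on $j$. The base case $j=0$ holds by assumption, so suppose $\Xi^j$ is a $\tau^j$-cycle and verify that $\Xi^{j+1}$ satisfies the same property. By \eqref{cape1}, $\Xi^{j+1}$ is the concatenation, at the random time $\tau^j$, of $\Xi^j$ on $[0,\tau^j(\omega))$ with the $\tau^j$-shift of $\Xi^0$ on $[\tau^j(\omega),+\infty)$. Three items must be checked: adaptedness and measurability of $\Xi^{j+1}$, its being a genuine control, and the closing condition $\mathcal I(\Xi^{j+1})(\tau^{j+1}(\omega))=0$ almost surely.

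Measurability of the first piece is the inductive hypothesis, and measurability of the second piece follows from Proposition \ref{flowcont} applied to $\phi_{\tau^j}$; Proposition \ref{cedro} ensures $\tau^{j+1}$ is itself a simple stopping time, so the gluing at the random time $\tau^j$ is legitimate. The control nature is obviously preserved by concatenation. For the closing identity, split the integral at $\tau^j(\omega)$: the first summand equals $\mathcal I(\Xi^j)(\tau^j(\omega)) = 0$ by induction, while after the substitution $u = s - \tau^j(\omega)$ the second summand equals $\mathcal I(\Xi^0)(\tau^0(\phi_{\tau^j}(\omega)))$ read along the shifted path $\phi_{\tau^j}(\omega)$. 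Invoking Theorem \ref{flowflow}, the push-forward of $\mathbb P_{\mathbf a}$ under $\phi_{\tau^j}$ is $\mathbb P_{\mathbf a\,e^{-A\tau^j}}$, so the $\mathbb P_{\mathbf a\,e^{-A\tau^j}}$-almost sure closure of $\Xi^0$ transfers to a $\mathbb P_{\mathbf a}$-almost sure closure of the shifted piece, which is exactly what is required.

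The main bookkeeping obstacle is reconciling the definition \eqref{buckle1}, which reads $\tau^{j+1}(\omega) = \tau^0(\omega) + \tau^j(\phi_{\tau^0}(\omega))$, with the representation $\tau^{j+1}(\omega) = \tau^j(\omega) + \tau^0(\phi_{\tau^j}(\omega))$ implicitly used in the construction of $\Xi^{j+1}$ in order to match total duration. A short preliminary lemma, proved by induction on $j$ from the flow semigroup identity $\phi_{\tau^0}(\phi_{\tau^0}(\omega))=\phi_{\tau^1}(\omega)$ and its iterates, secures this equivalent form. Once this identity is in place, the closing computation above goes through and the induction closes.
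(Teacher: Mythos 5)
Your decomposition and your treatment of the closing condition are essentially the paper's argument: split the integral at $\tau^j$, use the inductive hypothesis for the first piece, change variables via Lemma \ref{precod}, and invoke Theorem \ref{flowflow} to transfer the a.s.\ vanishing of $\int_0^{\tau^0}\Xi^0\,ds$ along $\phi_{\tau^j}$. That part is sound, and your phrasing even slightly streamlines the paper's presentation (the paper passes through Lemma \ref{nullo}, you argue directly from the push-forward identity; both are valid because a.s.\ with respect to one positive $\mathbf a$ is a.s.\ with respect to any $\mathbf b\in\mathcal S$). Your observation that Lemma \ref{precod} is the key bookkeeping identity is also on target, although the ``semigroup identity'' you cite should be $\phi_{\tau^{j-1}}\circ\phi_{\tau^0}=\phi_{\tau^j}$, which is what the paper proves.

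However, there is a genuine gap where you write that ``the control nature is obviously preserved by concatenation'' and that measurability of the second piece follows from Proposition \ref{flowcont}. Proposition \ref{flowcont} gives only that $\phi_{\tau^j}$ is $\mathcal F$-measurable; what is actually needed is the \emph{adaptedness} (nonanticipating property) of $\Xi^{j+1}$, i.e.\ that $\omega\mapsto\Xi^{j+1}(\omega)(s)$ is $\mathcal F_s$-measurable for every $s$, and this is precisely where the paper spends most of its effort. The subtlety is that the second piece $\omega\mapsto\Xi^0(\phi_{\tau^j}(\omega))(s-\tau^j(\omega))$ involves a composition with a random shift evaluated at a random time; to control the filtration one must use that $\tau^j$ is a \emph{simple} stopping time $\sum_k t_k\,\mathbb I(E_k)$, partition the set $\{\tau^j\le s\}$ along the $E_k$ with $t_k\le s$, and on each piece apply the deterministic shift $\phi_{t_k}$, the $\mathcal F'_{s-t_k}$-adaptedness of $\Xi^0$, and Proposition \ref{supershift} to land in $\mathcal F_s$. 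None of this is ``obvious,'' and without it the induction does not close, because the statement that $\Xi^{j+1}\in\mathcal K(\tau^{j+1},0)$ requires membership in $\mathcal K$ in the first place. You should restore this argument.
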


\smallskip

A lemma is preliminary.

\smallskip

\begin{lemma}\label{precod} For any $j$, $\omega$
\[\tau^{j+1}(\omega)= \tau^j(\omega) + \tau^0(\phi_{\tau^j}(\omega)).\]
\end{lemma}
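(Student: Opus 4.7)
The plan is a straightforward induction on $j$, with the only substantive point being a cocycle/semigroup identity for the shift flow.

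For the base case $j=0$, both sides of the claimed identity collapse to $\tau^0(\omega)+\tau^0(\phi_{\tau^0}(\omega))$ directly from the defining relation \eqref{buckle1}, so nothing is needed.

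For the inductive step, assume the formula holds for $j$. Apply \eqref{buckle1} at level $j+1$ to obtain
\[
\tau^{j+2}(\omega) \;=\; \tau^0(\omega) + \tau^{j+1}\bigl(\phi_{\tau^0}(\omega)\bigr),
\]
and then use the inductive hypothesis at the sample point $\phi_{\tau^0}(\omega)$ to rewrite
\[
\tau^{j+1}\bigl(\phi_{\tau^0}(\omega)\bigr) \;=\; \tau^j\bigl(\phi_{\tau^0}(\omega)\bigr) + \tau^0\Bigl(\phi_{\tau^j}\bigl(\phi_{\tau^0}(\omega)\bigr)\Bigr).
\]
Substituting and regrouping the first two summands via \eqref{buckle1} yields
\[
\tau^{j+2}(\omega) \;=\; \tau^{j+1}(\omega) + \tau^0\Bigl(\phi_{\tau^j}\bigl(\phi_{\tau^0}(\omega)\bigr)\Bigr),
\]
so it remains to identify $\phi_{\tau^j}\circ\phi_{\tau^0}$ with $\phi_{\tau^{j+1}}$.

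The required cocycle identity is pointwise: for each $\omega$ and $s\geq 0$,
\[
\phi_{\tau^j}\bigl(\phi_{\tau^0}(\omega)\bigr)(s) \;=\; \phi_{\tau^0}(\omega)\bigl(s+\tau^j(\phi_{\tau^0}(\omega))\bigr) \;=\; \omega\bigl(s+\tau^0(\omega)+\tau^j(\phi_{\tau^0}(\omega))\bigr),
\]
and the total shift equals $s+\tau^{j+1}(\omega)$ by \eqref{buckle1}, so this coincides with $\phi_{\tau^{j+1}}(\omega)(s)$. Plugging this back completes the induction.

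The main obstacle, if one can call it that, is simply bookkeeping the order of composition in the shift flow; once the semigroup-type relation $\phi_{\tau^j}\circ\phi_{\tau^0}=\phi_{\tau^{j+1}}$ is established the lemma follows mechanically. Note also that the argument never invokes measurability or the ``simple'' nature of $\tau^0$, which is reassuring because the identity is a purely pathwise statement.
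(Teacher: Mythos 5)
Your proof is correct and follows essentially the same route as the paper: induction on $j$ using the defining recursion \eqref{buckle1} together with the cocycle identity $\phi_{\tau^j}\circ\phi_{\tau^0}=\phi_{\tau^{j+1}}$, which the paper isolates as its equation \eqref{precod1}. The only superficial difference is bookkeeping (you pass from $j$ to $j+1$; the paper writes the step as $j-1$ to $j$); the substance and the key semigroup computation for the shift flow are identical.
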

\begin{proof} Given  $j \geq 1$, we preliminarily  write
\begin{eqnarray*}
  \phi_{\tau^{j-1}} ( \phi_{\tau^0}(\omega))(s) &=& \phi_{\tau^0}(\omega)(s + \tau^{j-1}(\phi_{\tau^0}(\omega) ) \\
   &=& \omega(s + \tau^0(\omega) + \tau^{j-1}(\phi_{\tau^0}(\omega) ) = \omega(s +
\tau^j(\omega))= \phi_{\tau^j}(\omega)(s)
\end{eqnarray*}
which gives
\begin{equation}\label{precod1}
   \phi_{\tau^{j-1}} \circ \phi_{\tau^0}= \phi_{\tau^j}.
\end{equation}
We proceed arguing by induction on $j$. The formula in the statement
is true for $j= 0$. We proceed showing that
 it is true  for $j+1$ provided it holds for $j \geq 0$.  We
 have, taking into account \eqref{precod1}
 \begin{eqnarray*}
   \tau^{j+1}(\omega) &=& \tau^0(\omega) + \tau^j(\phi_{\tau^0}(\omega)) =
\tau^0(\omega) + \tau^{j-1}(\phi_{\tau^0}(\omega))  +
\tau^0(\phi_{\tau^{j-1}}(\phi_{\tau^0}(\omega))) \\
    &=& \tau^j(\omega) +
\tau^0(\phi_{\tau^{j-1}}(\phi_{\tau^0}(\omega))) = \tau^j(\omega) +
\tau^0(\phi_{\tau^j}(\omega))
 \end{eqnarray*}
 as asserted.
 $\hfill{\Box}$
\end{proof}

\medskip

\begin{proof} (of Theorem \ref{cod})
The property is true for $j=0$, then we argue by induction on $j$.
We exploit the principle that $\Xi^j$ is a control  if and only the
maps $\omega \mapsto \Xi^j(\omega)(s)$ from $\mathcal D $ to
$\mathbb R^N$ are $\mathcal F_s$--measurable for all $s$. Given $s$
and a  Borel set $B$ of $\mathbb R^N$, we therefore need to show
\begin{equation}\label{cedro00}
\{\omega \mid \Xi^{j+1}(\omega)(s) \in B\} \in \mathcal F_s,
\end{equation}
 knowing  that $\Xi^0$, $\Xi^j$ are controls, the first by assumption and the latter
by inductive step. We set
\[E = \{ \tau^j > s\},\]
then we have by the very definition of $\Xi^{j+1}$\begin{equation}\label{cedro0}
   \{\omega \mid \Xi^{j+1}(s) \in B\}= F_1 \bigcup  F_2
\end{equation}
 with
\begin{eqnarray*}
  F_1 &=& \{\omega \mid \Xi^j(s) \in B\} \cap E  \\
 F_2 &=& \{\omega \mid
\Xi^0(\phi_{\tau^j}(\omega))(s-\tau^j(\omega)) \in B \} \setminus E.
\end{eqnarray*}
We know that
\begin{equation}\label{cedro10}
   F_1 \in \mathcal F_s,
\end{equation}
because $\tau^j$ is a stopping time and $\Xi^j$ a control. Assume
now $\tau^j$ to be of the form
\[\sum_{k=1}^{m_j} t_k \, \mathbb I(E_k)\]
then $E_k \setminus E = E_k$ or $E_k \setminus E = \emptyset$
according on whether $t_k \leq s$ or $t_k > s$ and so
\[ F_2 =  \bigcup_{t_k \leq s} \, \{\omega \mid \Xi^0(\phi_{t_k}(\omega))(s-t_k) \in B
\} \cap E_k \] Consequently, if $t_k \leq s$,  $\Xi^{j+1}(s)$ is
represented in $E_k $ by the composition of the following maps
\[ \omega  \xrightarrow{\psi_1} \phi_{t_k}(\omega) \xrightarrow{\psi_2} \Xi^0(\phi_{t_k}(\omega))\xrightarrow{\psi_3}
\Xi^0(\phi_{t_k}(\omega))(s-t_k).\] By the very definition of the
$\sigma$--algebra $\mathcal F'_t$, $\psi_3^{-1}(\mathcal B)  \subset
\mathcal F'_{s-t_k}$, moreover,  since $\Xi^0$  is adapted then
$\psi_2^{-1}(\mathcal F'_{s-t_k}) \subset \mathcal F_{s-t_k}$ and
finally $\psi_1^{-1}(\mathcal F_{s-t_k}) \subset \mathcal F_{s}$ by
Proposition \ref{supershift}. We deduce, taking also into account
that $E_k \in \mathcal F_{t_k} \subset \mathcal F_{s}$, that if $t_k
\leq s$ then
\[\{\omega \mid \Xi^{j+1}(s) \in B\} \cap E_k = \{\omega \mid \Xi^0(\phi_{t_k}(\omega))(s-t_k) \in B
\} \cap E_k \in \F_s,\] and consequently $F_2$, being the union of
sets in $\F_s$, belongs to $\mathcal F_s$ as well. By combining this
information with \eqref{cedro0}, \eqref{cedro10}, we prove
\eqref{cedro00} and conclude that $\Xi^{j+1}$ is a control.\\
To show that $ \Xi^{j+1}$ is a $ \tau^{j+1}$--cycle, we use the very
definition of $\tau^{j+1}$, $\Xi^{j+1}$ and write for any $\omega$
\begin{equation}\label{cod1}
    \int_0^{ \tau^{j+1}(\omega)}  \Xi^{j+1}(\omega) \, ds = I(\omega) +
    J(\omega)
\end{equation}
with
\begin{eqnarray*}
  I(\omega) &=& \int_0^{ \tau^j(\omega)}  \Xi^j(\omega) \,
ds \\
  J(\omega) &=& \int_{\tau^j(\omega)}^{ \tau^{j+1}(\omega)}
\Xi^0(\phi_{\tau^j}(\omega))(s-\tau^j(\omega)) \, ds.
\end{eqnarray*}
Due to $\Xi^j$ being a $\tau^j$--cycle, we have
 \begin{equation}\label{cod2}
     I(\omega) = 0 \quad\hbox{a.s.}
\end{equation}
We change the variable  in $J$, setting $t= s - \tau^j(\omega)$, and
exploit Lemma \ref{precod} to get
\begin{equation}\label{cod20}
    J(\omega)= \int_0^{\tau^0(\phi_{\tau^j}(\omega))}
\Xi^0(\phi_{\tau^j}(\omega))(t) \, dt.
\end{equation}
Let $E$ be any set in $\mathcal F$ and $\mathbf{a}$ a positive
probability vector. We integrate $J(\omega)$  over $E$ with respect
to $\mathbb P_\mathbf{a}$ using \eqref{cod20}, replace
$\phi_{\tau^j}(\omega))$ by $\omega$ via  change of variable
formula, and exploit Theorem \ref{flowflow}. We obtain
\begin{equation}\label{cod30}
    \int_E J(\omega) \, d\mathbb P_\mathbf{a} = \int_{\phi_{\tau^j}(E)} \left (\int_0^{\tau^0(\omega))} \Xi^0(\omega)(t) \,
dt \right ) \, d \mathbb P_{\mathbf{a} e^{- A \tau^j}} .
\end{equation}
Due to $\Xi_0$ being a $\tau^0$--cycle
\[ \int_0^{\tau^0(\omega)} \Xi^0(\omega)(t) \,
dt = 0 \qquad\hbox{a.s,}\] and therefore the integral in the right
hand--side of \eqref{cod30} is vanishing and so
\[ \int_E J(\omega) \, d\mathbb P_\mathbf{a}  =0.\]
Since $E$ has been arbitrarily chosen in $\mathcal F$ and
$\mathbf{a} >0$, we deduce in force of Lemma \ref{nullo}
\[J(\omega)= 0 \qquad\hbox{a.s.}\]
This information combined with \eqref{cod1}, \eqref{cod2} shows that
$\Xi^{j+1}$ is a $\tau^{j+1}$--cycle and conclude the proof.

$\hfill{\Box}$
\end{proof}

\smallskip

\section{Dynamical characterization of the Aubry set }\label{dynpropaub}

In this section we give  the main results of the paper on the cycle
characterization of the Aubry set.
\smallskip
As explained in the Introduction, a key step is to establish a
strengthened version of Theorem \ref{t18}, which is based on the
cycle iteration technique presented in Section \ref{reptcyc}.
\smallskip
\begin{theorem}\label{t20}
Given $\epsilon >0$, $\alpha \geq \beta$, and $y \in
\mathbb{T}^{N}$, $\mathbf{b} \in \mathcal{F}_{\alpha}(y) $ if and
only if
\begin{equation}\label{e57}
 \mathbb{E}_{k} \left  (\int_0^\tau L_{\omega(s)}(y + \mathcal I(\Xi)(s),-\Xi(s)) + \beta \, ds - b_{k}
+ b_{\omega(\tau)} \right )\geq 0,
\end{equation}
for any $k \in \{1, \cdots, m\}$, $\tau \gg \epsilon$ bounded  stopping times  and $\tau$--cycles $\Xi$.
\end{theorem}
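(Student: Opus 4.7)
One direction is immediate: if $\mathbf{b}\in\mathcal F_\alpha(y)$, Theorem \ref{t18} gives \eqref{e41} for every bounded stopping time and cycle, so in particular \eqref{e57} holds over the restricted class $\tau\gg\epsilon$. The substance is the converse, which I approach by producing, for each bounded $(\tau^0,\Xi^0)$, a family of companion $\tau\gg\epsilon$ cycles whose action inequalities \eqref{e57} imply \eqref{e41} for $(\tau^0,\Xi^0)$; a second appeal to Theorem \ref{t18} then concludes $\mathbf{b}\in\mathcal F_\alpha(y)$.

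Given $(\tau^0,\Xi^0)$, the companion family is built by first iterating through the construction of Section \ref{reptcyc} to produce $\tau^j$-cycles $\Xi^j$, and then appending a deterministic $\epsilon$-wait with zero control: $\tilde\tau^j:=\tau^j+\epsilon$, with $\tilde\Xi^j=\Xi^j$ on $[0,\tau^j)$ and $\tilde\Xi^j=0$ on $[\tau^j,\tilde\tau^j]$. Routine checks give $\tilde\tau^j\gg\epsilon$ (from $\{\tilde\tau^j\le t\}=\{\tau^j\le t-\epsilon\}\in\mathcal F_{t-\epsilon}$) and show $\tilde\Xi^j$ remains a $\tilde\tau^j$-cycle (since $\mathcal I(\tilde\Xi^j)$ vanishes on $[\tau^j,\tilde\tau^j]$, $\Xi^j$ being already a cycle). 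Applying \eqref{e57} at index $k$ to $(\tilde\tau^j,\tilde\Xi^j)$, setting $M:=e^{-A\tau^0}$ and using the multiplicativity $e^{-A\tilde\tau^j}=M^{j+1}e^{-A\epsilon}$ together with Theorem \ref{cod}, the inequality rewrites as $[S_j R_0]_k+E_j(k)\ge 0$. Here $S_j:=I+M+\cdots+M^j$; the vector $R_0(l):=\mathbb E_l\!\left[\int_0^{\tau^0}L_{\omega(s)}(y+\mathcal I(\Xi^0)(s),-\Xi^0(s))+\alpha\,ds\right]-b_l+(M\mathbf{b})_l$ is exactly what \eqref{e41} asks to be non-negative componentwise; and $E_j$ is a correction bounded uniformly in $j$, coming from the $\epsilon$-wait and the operator $e^{-A\epsilon}-I$. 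The telescoping identity $S_jT_0=S_jR_0+(I-M^{j+1})\mathbf{b}$, obtained from $T_0=R_0+(I-M)\mathbf{b}$, is what absorbs the boundary $\mathbf{b}$-terms into $R_0$.

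Dividing by $j+1$ and letting $j\to\infty$, $S_j/(j+1)$ converges to the projection onto the $1$-eigenspace of $M$. After a harmless pre-perturbation ensuring $\tau^0>0$ a.s., Proposition \ref{prop35} makes $M$ primitive, with a unique positive stationary distribution $\mathbf{a}$, and we obtain $\mathbf{a}\cdot R_0\ge 0$. The \textbf{main obstacle} I anticipate is that this procedure delivers only the characteristic-vector-weighted inequality $\mathbf{a}\cdot R_0\ge 0$, not the componentwise $R_0(k)\ge 0$ required by Theorem \ref{t18}. My plan to bridge the gap is to combine Corollary \ref{cornow}'s characteristic-vector characterization of subsolutions with the uniform positivity of $\mathbf{a}$ (Remark \ref{min} and Proposition \ref{corollozzo}, available precisely because $\tilde\tau^j\gg\epsilon$), and then directly construct a critical subsolution $\mathbf{u}$ with $\mathbf{u}(y)=\mathbf{b}$, sidestepping \eqref{e41} altogether; this construction is expected to be the content of the previously announced Lemma \ref{post21}, in which Corollary \ref{cornow} was flagged as playing a key role.
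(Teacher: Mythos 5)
Your forward direction matches the paper exactly. For the converse, however, your plan genuinely diverges from the paper's, and the gap you flag as the ``main obstacle'' is fatal in the form you propose.

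The paper does not try to prove the converse directly (showing \eqref{e41} componentwise for every bounded $\tau^0$). It argues by contraposition: if $\mathbf b \notin \mathcal F_\alpha(y)$, Theorem \ref{t18} supplies some $i$, $\tau^0$, $\Xi^0$ with $\mathbb E_i[\cdots]=-\mu<0$; one then replaces $\tau^0$ and $\Xi^0$ by their restrictions to $\mathcal D_i$ (which leaves $\mathbb E_i$ unchanged since $\mathbb P_i$ is supported there, and makes $R_0(l)=0$ for $l\neq i$ in your notation), builds $\tau_n \gg 1/n$ in $\mathcal D_i$, and invokes Lemma \ref{preprese}. That lemma is the engine: after localization the iterated increment $K_j$ equals $(e^{-A\tau^j})_{ii}\,(-\mu)$ and is bounded above by $-\rho\mu<0$ uniformly in $j$ (via Propositions \ref{cedro} and \ref{corollozzo}), so the \emph{unnormalized} action $I_j$ diverges to $-\infty$. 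Appending the $\epsilon$-wait to produce $\widetilde\tau^j\gg\epsilon$ then only adds a bounded term $V_j$, which cannot save \eqref{e57}. No Cesàro averaging is involved and no stationary distribution is invoked; indeed, after localization $M=e^{-A\tau^0}$ is far from primitive, and the corresponding stationary vector would assign weight $0$ to the only nonzero component of $R_0$, rendering your weighted inequality $\mathbf a\cdot R_0\ge0$ vacuous.

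Your plan gets only as far as $\mathbf a\cdot R_0\geq 0$, which you acknowledge, but the proposed bridge does not exist in the form you sketch: Lemma \ref{post21} (and Corollary \ref{cornow}, Remark \ref{min}) belong to the deduction of Theorem \ref{t21} from Theorem \ref{t19}, i.e., they translate componentwise information into characteristic-vector form for points \emph{already known to be in $\mathcal A$}; they do not run in the direction you need, and there is no ``direct construction of a critical subsolution'' anywhere in the paper's argument for Theorem \ref{t20}. There is also a smaller but real issue in the preparatory step: Proposition \ref{prop35} concerns deterministic times only, so ``$\tau^0>0$ a.s.\ makes $M$ primitive'' needs a separate argument (the paper sidesteps this entirely since it never forms a Cesàro limit). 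The missing idea is the localization-to-$\mathcal D_i$ reduction combined with the quantitative lower bound $(e^{-A\tau^j})_{ii}>\rho$ from Proposition \ref{corollozzo}; without it, the averaged inequality cannot be upgraded to the componentwise one that Theorem \ref{t18} requires.
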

We break the argument in two parts. The first one is  presented in a
preliminary lemma.

\smallskip
\begin{lemma}\label{preprese} Let $i \in \{1, \cdots, m\}$, $\b \in \mathbb R^m$, $\delta >0$,  assume $\tau^0$  to be a simple stopping time vanishing outside $\mathcal D_i$, with  $\tau^0 \gg \delta$  in $\mathcal D_i$ satisfying
\[\mathbb E_i \left  (\int_0^{\tau^0} L_{\omega(s)}(y + \mathcal
I(\Xi^0)(s),-\Xi^0(s)) + \beta \, ds - b_i + b_{\omega(\tau^0)} \right )
=: - \mu < 0. \] Then for any $j \in \mathbb N$
\begin{equation}\label{crux}
  \mathbb E_i \left  (\int_0^{\tau^j} L_{\omega(s)}(y + \mathcal I(\Xi^j)(s),-\Xi^j(s)) + \beta \, ds -
b_i + b_{\omega(\tau^j)} \right ) < - \mu \, ( 1+ \rho \, j ),
\end{equation}
where  $\tau^j$, $\Xi^j$ are as in \eqref{buckle1}, \eqref{cape1},
respectively, and $\rho$ is the positive constant, provided by
Proposition \ref{corollozzo}, with
\[ \left ( e^{- A \tau} \right )_{ik} > \rho \quad\hbox{for any $\tau \gg
\delta$ in $\mathcal D_i$, $k= 1, \cdots, m$.}\]
  \end{lemma}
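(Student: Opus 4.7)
The plan is to argue by induction on $j$, using Theorem \ref{cod} to split the cost integral at time $\tau^j$ and Theorem \ref{flowflow} to transport the piece over $[\tau^j,\tau^{j+1}]$ back to an expectation with respect to $\mathbb P_{\mathbf{e}_i\,e^{-A\tau^j}}$. I would set
\[
\Phi_j \;:=\; \mathbb E_i\!\left(\int_0^{\tau^j} L_{\omega(s)}\!\bigl(y+\mathcal I(\Xi^j)(s),-\Xi^j(s)\bigr)+\beta\,ds - b_i + b_{\omega(\tau^j)}\right),
\]
so that the base $\Phi_0=-\mu$ is simply the hypothesis, and Proposition \ref{cedro} supplies that $\tau^j \gg \delta$ in $\mathcal D_i$ for every $j$.

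For the inductive step I would split $\int_0^{\tau^{j+1}}=\int_0^{\tau^j}+\int_{\tau^j}^{\tau^{j+1}}$. By the very definition \eqref{cape1} the first piece agrees exactly with the cost of $\Xi^j$. Because $\Xi^j$ is a $\tau^j$-cycle one has $\mathcal I(\Xi^{j+1})(\tau^j)=0$ a.s., hence on $[\tau^j,\tau^{j+1}]$ one has $\mathcal I(\Xi^{j+1})(s)=\mathcal I(\Xi^0)(\phi_{\tau^j}(\omega))(s-\tau^j(\omega))$. The change of variable $t=s-\tau^j(\omega)$ together with Lemma \ref{precod} turns the second piece into
\[
\int_0^{\tau^0(\phi_{\tau^j}(\omega))} L_{\phi_{\tau^j}(\omega)(t)}\!\bigl(y+\mathcal I(\Xi^0)(\phi_{\tau^j}(\omega))(t),-\Xi^0(\phi_{\tau^j}(\omega))(t)\bigr)+\beta\,dt,
\]
and analogously $b_{\omega(\tau^{j+1})}=b_{\phi_{\tau^j}(\omega)(\tau^0(\phi_{\tau^j}(\omega)))}$. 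Taking $\mathbb E_i$ and invoking the change-of-variables formula with $\phi_{\tau^j}\#\mathbb P_i=\mathbb P_{\mathbf{e}_i e^{-A\tau^j}}$ from Theorem \ref{flowflow} would rewrite the second-piece contribution (together with the $b_{\omega(\tau^{j+1})}$ term) as
\[
\sum_{k=1}^m (e^{-A\tau^j})_{ik}\;\mathbb E_k\!\left(\int_0^{\tau^0} L_{\omega(s)}\!\bigl(y+\mathcal I(\Xi^0)(s),-\Xi^0(s)\bigr)+\beta\,ds+b_{\omega(\tau^0)}\right).
\]

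The key observation is that $\tau^0$ vanishes outside $\mathcal D_i$, so for $k\neq i$ the $k$--th summand collapses to $b_k$, whereas for $k=i$ it equals $-\mu+b_i$ by hypothesis. Recognizing $\sum_k(e^{-A\tau^j})_{ik}\,b_k=\mathbb E_i b_{\omega(\tau^j)}$ via \eqref{e58}, the whole expression reorganizes into the one-step recurrence
\[
\Phi_{j+1} \;=\; \Phi_j \,-\, \mu\,(e^{-A\tau^j})_{ii}.
\]
Since $\tau^j\gg\delta$ in $\mathcal D_i$, Proposition \ref{corollozzo} gives $(e^{-A\tau^j})_{ii}>\rho$, so an immediate induction starting from $\Phi_0=-\mu$ delivers $\Phi_j<-\mu(1+\rho\,j)$ for every $j\geq 1$, which is \eqref{crux}. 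I expect the delicate step to be the identification of $\mathcal I(\Xi^{j+1})$ on $[\tau^j,\tau^{j+1}]$ with a time-shifted copy of $\mathcal I(\Xi^0)$, and the careful reindexing of $\omega$ through the shift flow so that Theorem \ref{flowflow} can be applied cleanly; the rest reduces to bookkeeping using the tools developed in Sections \ref{propstoptim}--\ref{reptcyc}.
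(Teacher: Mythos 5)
Your proposal is correct and follows essentially the same route as the paper: induction on $j$, splitting the integral at $\tau^j$, using Lemma \ref{precod} and Theorem \ref{flowflow} to rewrite the excess piece as an expectation under $\mathbb P_{\mathbf e_i e^{-A\tau^j}}$, collapsing the sum via the vanishing of $\tau^0$ outside $\mathcal D_i$, and invoking Propositions \ref{cedro} and \ref{corollozzo} to bound $(e^{-A\tau^j})_{ii}>\rho$. Your packaging as the exact one-step recurrence $\Phi_{j+1}=\Phi_j-\mu(e^{-A\tau^j})_{ii}$ is a slightly cleaner rendering of the paper's $I_{j+1}=I_j+K_j$ bookkeeping, but the argument is the same.
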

\begin{proof}
 We denote by $I_j$  the expectation in the left hand--side of \eqref{crux} and argue by
  induction on $j$. Formula \eqref{crux} is true for
$j=0$, and we assume by inductive step that it holds for some $j
\geq 1$. Taking into account that
\[ \Xi^{j+1}(\omega)(s)= \Xi^j(\omega)(s) \qquad\hbox{in $[0,\tau^j(\omega))$ for
any $\omega$,}\] we get by applying the inductive step
\begin{equation}\label{crux0}
     I_{j+1} = I_j +  K_j \leq - \mu \, (
1+ \rho \, j ) + K_j
\end{equation}
 with
\[K_j= \mathbb E_i \left  (\int_{\tau^j}^{\tau^{j+1}} L_{\omega(s)}(y + \mathcal
I(\Xi^{j+1})(s),-\Xi^{j+1}(s)) + \beta \, ds -  b_{\omega(\tau^j)} +
b_{\omega(\tau^{j+1})} \right ).\]
   We further get by applying Lemma
\ref{precod} and the very definition of $\Xi^{j+1}$
\begin{equation}\label{crux1}
    K_j = \mathbb E_i\big( W(\omega)\big) +  \mathbb E_i  (- b_{\omega(\tau^j)} + b_{\omega(\tau^j +
\tau^0(\phi_{\tau^j}))})
\end{equation}
where
\[W(\omega) = \int_{\tau^j}^{\tau^j + \tau^0(\phi_{\tau^j})} L_{\omega(s)}(y + \mathcal
I(\Xi^0(\phi_{\tau^j}))(s - \tau^j),-\Xi^0(\phi_{\tau^j})(s -
\tau^j)) + \beta \, ds.\]
We fix $\omega$ and set $t =s - \tau^j(\omega)$, we
have
\[ W(\omega) =   \int_{0}^{\tau^0(\phi_{\tau^j}(\omega))}
L_{\phi_{\tau^j}(\omega)(t)}(y + \mathcal
I(\Xi^0(\phi_{\tau^j}(\omega)))(t),-\Xi^0(\phi_{\tau^j}(\omega))(t))
+ \beta \, dt. \] By using the above  relation and change of
variable  formula( from $\phi_{\tau^j}(\omega)$ to $\omega$), and
Theorem \ref{flowflow}, we obtain
\begin{equation}\label{crux2}
    \mathbb E_i W(\omega) = \mathbb E_{\mathbf{e}_i e^{-A \tau^j}} \left (\int_0^{\tau^0} L_\omega (y+ \mathcal
I(\Xi^0(\omega)), - \Xi^0(\omega))  + \beta \, ds \right )
\end{equation}
We also have by applying the same change of variable
\begin{eqnarray*}
  \mathbb E_i  \left (- b_{\omega(\tau^j)} + b_{\omega(\tau^j +
\tau^0(\phi_{\tau^j}))} \right ) &=&\mathbb E_i  \left (-
b_{\phi_{\tau^j}(\omega)(0)} + b_{\phi_{\tau^j}(\omega)(\tau^0)} \right ) \\
   &=& \mathbb E_{\mathbf{e}_i e^{-A \tau^j}}  \left (- b_{\omega(0)} + b_{\omega(\tau^0)}
\right ).
\end{eqnarray*}
By using the above relation, \eqref{crux1}, \eqref{crux2} and the
fact that $\tau^0$ vanishes outside $\mathcal D_i$, we obtain
\begin{eqnarray*}
  K_j &=& \mathbb E_{{\mathbf{e}_i e^{-A \tau^j}}} \left  (\int_0^{\tau^0} L_{\omega(s)}(y + \mathcal I(\Xi^0)(s),-\Xi^0(s)) + \beta \, ds -
b_i + b_{\omega(\tau^0)} \right) \\
&=& \left (e^{-A \tau^j}\right )_{ii} \, \mathbb E_i \left (\int_0^{\tau^0}
 L_{\omega(s)}(y + \mathcal I(\Xi^0)(s),-\Xi^0(s)) + \beta \, ds -
b_i + b_{\omega(\tau^0)} \right ) \\ &<& - \rho \, \mu
 \end{eqnarray*}
By plugging this relation in \eqref{crux0}, we end up with
\[I_{j+1}  \leq - \mu \, ( 1+ \rho \, (j+1) ) \]
proving \eqref{crux}.

 $\hfill{\Box}$
 \end{proof}

\medskip

\begin{proof} (of Theorem \ref{t20}) .\\
The first implication is direct by Theorem \ref{t18}.\\
Conversely, if $\mathbf b \notin \mathcal{F}_{\beta}(y)$ then there
exists, by Theorem \ref{t18}, $i \in \{1, \cdots, m\}$, bounded
stopping time $\tau^0$ and $\tau^0$--cycle $\Xi^0$ such that
\begin{equation} \label{pres1}
 \mathbb{E}_{i} \left  (\int_0^{\tau^0} L_{\omega(s)}(y + \mathcal I(\Xi^0)(s),-\Xi^0(s)) + \beta \, ds - b_{i}
+ b_{\omega(\tau^0)} \right ) =: - \mu  < 0.
\end{equation}
We can also assume $\tau^0 = 0$  outside $\mathcal D_i$ without
affecting \eqref{pres1}.  We set
 \[\widetilde \Xi(\omega)(s) = \left \{
\begin{array}{ll}
    \Xi^0(\omega)(s), & \hbox{for $\omega \in \mathcal{D}$, $s \in [0,\tau^{0}(\omega))$} \\
    0, & \hbox{for $\omega \in \mathcal D$, $s \in [\tau^0(\omega),+\infty)$.} \\
\end{array}
\right.\]
 We claim that $\widetilde \Xi$ is still a $\tau^0$--cycle; the unique property requiring
some detail is actually the nonanticipating character. We take $\omega_1=\omega_2$ in $[0,t]$,
for some positive $t$, and consider two possible cases:\\

\smallskip

\noindent \underline{\textbf{Case 1:}} If $s:=\tau^0(\omega_1)\leq
t$ then
$$\omega_1 \in A:=\{\omega \mid \tau^{0}(\omega)=s\} \in \F_s \subseteq \F_t,$$
which yields $\omega_2 \in A$ and hence $\tau^0(\omega_1)=\tau^0(\omega_2)=s$.\\
In this case
\begin{align*}
& \widetilde \Xi(\omega_1)= \Xi^0(\omega_1)= \Xi^0(\omega_2)=\widetilde\Xi(\omega_2)  \qquad\hbox{in $[0,s]$}, \\
&\widetilde\Xi(\omega_1)= \widetilde\Xi (\omega_2)=0
  \qquad\hbox{in $[s,t]$}.
\end{align*}
\smallskip
\noindent \underline{\textbf{Case 2:}} If $\tau^0(\omega_1)> t$,
then
$$\omega_1 \in \{\omega \mid \tau^{0}(\omega)>t\} \in \F_t ,$$
which implies that $\omega_{2}$ belongs to the above set and
consequently $\tau^0(\omega_2)> t$. Therefore
$$\widetilde\Xi(\omega_1)= \Xi^0(\omega_1)= \Xi^0(\omega_2)=\widetilde\Xi(\omega_2)  \qquad\hbox{in $[0,t]$}.$$
This shows the claim. Therefore we can assume that the $\Xi^0$
appearing in \eqref{pres1} vanishes when $t \geq \tau^0(\omega)$ for
any $\omega$.

 We know from Proposition \ref{stoppapproxi} that there is a nonincreasing sequence
$\tau'_n$ of simple stopping times with
\[  \tau'_n  \to \tau^0 \quad\hbox{uniformly in $\mathcal D$.}\]
We define
\[ \tau_n = \left \{\begin{array}{cc}
                       \tau'_n + \frac 1n & \;\hbox{in $\mathcal D_i$} \\
                      0 &  \;\quad\qquad\quad\;\hbox{in $\mathcal D_k$ for $k \neq i$} \\
                     \end{array} \right .\]
The $\tau_n$ are simple stopping times, moreover, since $\tau^0$ is
vanishing outside $\mathcal D_i$ and $\frac 1n \to 0$, we get
\[ \tau_n \geq \tau^ 0 \quad\hbox{and} \quad \tau_n  \to \tau^0 \quad\hbox{uniformly in $\mathcal D$,}\]
in addition
\[\{\tau_n \leq t\} \cap \mathcal D_i = \{\tau'_n  + 1/n \leq t\} \cap
\mathcal D_i \in \mathcal F_{t - 1/n} \quad\hbox{for $t \geq \frac {1}{n}$}\] which
shows that
\[\tau_n \gg \frac 1n \quad\hbox{in $\mathcal D_i$.}\]
It is clear that $\widetilde \Xi$ belongs to $\K(\tau_{n},0)$, we
further have
\begin{eqnarray*}
 \left | \int_0^{\tau_n} L_{\omega(s)}( y+\mathcal I(\Xi^0),- \Xi^0) \, ds
 -\int_0^{\tau^0} L_{\omega(s)}( y+\mathcal I(\Xi^0),-\Xi^0) \, ds  \right |\leq
 \int_{\tau^0}^{\tau_n }  |L_{\omega(s)}( y,0)| \, ds.
\end{eqnarray*}
Owing to the boundedness property of the integrand and that $\tau_n
\rightarrow \tau^0$, the right hand-side of the above formula
becomes infinitesimal, as $n$ goes to infinity.
Therefore the strict
negative inequality in (\ref{pres1}) is maintained replacing
  $\tau^0$
   by $\tau_n$, for a suitable $n$.

   Hence we can assume, without loss of generality,
   that $\tau^0$ appearing in \eqref{pres1} satisfies the assumptions of Lemma
\ref{preprese} for a suitable $\delta >0$.
Let $\tau^j$, $\Xi^j$ be as in \eqref{buckle1}, \eqref{cape1}, we
define for any $j$
\[  \widetilde\tau^j  = \tau^{j} + \epsilon \]
and
\[\widetilde \Xi^j(\omega)(s) = \left \{ \begin{array}{cc}
                      \Xi^{j}(\omega)(s) & \quad\hbox{for $s \in [0, \tau^{j}(\omega))$} \\
                      0 &   \quad\hbox {for $s \in [\tau^{j}(\omega),\widetilde\tau^{j}(\omega))$}
                    \end{array} \right .\]
the $\widetilde\tau^j$ are apparently  stopping times with
$\widetilde\tau^j \gg \epsilon$, and that $\widetilde \Xi^j$ are
$\widetilde\tau^j$--cycles. \\
We have
\[ \mathbb E_i \left  (\int_0^{\widetilde\tau^j} L_{\omega(s)}(y + \mathcal
I(\widetilde \Xi^j)(s),-\widetilde \Xi^j(s)) + \beta \, ds - b_i +
b_{\omega(\widetilde\tau^j)} \right )= U_j + V_j\] with
\begin{eqnarray*}
  U_j &=& \mathbb E_i \left (\int_0^{\tau^{j}} L_{\omega(s)}(y + \mathcal I(\Xi^{j})(s),-\Xi^{j}(s)) + \beta \, ds -
b_i + b_{\omega(\tau^{j})} \right ) \\
  V_j &=& \mathbb E_i \left  (\int_{\tau^{j}}^{\tau^{j} +\epsilon}
L_{\omega(s)}(y,0) + \beta  \, ds  - b_{\omega(\tau^{j})} +b_{\omega(\tau^{j}+
\epsilon)} \right )
\end{eqnarray*}
The term $U_j$ diverges negatively as $j \to + \infty$ by Lemma
\ref{preprese},  while $V_j$ stays bounded, which implies
\[ \mathbb E_i \left  (\int_0^{\widetilde\tau^j} L_{\omega(s)}(y + \mathcal
I(\widetilde \Xi^j)(s),-\widetilde \Xi^j(s)) + \beta \, ds - b_i +
b_{\omega(\widetilde\tau^j)} \right ) < 0\] for $j$ large. Taking
into account that $\widetilde \tau^j \gg \eps$ and Theorem
\ref{t18}, the last inequality shows that stopping times strongly
greater than $\eps$ and corresponding cycles based at $y$ are
sufficient to characterize values $\mathbf b \not\in \mathcal
F_\beta(y)$. This concludes the proof.

 $\hfill{\Box}$
\end{proof}

\medskip

   Next we state and prove  the first main theorem.

   \smallskip

 \begin{theorem}\label{t19}
 Given $\epsilon >0$,  $y \in \mathbb T^N$, $\mathbf b \in \R^m$, we consider
 \begin{equation}\label{e43}
  \inf \, \mathbb E_i
 \left [ \int_0^\tau L_{\omega(s)}(y +\mathcal I(\Xi),-\Xi) + \beta \, ds  - b_{\omega(0)}+b_{\omega(\tau)} \right ]
\end{equation}
where the infimum is taken with respect to any bounded stopping
times $\tau \gg \epsilon$ and $\tau$--cycles $ \Xi$.  The following
properties are equivalent:
\begin{itemize}
    \item[(i)] $y \in \mathcal A$
    \item[(ii)]  the infimum in \eqref{e43} is zero for any index $i$, any
    $ \mathbf b \in F_\beta(y)$
    \item[(iii)]  the infimum in \eqref{e43} is zero for some $i$, any
    $ \mathbf b \in F_\beta(y)$.
\end{itemize}
\end{theorem}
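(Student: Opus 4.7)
My plan is to prove the cyclic chain $(i) \Rightarrow (ii) \Rightarrow (iii) \Rightarrow (i)$. The implication $(ii) \Rightarrow (iii)$ is tautological, and the remaining two rest on a common algebraic device coupled with Theorem \ref{t20}. Abbreviating
\[
I_k(\tau, \Xi, \mathbf b) := \mathbb E_k\!\left[\int_0^\tau L_{\omega(s)}\bigl(y+\mathcal I(\Xi),-\Xi\bigr) + \beta \, ds - b_k + b_{\omega(\tau)}\right],
\]
the quantity under the infimum in \eqref{e43} equals $I_i(\tau, \Xi, \mathbf b)$ since $\omega(0) = i$ almost surely under $\mathbb P_i$. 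Expanding the expectation and using $\mathbb P_k(\omega(\tau)=i) = (e^{-A\tau})_{ki}$ from \eqref{e58}, one obtains the pivotal identity
\[
I_k(\tau, \Xi, \mathbf b + \mu \mathbf e_i) \;=\; I_k(\tau, \Xi, \mathbf b) + \mu\bigl((e^{-A\tau})_{ki} - \delta_{ki}\bigr).
\]
Combined with Proposition \ref{corollozzo} and Remark \ref{corollozzobis}, which furnish a $\rho > 0$ depending only on $\epsilon$ and the coupling matrix with $(e^{-A\tau})_{ki} > \rho$ and $1-(e^{-A\tau})_{kk} \geq \rho$ for all $k$ whenever $\tau \gg \epsilon$, this identity produces the sign dichotomy driving both directions. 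Note also that Theorem \ref{t20} immediately yields $\inf \geq 0$ whenever $\mathbf b \in F_\beta(y)$, so in both (ii) and (iii) only upper bounds on the infimum need be addressed.

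For $(iii) \Rightarrow (i)$, I would argue by contraposition. If $y \notin \mathcal A$, Proposition \ref{prop30} produces $\mathbf b$ in the interior of $F_\beta(y)$, whence for each $i$ some $\lambda > 0$ makes $\mathbf b + \lambda \mathbf e_i$ admissible. Applying Theorem \ref{t20} to $\mathbf b + \lambda \mathbf e_i$ with $k=i$ and then rearranging the identity gives
\[
I_i(\tau, \Xi, \mathbf b) \;\geq\; \lambda\bigl(1-(e^{-A\tau})_{ii}\bigr) \;\geq\; \lambda \rho \;>\; 0,
\]
uniformly in $\tau \gg \epsilon$ and $\tau$--cycles $\Xi$. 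Hence the infimum in \eqref{e43} is strictly positive for \emph{every} $i$, contradicting $(iii)$.

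For $(i) \Rightarrow (ii)$, I would assume $y \in \mathcal A$ and fix an index $i$. By Proposition \ref{prop28}, $F_\beta(y) = \mathbf b_0 + \R\,\mathbf 1$, and since adding a multiple of $\mathbf 1$ to $\mathbf b$ leaves $-b_k + b_{\omega(\tau)}$ unchanged, the infimum depends only on the coset and I may fix $\mathbf b = \mathbf b_0$. For any $\mu > 0$ the vector $\mathbf b + \mu \mathbf e_i$ lies outside $F_\beta(y)$ (as $\mathbf e_i$ is not a multiple of $\mathbf 1$ when $m \geq 2$), so Theorem \ref{t20} supplies some $k$, some $\tau \gg \epsilon$ and some $\tau$--cycle $\Xi$ with $I_k(\tau, \Xi, \mathbf b + \mu \mathbf e_i) < 0$. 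The core step is to force $k = i$: were $k \neq i$, the identity would yield $I_k(\tau, \Xi, \mathbf b) < -\mu (e^{-A\tau})_{ki} \leq -\mu \rho < 0$, contradicting $\mathbf b \in F_\beta(y)$ through Theorem \ref{t20}. With $k = i$ thus forced, the identity delivers $I_i(\tau, \Xi, \mathbf b) < \mu\bigl(1 - (e^{-A\tau})_{ii}\bigr) \leq \mu$; sending $\mu \to 0^+$ yields $\inf \leq 0$, and combined with $\inf \geq 0$ we conclude equality.

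The principal obstacle, in my view, is precisely this forcing $k = i$ in the $(i) \Rightarrow (ii)$ step; everything else is clean algebra. This is exactly where the strengthened Theorem \ref{t20} is indispensable over the weaker Theorem \ref{t18}: only under the restriction $\tau \gg \epsilon$ is $(e^{-A\tau})_{ki}$ uniformly bounded away from zero for $k \neq i$, and without that positivity the dichotomy in the $k \neq i$ case would collapse and the argument break down.
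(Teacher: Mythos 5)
Your proposal is correct, and it is built on the same arsenal the paper uses: the perturbation identity under $\mathbf b \mapsto \mathbf b + \mu\,\mathbf e_i$, Theorem \ref{t20}, the uniform bound $\rho$ furnished by Proposition \ref{corollozzo} and Remark \ref{corollozzobis}, and Propositions \ref{prop28}, \ref{prop30}. The logical scaffolding, however, is a mirror image of the paper's. For (i)$\Rightarrow$(ii) the paper argues by contradiction: assuming the infimum equals some $\eta>0$, it checks that the expectations $\mathbb E_j[\cdots]$ for $\mathbf b + \nu\mathbf e_i$ stay nonnegative for \emph{every} index $j$ (using $\eta$ to handle $j=i$ and $\mathbf b\in F_\beta(y)$ plus nonnegativity of $(e^{-A\tau})_{ji}$ to handle $j\neq i$), concludes $\mathbf b+\nu\mathbf e_i \in F_\beta(y)$ via Theorem \ref{t20}, and contradicts Proposition \ref{prop28}. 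You instead go directly: you extract from $\mathbf b+\mu\mathbf e_i\notin F_\beta(y)$ a negative witness index $k$ via Theorem \ref{t20}, and then \emph{force} $k=i$ using $(e^{-A\tau})_{ki}>\rho$. That forcing step is a genuine local innovation absent from the paper, which sidesteps the question of which index carries the negativity by verifying nonnegativity across the board. For (iii)$\Rightarrow$(i) you take the contrapositive (from $y\notin\mathcal A$, get an interior point of $F_\beta(y)$, bound the infimum below by $\lambda\rho>0$), whereas the paper works forward from the vanishing infimum to empty interior of $F_\beta(y)$ and then invokes Proposition \ref{prop30}; the two are equivalent. Your closing remark is also accurate: the $\tau\gg\eps$ hypothesis, via Proposition \ref{corollozzo}, is exactly what makes $(e^{-A\tau})_{ki}$ uniformly positive and is indispensable for the forcing step and for the lower bound $1-(e^{-A\tau})_{ii}\geq\rho$ in the reverse implication.
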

\smallskip

The assumption that the stopping times involved in the infimum are
strongly greater than a positive constant is essentially used for
proving (iii) $\Rightarrow$ (i), while in the implication (i)
$\Rightarrow$ (ii) it is exploited the characterization of
admissible values provided in Theorem \ref{t20}.

\smallskip

\begin{proof}

We start proving the implication (i) $\Rightarrow$ (ii). \\
Let  $y \in \mathcal{A}$, assume to the contrary that
$$ \inf \limits_{\tau \gg \epsilon} \, \mathbb E_{i}
 \left [ \int_0^\tau L_{\omega(s)}(y +\mathcal I(\Xi),-\Xi) + \beta \, ds - b_{\omega(0)}
+ b_{\omega(\tau)} \right ] \neq 0  \quad\hbox{for some $i$ and
$\mathbf b \in F_\beta(y)$.} $$ We deduce from Theorem \ref{t18}
that
\begin{equation} \label{e45}
 \inf \limits_{\tau \gg \epsilon} \, \mathbb E_{i}
 \left [ \int_0^\tau L_{\omega(s)}(y +\mathcal I(\Xi),-\Xi) + \beta \, ds - b_{\omega(0)}
+ b_{\omega(\tau)} \right ] > 0
\end{equation}
for such  $i$, $\mathbf b$. We claim that   $\mathbf b +\nu \,
\mathbf e_{i} \in \mathcal {F}_{\beta}(y)$ for any positive $\nu$
less than the infimum in \eqref{e45} denoted by $\eta$. Taking into
account (\ref{e45}) and $e^{-A \tau}$ being stochastic, we have for
any stopping time $\tau \gg \eps$ and $\tau$--cycle $\Xi$
\begin{eqnarray*}
   && \mathbb E_{i}\left [ \int_0^\tau L_{\omega(s)}(y +\mathcal I(\Xi),-\Xi) + \beta \, ds - (\mathbf b +\nu \,
   \mathbf e_{i})_{\omega(0)}+ (\mathbf b +\nu \, \mathbf e_{i})_{\omega(\tau)} \right ] \\
   &=& \mathbb E_{i}\left [ \int_0^\tau L_{\omega(s)}(y +\mathcal I(\Xi),-\Xi) + \beta \, ds - b_{\omega(0)}+ b_{\omega(\tau)} \right ]-\nu+\nu \, \mathbf e_{i}\, e^{-A \tau} \cdot \mathbf e_{i} \\
  &\geq& \eta -\nu \geq 0.
\end{eqnarray*}
We further get  for $j \neq i$ in force of  Theorem \ref{t18},
\begin{eqnarray*}
   && \mathbb E_{j}\left [ \int_0^\tau L_{\omega(s)}(y +\mathcal I(\Xi),-\Xi) + \beta \, ds - (\mathbf b +\nu \,
   \mathbf e_{i})_{\omega(0)}+ (\mathbf b +\nu \, \mathbf e_{i})_{\omega(\tau)} \right ] \\
   &=& \mathbb E_{j}\left [ \int_0^\tau L_{\omega(s)}(y +\mathcal I(\Xi),-\Xi) + \beta \, ds
   - b_{\omega(0)}+ b_{\omega(\tau)} \right ]-\nu \, \mathbf e_{j} \cdot \mathbf  e_{i}+\nu \, \mathbf e_{j}\,
   e^{-A \tau} \cdot \mathbf e_{i} \\
  &\geq& 0.
\end{eqnarray*}
Combining the information from the above computations with Theorem
\ref{t20}, we get the claim, reaching a contradiction with  $y$
being in $\mathcal{A}$ via Proposition \ref{prop28}.

\smallskip

It is trivial that (ii) implies (iii). We  complete the proof
showing  that (iii) implies  (i). Let us assume that \eqref{e43} is
vanishing  for some $i$ and any $\mathbf b \in F_\beta(y)$.

For any positive constant $\nu$, select $\delta > 0$ satisfying
$\delta - \rho \, \nu < 0$, where  $\rho > 0$ is given by
Proposition \ref{corollozzo}. Notice that we can invoke  Proposition
\ref{corollozzo}  because   we are working  with stopping times
strongly greater than $\eps$.   We fix $\mathbf b \in F_\beta(y)$
and deduce from \eqref{e43} being zero  that there exist a bounded
stopping time $\tau \gg \epsilon$, and  a $\tau$--cycle $ \Xi$ with
 \begin{equation} \label{e48}
  \mathbb E_{i}
 \left [ \int_0^\tau L_{\omega(s)}(y +\mathcal I(\Xi),-\Xi) + \beta \, ds - b_{\omega(0)}+b_{\omega(\tau)}\right ] <
 \delta.
 \end{equation}
 Taking into account Remark
\ref{corollozzobis} and \eqref{e48}, we have
 \begin{eqnarray*}
   && \mathbb E_{i}\left [ \int_0^\tau L_{\omega(s)}(y +\mathcal I(\Xi),-\Xi) + \beta \, ds - (b +\nu \, \mathbf  e_{i})_{\omega(0)}+ (b +\nu \, \mathbf e_{i})_{\omega(\tau)} \right ] \\
   &=& \mathbb E_{i}\left [ \int_0^\tau L_{\omega(s)}(y +\mathcal I(\Xi),-\Xi) + \beta \, ds - b_{\omega(0)}+ b_{\omega(\tau)} \right ]-\nu+\nu \, \mathbf  e_{i}\, e^{-A \tau} \cdot  \mathbf e_{i} \\
  &<& \delta - \rho \, \nu \\
  &<& 0
\end{eqnarray*}
which proves that $\mathbf{b}+\nu \, \mathbf e_{i} \notin \mathcal
{F}_{\beta}(y)$, in view of Theorem \ref{t18}.  This proves that
$\mathbf b$, arbitrarily taken in $F_\beta(y)$, is not an internal
point, and consequently that the interior of $F_\beta(y)$ must be
empty. This in turn implies that $y \in \mathcal A$ in force of
Proposition \ref{prop30}.
 $\hfill{\Box}$
\end{proof}

\medskip

Using expectation operators related to characteristic vectors of
stopping times, we have a more geometric formulation of the cycle
characterization provided in Theorem \ref{t19}, without any
reference to admissible values for critical subsolutions. This is
our second main result.

\smallskip

\begin{theorem}\label{t21}
 Given $\epsilon >0$, $y \in \mathcal{A}$ if and only if
 \begin{equation}\label{e43bis}
  \inf \, \mathbb E_{\mathbf{a}}
 \left [ \int_0^\tau L_{\omega(s)}(y +\mathcal I(\Xi),-\Xi) + \beta \, ds \right ]=0
\end{equation}
 where the infimum is taken with respect to any bounded stopping
times $\tau \gg \epsilon$, $\tau$--cycles $ \Xi$ and $\mathbf a$
characteristic vector of $\tau$.
\end{theorem}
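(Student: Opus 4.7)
The proof will hinge on the identity, valid whenever $\mathbf a$ is a characteristic vector of $\tau$ by Remark \ref{now}:
\[ \mathbb E_{\mathbf a}\!\left[\int_0^\tau L_{\omega(s)}(y + \mathcal I(\Xi),-\Xi) + \beta \, ds\right] = \sum_i a_i\, \mathbb E_i\!\left[\int_0^\tau L_{\omega(s)}(y + \mathcal I(\Xi),-\Xi) + \beta \, ds - b_i + b_{\omega(\tau)}\right], \]
for any $\mathbf b \in \R^m$, obtained by adding and subtracting $b_{\omega(0)}$, $b_{\omega(\tau)}$ and using $\mathbb E_{\mathbf a} b_{\omega(\tau)} = \mathbf a \cdot \mathbf b = \mathbb E_{\mathbf a} b_{\omega(0)}$. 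Since $F_\beta(y)$ is always nonempty, fixing any $\mathbf b \in F_\beta(y)$ and applying Theorem \ref{t20} makes each summand on the right nonnegative, so the infimum in \eqref{e43bis} is automatically $\geq 0$ for every $y$. Both implications will amount to deciding whether this infimum is strictly positive or exactly zero.

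For the direction $(\Rightarrow)$, I would assume $y \in \A$ and fix $\mathbf b \in F_\beta(y)$ together with $\delta > 0$. By Theorem \ref{t19}(ii), each index $i$ admits a bounded stopping time $\sigma_i \gg \epsilon$ and a $\sigma_i$-cycle $\Theta_i$ with the corresponding expectation $< \delta$. I would then paste these into a single pair by setting $\tau(\omega) = \sigma_j(\omega)$ and $\Xi(\omega) = \Theta_j(\omega)$ whenever $\omega \in \mathcal D_j$. Since $\mathcal D_j \in \F_0 \subset \F_{t-\epsilon}$ for every $j$ and every $t \geq \epsilon$, the pasted $\tau$ remains a stopping time $\gg \epsilon$; the nonanticipating and cycle properties of $\Xi$ transfer from the $\Theta_j$ because paths coinciding on $[0,t]$ share their initial index. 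Letting $\mathbf a$ denote the unique positive characteristic vector of $\tau$, the bridge identity gives $\mathbb E_{\mathbf a}[\,\cdot\,] = \sum_i a_i\, \mathbb E_i[\,\cdot\,] < \delta$, and the arbitrariness of $\delta$ yields that the infimum is zero.

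For the direction $(\Leftarrow)$, I would take a minimizing sequence $(\tau_n, \Xi_n, \mathbf a_n)$ with $\mathbb E_{\mathbf a_n}[\int_0^{\tau_n} L + \beta\, ds] \to 0$ and fix any $\mathbf b \in F_\beta(y)$. The bridge identity rewrites the $n$-th term as $\sum_i a_{n,i}\, E_{n,i}$, where $E_{n,i} := \mathbb E_i\!\left[\int_0^{\tau_n} L + \beta\, ds - b_i + b_{\omega(\tau_n)}\right] \geq 0$ by Theorem \ref{t20}. Here Proposition \ref{corollozzo} and Remark \ref{min} become decisive: since all $\tau_n \gg \epsilon$ with the same $\epsilon$, there is a uniform $\rho = \rho(\epsilon, A) > 0$ with $a_{n,i} > \rho$ for every $n$ and $i$. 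Therefore $\rho\, E_{n,i} \leq \sum_j a_{n,j}\, E_{n,j} \to 0$, so $E_{n,i} \to 0$ for every $i$ and every $\mathbf b \in F_\beta(y)$, giving condition (iii) of Theorem \ref{t19} and hence $y \in \A$.

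The delicate step is the pasting in $(\Rightarrow)$: assembling per-index optimizers into one $(\tau, \Xi)$ without destroying $\tau \gg \epsilon$ or the cycle property relies on the $\F_0$-measurability of the sets $\mathcal D_j$. That measurability, paired with the uniform positivity of characteristic vectors for stopping times $\gg \epsilon$ in $(\Leftarrow)$, is precisely what turns the per-index formulation of Theorem \ref{t19} into the characteristic-vector formulation of Theorem \ref{t21}.
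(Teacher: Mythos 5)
Your proposal is correct and follows essentially the same route as the paper: the same bridge identity via Remark \ref{now}, the same pasting of per-index optimizers using the $\mathcal F_0$-measurability of the sets $\mathcal D_j$, and the same use of Remark \ref{min} to propagate the smallness from the $\mathbf a$-expectation to each $\mathbb E_i$. The only cosmetic difference is that the paper invokes Corollary \ref{cornow} to get nonnegativity of the infimum in \eqref{e43bis}, whereas you derive it from Theorem \ref{t20} via the bridge identity; both are valid.
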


\smallskip

Theorem \ref{t21} comes from Theorem \ref{t19} and the following

\smallskip

\begin{lemma}\label{post21} Given $\eps >0$, $y \in \mathcal A$ and $\mathbf b \in
F_\beta(y)$, let us consider
\begin{equation}\label{main1}
   \inf \, \mathbb E_i
 \left [ \int_0^\tau L_{\omega(s)}(y +\mathcal I(\Xi),-\Xi) + \beta \, ds  - b_{\omega(0)}+b_{\omega(\tau)} \right ]
\end{equation}
\begin{equation}\label{main2}
  \inf \, \mathbb E_{\mathbf{a}}
 \left [ \int_0^\tau L_{\omega(s)}(y +\mathcal I(\Xi),-\Xi) + \beta \, ds \right ]
\end{equation}
where both the infima are taken with respect to any bounded stopping
times $\tau \gg \epsilon$, $\tau$--cycles $ \Xi$, and  in
\eqref{main2} $\mathbf a$ is a characteristic vector of $\tau$.

Then \eqref{main2} vanishes if and only if  \eqref{main1} vanishes
for any $i \in \{1, \cdots, m\}$.
\end{lemma}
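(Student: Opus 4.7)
The plan rests on two complementary facts. First, both infima are nonnegative: \eqref{main1} by Theorem \ref{t20}, and \eqref{main2} by applying Corollary \ref{cornow} with $x = y$ to a critical subsolution $\mathbf u$ with $\mathbf u(y) = \mathbf b$ (which exists because $\mathbf b \in F_\beta(y)$), since a $\tau$--cycle lies in $\mathcal K(\tau,0)$. Second, whenever $\mathbf a$ is a characteristic vector of $\tau$, Remark \ref{now} yields
\[
\mathbb E_{\mathbf a}[b_{\omega(\tau)}] = \mathbf a \, e^{-A\tau}\cdot \mathbf b = \mathbf a \cdot \mathbf b = \mathbb E_{\mathbf a}[b_{\omega(0)}],
\]
so the functional in \eqref{main2} agrees with $\mathbb E_{\mathbf a}\!\left[\int_0^\tau L + \beta\, ds - b_{\omega(0)} + b_{\omega(\tau)}\right]$. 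This identity is what will bridge the two quantities.

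For the direction \eqref{main2}$\,=0 \Rightarrow$ \eqref{main1}$\,=0$ for every $i$, I will pick a minimizing sequence $(\tau_n, \Xi_n, \mathbf a_n)$ for \eqref{main2} and decompose the shifted expectation along the starting index:
\[
\mathbb E_{\mathbf a_n}\!\left[\int_0^{\tau_n}\! L + \beta\, ds - b_{\omega(0)} + b_{\omega(\tau_n)}\right] = \sum_i a_n^{(i)}\, \mathbb E_i\!\left[\int_0^{\tau_n}\! L + \beta\, ds - b_i + b_{\omega(\tau_n)}\right].
\]
Every summand is nonnegative by Theorem \ref{t20}, and Remark \ref{min} (applicable since $\tau_n \gg \eps$) guarantees $a_n^{(i)} > \rho$ for some $\rho > 0$ depending only on $\eps$. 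Hence each summand must tend to zero, which is precisely \eqref{main1}$\,=0$ for every $i$.

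For the converse I will select, for each $i$ and each $n$, a bounded $\tau_n^{(i)} \gg \eps$ and a $\tau_n^{(i)}$--cycle $\Xi_n^{(i)}$ with $\mathbb E_i[\int_0^{\tau_n^{(i)}}\! L + \beta\, ds - b_i + b_{\omega(\tau_n^{(i)})}] < 1/n$, and glue them across the partition $\mathcal D = \bigsqcup_i \mathcal D_i$ by setting $\tau_n(\omega) = \tau_n^{(i)}(\omega)$ and $\Xi_n(\omega) = \Xi_n^{(i)}(\omega)$ whenever $\omega \in \mathcal D_i$. The step I expect to require the most care is the verification that this glued pair is a legitimate stopping time/cycle with $\tau_n \gg \eps$; this will follow from the identity $\{\tau_n \leq t\} = \bigcup_i (\{\tau_n^{(i)} \leq t\} \cap \mathcal D_i)$ together with $\mathcal D_i \in \mathcal F_0$, which let the stopping-time, $\gg \eps$, adaptedness, and cycle properties propagate piecewise. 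Once that is done, $\mathbb P_i(\mathcal D_i) = 1$ ensures that each $i$-summand remains $< 1/n$ after gluing, and taking any characteristic vector $\mathbf a_n$ of $\tau_n$ (which exists by Remark \ref{now}) together with the identity from the first paragraph yields $\mathbb E_{\mathbf a_n}[\int_0^{\tau_n}\! L + \beta\, ds] < 1/n$, proving \eqref{main2}$\,=0$.
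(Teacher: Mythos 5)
Your proof is correct and follows essentially the same route as the paper: both directions hinge on the decomposition $\mathbb E_{\mathbf a} = \sum_i a_i\,\mathbb E_i$, the cancellation $\mathbf a\cdot\mathbf b = \mathbf a\,e^{-A\tau}\cdot\mathbf b$ from Remark \ref{now}, the lower bound $a_i>\rho$ from Remark \ref{min}, and the gluing of $(\tau_i,\Xi_i)$ over the partition $\{\mathcal D_i\}$ for the converse. The only cosmetic differences are that the paper cites Theorem \ref{t18} (rather than Theorem \ref{t20}) for nonnegativity of the summands and takes the gluing step for granted, whereas you justify it explicitly via $\mathcal D_i\in\mathcal F_0$.
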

\begin{proof}
Let us  assume that \eqref{main1} vanishes for any $i$, then for any
$\delta >0$, any $i$  we find a $\tau_i \gg \eps$ and
$\tau_i$--cycles $\Xi_i$ with
$$ \mathbb E_i
 \left [ \int_0^{\tau_i} L_{\omega(s)}(y +\mathcal I(\Xi_i),-\Xi_i) + \beta \, ds  - b_{\omega(0)}+b_{\omega(\tau_i)}
 \right ] < \delta. $$
 We  define a new stopping time $\tau \gg \eps $ and a  $\tau$--cycle $\Xi$ setting
\begin{eqnarray*}
\tau &=&  \tau_i  \qquad\hbox{on $\mathcal D_i$}\\
\Xi  &=&  \Xi_i  \qquad\hbox{on $\mathcal D_i$}
\end{eqnarray*}
then we get $$ \mathbb E_i
 \left [ \int_0^{\tau } L_{\omega(s)}(y +\mathcal I(\Xi),-\Xi) + \beta \, ds  - b_{\omega(0)}+b_{\omega(\tau)}
 \right ] < \delta \qquad\hbox{for any $i$.} $$
 Taking  a characteristic vector $\a =(a_1, \cdots,
a_m)$ of $\tau$, and making convex combinations in the previous
formula with coefficients $a _i$, we  get  taking into account
Remark \ref{now} $$ \delta
> \mathbb E_{\mathbf a} \,\left [ \int_0^{\tau}  L_{\omega}(y+ \mathcal
I(\Xi),-\Xi)  + \beta \, ds \right ] - \mathbf a \cdot \mathbf b +
(\mathbf a \, e^{- A \tau}) \cdot \b = \mathbb E_{\mathbf a} \,\left
[ \int_0^{\tau} L_{\omega}(y+ \mathcal I(\Xi),-\Xi)  + \beta \, ds
\right ].$$ Since we know that the infimum in \eqref{main2} is
greater that or equal to $0$ thanks to Corollary \ref{cornow} with
$x=y$, the above inequality implies that it must be $0$, as claimed.

Conversely assume that \eqref{main2} is equal to $0$, then for any
$\delta >0$ there is a stopping time $\tau \gg \eps$ with
characteristic vector $\mathbf a$, and a $\tau$--cycle $\Xi$ with
$$ \mathbb E_{\mathbf{a}}
 \left [ \int_0^\tau L_{\omega(s)}(y +\mathcal I(\Xi),-\Xi) + \beta \, ds \right
 ] < \delta$$
Taking into account that $$\sum_i a_i \,\mathbb E_i [-b_{\omega(0)}
+ b_{\omega(\tau)}]= - \mathbf a \cdot \mathbf b
 +  (\mathbf a \, e^{-A \tau}) \cdot \mathbf b =0 $$
 for any $\mathbf b \in F_\beta(y)$, we derive
 $$ \sum_i a_i  \, \mathbb E_i
 \left [ \int_0^\tau L_{\omega(s)}(y +\mathcal I(\Xi),-\Xi) + \beta \, ds -  b_{\omega(0)}
 +
b_{\omega(\tau)} \right  ] < \delta.$$  From Remark \ref{min} and
the fact that the expectations in the above inequality must be
nonnegative because of Theorem \ref{t18}, we deduce
$$ \mathbb E_i \left [ \int_0^\tau L_{\omega(s)}(y +\mathcal I(\Xi),-\Xi) + \beta
\, ds -  b_{\omega(0)} + b_{\omega(\tau)} \right  ] < \frac \delta
\rho \qquad\hbox{for any $i$},$$ where $\rho$ is the constant
appearing in Proposition \ref{corollozzo}. This implies that the
infima in \eqref{main1} must vanish for any $i$.

$\hfill{\Box}$
\end{proof}

\medskip

\begin{appendices}
\section{Stochastic Matrices} \label{stocmatrx}

 \parskip +3pt

In this appendix we briefly collect some elementary linear algebraic results concerning stochastic matrices.
 The material is manly taken  from  \cite{Meyer}, \cite{Norris}, where  the reader can find  more details.

\medskip

 We denote by $\mathcal{S} \subset \mathbb{R}^{m}$ the simplex of probability
 vectors of $\mathbb{R}^{m}$, namely with nonnegative components summing to
 $1$. It is a compact convex set.
 \smallskip

 \begin{definition}
  A positive matrix $M$ is a matrix for which all the entries are positive, and we write $M > 0$.
\end{definition}

 \smallskip

 \begin{definition}
  A right stochastic matrix is a matrix of nonnegative entries with each row summing to $1$.
\end{definition}

\smallskip

\begin{proposition}\label{prop32} A matrix $B$ is stochastic if and only if
\begin{equation}\label{e53}
    \mathbf{a} \, B \in \mathcal{S} \;\; \mbox{whenever $\mathbf{a} \in \mathcal{S}$}.
\end{equation}
\end{proposition}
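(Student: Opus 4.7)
The plan is to prove both implications by direct computation, using the fact that $\mathcal{S}$ is characterized by two conditions (nonnegative entries and row sum equal to $1$), and exploiting the extremal points $\mathbf{e}_i$ of $\mathcal{S}$ for the converse direction.

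For the forward implication, I would assume $B$ is stochastic and pick an arbitrary $\mathbf{a} \in \mathcal{S}$. To check $\mathbf{a} B \in \mathcal{S}$, one verifies componentwise nonnegativity (each $(\mathbf{a}B)_j = \sum_i a_i B_{ij}$ is a sum of products of nonnegative numbers, hence nonnegative) and then computes
\[
\sum_j (\mathbf{a}B)_j = \sum_j \sum_i a_i B_{ij} = \sum_i a_i \sum_j B_{ij} = \sum_i a_i = 1,
\]
where the penultimate equality uses that $B$ has row sums equal to $1$, and the last uses $\mathbf{a} \in \mathcal S$.

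For the converse, I would exploit the fact that the standard basis vectors $\mathbf{e}_i \in \mathcal S$ for each $i \in \{1,\dots,m\}$. By hypothesis, $\mathbf{e}_i B \in \mathcal{S}$, but $\mathbf{e}_i B$ is exactly the $i$-th row of $B$. Thus every row of $B$ has nonnegative entries and sums to $1$, which is the definition of stochasticity.

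Neither direction presents a genuine obstacle: the statement is essentially a tautology once one observes that testing the condition on the extremal points of $\mathcal S$ already recovers the defining properties of $B$, and that these properties propagate to arbitrary convex combinations by linearity of right multiplication.
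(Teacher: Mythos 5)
Your proof is correct and follows essentially the same route as the paper: the paper likewise observes that $\mathbf e_i B$ is the $i$-th row of $B$, so stochasticity of $B$ is equivalent to $\mathbf e_i B \in \mathcal S$ for every $i$, which in turn is equivalent to \eqref{e53} by linearity of right multiplication on the convex set $\mathcal S$. You merely spell out the componentwise computations that the paper leaves implicit.
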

\begin{proof}
$B$ is stochastic if and only if each one of its rows
is a probability vector, i.e.
$$\mathbf e_{i} \,  B \in \mathcal{S} \quad\mbox{for every $i$},$$
 which in turn is equivalent to (\ref{e53}).

 $\hfill{\Box}$
\end{proof}

\smallskip

By Perron-Frobenius theorem for nonnegative matrices, we have
\begin{proposition}\label{prop33}
Let $B$ be a stochastic matrix, then its maximal  eigenvalue is $1$ and there is a corresponding left
eigenvector in $\mathcal{S}$.
\end{proposition}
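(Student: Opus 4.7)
The plan is to split the proposition into three independent verifications: that $1$ is an eigenvalue of $B$, that every eigenvalue has modulus at most $1$, and that there is a left eigenvector for $1$ lying in the simplex $\mathcal{S}$. The first two are elementary linear algebra, and the third can be handled either by invoking the cited Perron--Frobenius theorem or, more self-containedly, by a Brouwer fixed point argument on $\mathcal{S}$.

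For the first step I would observe that the column vector $\mathbf{1}=(1,\dots,1)^{T}$ is a right eigenvector of $B$ with eigenvalue $1$: indeed, by the row-sum condition characterizing stochastic matrices, $(B\mathbf{1})_{i}=\sum_{j}B_{ij}=1$ for each $i$, so $B\mathbf{1}=\mathbf{1}$. Since $B$ and $B^{T}$ share the same spectrum, $1$ is also an eigenvalue of $B^{T}$, hence $B$ admits some nonzero left eigenvector for the eigenvalue $1$ (though at this stage we have no sign information on it).

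For the maximality, I would take any eigenpair $Bv=\lambda v$ with $v\neq 0$ and pick an index $i_{0}$ achieving $|v_{i_{0}}|=\max_{i}|v_{i}|$. Using nonnegativity of the entries of $B$ together with $\sum_{j}B_{i_{0}j}=1$, I would estimate
$$|\lambda|\,|v_{i_{0}}| \;=\; \Bigl|\sum_{j} B_{i_{0}j}\,v_{j}\Bigr| \;\leq\; \sum_{j} B_{i_{0}j}\,|v_{j}| \;\leq\; |v_{i_{0}}|,$$
which gives $|\lambda|\leq 1$ and confirms that $1$ is the maximal eigenvalue.

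For the remaining assertion, the cleanest approach I would use is Brouwer's fixed point theorem applied to the map $T:\mathcal{S}\to\mathcal{S}$ defined by $T(\mathbf{a})=\mathbf{a}\,B$. This map is well-defined by Proposition \ref{prop32}, continuous (being linear), and $\mathcal{S}$ is a nonempty compact convex subset of $\mathbb{R}^{m}$, so Brouwer provides $\mathbf{a}\in\mathcal{S}$ with $\mathbf{a}\,B=\mathbf{a}$. The main, and really only, obstacle is producing this left eigenvector with nonnegative components summing to one: pure eigenvalue considerations give existence of \emph{some} left eigenvector for $1$, but not its location in $\mathcal{S}$, and it is precisely here that the fixed point argument (or, equivalently, the Perron--Frobenius theorem cited in the statement) is indispensable.
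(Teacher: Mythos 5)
Your argument is correct, but it takes a genuinely different and more self-contained route than the paper, which offers no proof at all and simply invokes the Perron--Frobenius theorem for nonnegative matrices as a black box. You instead verify everything directly: the row-sum identity $B\mathbf{1}=\mathbf{1}$ exhibits $1$ as an eigenvalue; the max-index estimate $|\lambda|\,|v_{i_0}|\leq\sum_j B_{i_0 j}|v_j|\leq|v_{i_0}|$ bounds the spectral radius by $1$; and Brouwer's fixed point theorem applied to the continuous map $\mathbf{a}\mapsto\mathbf{a}B$, which by Proposition \ref{prop32} sends the nonempty compact convex set $\mathcal{S}$ into itself, produces the left eigenvector inside the simplex. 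The fixed-point route is a standard and legitimate substitute for Perron--Frobenius when all one needs is existence of a stationary distribution (as opposed to simplicity or positivity of the eigenvector, which the paper defers to Proposition \ref{prop34} under the extra positivity hypothesis). The paper's citation buys brevity and ties into the subsequent use of Perron--Frobenius in its stronger form; your version buys transparency and independence from that machinery, at the modest cost of one topological input. Both are sound.
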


\smallskip

By Perron-Frobenius theorem for positive matrices, we have

\begin{proposition}\label{prop34}  Let $B$ be a positive stochastic matrix, then its
maximal  eigenvalue is $1$ and is simple.  In addition, there exists a unique positive
corresponding left eigenvector which is an element of $\mathcal{S}$.
\end{proposition}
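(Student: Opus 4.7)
The plan is to deduce the statement directly from the Perron--Frobenius theorem for positive matrices, using the stochastic structure only to pin down the value of the dominant eigenvalue and to produce a probability-vector normalization.

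First, I would recall that Perron--Frobenius for positive matrices guarantees that the spectral radius $\rho(B)$ is a simple eigenvalue of $B$ and that, up to scalar multiples, there is a unique (left) eigenvector associated with it, which may be chosen to have strictly positive entries. Since $B$ and $B^T$ have the same spectrum, I apply the theorem to $B^T$ to get the statement on the left side; equivalently, one may run Perron--Frobenius directly on the left-acting setting. The existence and simplicity parts of the conclusion are then obtained, modulo identifying the dominant eigenvalue as $1$.

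Next I would identify $\rho(B) = 1$ in two steps. For the lower bound, since $B$ is stochastic, the column vector $\mathbf{1} = (1, \cdots, 1)^T$ satisfies $B \mathbf{1} = \mathbf{1}$ because each row of $B$ sums to $1$. Hence $1$ is an eigenvalue of $B$, and therefore $\rho(B) \geq 1$. For the upper bound, I would use the fact that for any eigenvalue $\lambda$ with left eigenvector $\mathbf{v} = (v_1, \cdots, v_m)$, applying absolute values to $\lambda v_j = \sum_i v_i \, B_{ij}$ and summing over $j$ gives $|\lambda| \sum_j |v_j| \leq \sum_i |v_i| \sum_j B_{ij} = \sum_i |v_i|$, so $|\lambda| \leq 1$. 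Combining both bounds, $\rho(B) = 1$, and by the Perron--Frobenius part above, $1$ is a simple eigenvalue.

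Finally, for the normalization: let $\mathbf{a}$ be a strictly positive left eigenvector for the eigenvalue $1$, which exists by the Perron--Frobenius part. Since all $a_i > 0$, the sum $\sum_i a_i$ is strictly positive, so I may replace $\mathbf{a}$ by $\mathbf{a}/\sum_i a_i \in \mathcal{S}$. Uniqueness in $\mathcal{S}$ follows because simplicity forces the $1$-eigenspace to be one-dimensional, so any two positive left eigenvectors are positive scalar multiples of each other, and the normalization condition $\sum_i a_i = 1$ fixes the scalar. The main (and essentially only) subtle point is the identification $\rho(B) = 1$; everything else is a straightforward transcription from Perron--Frobenius to the stochastic setting.
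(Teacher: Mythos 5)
Your proposal is correct and matches the paper's approach: the paper simply cites the Perron--Frobenius theorem for positive matrices, which is exactly what you invoke. You additionally spell out why $\rho(B)=1$ (via $B\mathbf{1}=\mathbf{1}$ and the row-sum bound) and the normalization to $\mathcal{S}$, details the paper leaves implicit but which are standard and correctly done.
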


\medskip

In view of  application to  coupling matrices of  system, we recall

\smallskip

\begin{proposition} \label{prop26}
Given a matrix $A$ and $t\geq0$. Assume (A1) and (A2) hold, then $ e^{- At}$ is stochastic.
\end{proposition}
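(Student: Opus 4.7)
The plan is to verify the two defining properties of a right stochastic matrix separately: each row of $e^{-At}$ sums to $1$, and every entry of $e^{-At}$ is nonnegative.

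First I would handle the row-sum property. Assumption (A2) says exactly that $A \mathbf 1 = 0$, where $\mathbf 1 = (1,\dots,1)^{\top}$ is the all-ones vector. Consequently $A^{k} \mathbf 1 = 0$ for every $k \geq 1$, so by the power series definition of the exponential,
\[
e^{-At} \mathbf 1 = \mathbf 1 + \sum_{k\geq 1} \frac{(-t)^{k}}{k!}\, A^{k} \mathbf 1 = \mathbf 1,
\]
which means each row of $e^{-At}$ sums to $1$.

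The main point is the nonnegativity of the entries; here the mixed signs in $-A$ (off-diagonal entries nonnegative by (A1), diagonal entries possibly nonpositive by (A2)) prevent a direct term-by-term argument on the exponential series. I would handle this with the standard shift trick: choose $c \geq \max_i |a_{ii}|$ and write
\[
-A = \bigl(cI - A\bigr) - cI.
\]
By (A1), the off-diagonal entries of $cI - A$ coincide with $-a_{ij} \geq 0$, and by the choice of $c$ the diagonal entries $c - a_{ii}$ are nonnegative as well. Thus $cI - A$ is an entrywise nonnegative matrix, so its powers are also entrywise nonnegative, and
\[
e^{t(cI - A)} = \sum_{k\geq 0}\frac{t^{k}}{k!}(cI - A)^{k} \geq 0
\]
entrywise. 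Since $cI$ commutes with $cI - A$, we can factor
\[
e^{-At} = e^{-ct}\, e^{t(cI - A)},
\]
which is a product of the positive scalar $e^{-ct}$ and an entrywise nonnegative matrix, and is therefore entrywise nonnegative.

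Combining the row-sum identity with nonnegativity yields that $e^{-At}$ is a right stochastic matrix, as required. The only nontrivial step is the nonnegativity: once one recognizes the shift $A \mapsto A - cI$ that turns $-A$ into a nonnegative matrix while preserving commutativity, the rest is routine.
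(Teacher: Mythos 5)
Your proof is correct. Note that the paper itself does not supply a proof of this proposition; it simply refers the reader to the cited reference \cite{siconolfi3}, so there is no in-paper argument to compare against. Your two steps are exactly the standard ones for generators of continuous-time Markov chains: the row-sum identity follows from $A\mathbf 1 = 0$ and hence $A^k\mathbf 1 = 0$ for all $k\geq 1$; and the entrywise nonnegativity follows from the diagonal-shift (uniformization) trick, writing $-A = (cI-A) - cI$ with $c \geq \max_i a_{ii}$ so that $cI-A$ is entrywise nonnegative (recall $a_{ii} = -\sum_{j\neq i}a_{ij} \geq 0$ by (A1)--(A2), so $|a_{ii}| = a_{ii}$), and then factoring $e^{-At} = e^{-ct}\,e^{t(cI-A)}$ using commutativity of $cI$ with $A$. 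Every step is sound and the argument is self-contained.
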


\smallskip

See  \cite{siconolfi3} for the proof.

\medskip

Exploiting the irreducibility condition (A3), we also have, see
Theorem 3.2.1 in \cite{Norris}.

\begin{proposition} \label{prop35}
Let $A$ be a matrix satisfying  (A1), (A2), (A3) then  $e^{-A t}$ is
positive for any $t >0$.
\end{proposition}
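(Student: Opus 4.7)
The plan is to reduce the claim to a standard fact about nonnegative irreducible matrices via the shift $-A = B - cI$, where $c > 0$ is chosen large enough that $B := -A + cI$ has all nonnegative entries (for instance, $c \ge \max_i |a_{ii}|$, using (A1) and (A2) to control the diagonal). Since $B$ and $-cI$ commute, we can factor
\[
 e^{-At} = e^{Bt - cIt} = e^{-ct}\, e^{Bt},
\]
and because $e^{-ct}>0$ it suffices to prove that $e^{Bt}$ has all entries strictly positive for every $t>0$.

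Next, I would analyze $e^{Bt}=\sum_{k=0}^{\infty} \frac{t^k}{k!}B^k$. All summands are nonnegative matrices, so to get strict positivity of the $(i,j)$--entry it is enough to exhibit \emph{some} integer $k\ge 0$ with $(B^k)_{ij}>0$. For $i=j$ we may take $k=0$; for $i\neq j$ the off--diagonal entry $b_{ij}=-a_{ij}$ is nonnegative, and $(B^k)_{ij}$ is bounded below by the sum over all paths $i=i_0, i_1,\ldots,i_k=j$ of the products $b_{i_0 i_1}\cdots b_{i_{k-1} i_k}$. Hence I only need to produce, for every ordered pair $(i,j)$ with $i\neq j$, a finite sequence of indices connecting them along edges where $b$ is strictly positive, i.e.\ where $a$ is strictly negative.

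The combinatorial lemma is that (A3) implies precisely this reachability property. Fix $i$ and let $W$ be the set of indices $j$ such that some path $i=i_0,\ldots,i_k=j$ has $a_{i_{\ell-1}i_\ell}<0$ at every step (including $j=i$ via the empty path). Then $W\neq\emptyset$. If $W$ were a proper subset of $\{1,\ldots,m\}$, (A3) would give $i'\in W$, $j'\notin W$ with $a_{i'j'}<0$, which would extend a path ending at $i'$ by the edge $i'\to j'$, contradicting $j'\notin W$. Therefore $W=\{1,\ldots,m\}$, which yields the required path for every pair $(i,j)$.

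Putting the pieces together: for every $(i,j)$ there exists $k\ge 0$ with $(B^k)_{ij}>0$, hence $(e^{Bt})_{ij}\ge \frac{t^k}{k!}(B^k)_{ij}>0$ for $t>0$, and multiplication by the positive scalar $e^{-ct}$ preserves this, giving $\bigl(e^{-At}\bigr)_{ij}>0$. The only delicate step is the combinatorial argument translating (A3) into strong connectivity of the directed graph on which the power series is supported; once that is in place, nonnegativity of the remaining summands in $e^{Bt}$ makes positivity automatic.
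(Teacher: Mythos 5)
Your proof is correct and is essentially the standard argument; the paper itself gives no proof here but simply cites Theorem~3.2.1 of Norris's \emph{Markov chains}, and the argument you wrote out (shift to $B=-A+cI\ge 0$, factor $e^{-At}=e^{-ct}e^{Bt}$, expand the exponential as a nonnegative series, and use (A3) to show the support graph of $B$ is strongly connected) is exactly the proof behind that citation. The one small presentational point: the relevant consequence of (A1)--(A2) is that $a_{ii}=-\sum_{j\ne i}a_{ij}\ge 0$, so $|a_{ii}|=a_{ii}$ and $c\ge\max_i a_{ii}$ makes $B$ nonnegative; stating that inequality explicitly would make the choice of $c$ transparent. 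The reachability lemma from (A3) is stated and used correctly, including the $k=0$ case for $i=j$.
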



\section{Path spaces}\label{cadpath}
We refer readers to \cite{Pabi} for the material presented in this
section.

\bigskip

The term c\`{a}dl\`{a}g indicates a function, defined in some
interval of $\R$, which is continuous on the right and has left
limit.  We denote by $\mathcal D:=\mathcal D(0,+\infty;\{1, \cdots,
m\})$ and $\mathcal D(0,+\infty;\mathbb{R}^{N})$ the spaces of
c\`{a}dl\`{a}g paths defined in $[0,+\infty)$ with values in $\{1,
\cdots, m\}$ and $\mathbb{R}^{N}$, respectively.

\bigskip

To any finite increasing sequence of times $t_1, \cdots, t_k$, with
$k \in \mathbb{N}$,  and indices $j_1, \cdots, j_k$ in $\{1, \cdots,
m\}$ we associate a cylinder defined as
$$\mathcal {C}(t_1, \cdots,t_k;j_1, \cdots,j_k)= \{\omega \mid
\omega(t_1)=j_1, \cdots, \omega(t_k)=j_k\} \subset \mathcal{D}.$$
\smallskip
We denote by $\mathcal{D}_{i}$ cylinders of type $\mathcal C(0;i)$ for any $i \in \{1, \cdots, m\}$.\\
We call  multi-cylinders the sets made up by finite unions of mutually disjoint cylinders.

\bigskip

The space $\mathcal D$ of c\`{a}dl\`{a}g paths is endowed with the
$\sigma$--algebra $\mathcal F$
 spanned by cylinders of the type $\mathcal C(s;i)$, for $s \geq 0$ and $i \in \{1, \cdots, m\}$.
 A natural related filtration $\F_t$ is obtained by picking, as generating sets, the cylinders
$\mathcal C(t_1, \cdots,t_k;j_1, \cdots,j_k)$ with $t_k \leq t$, for
any fixed $t\geq 0$.

\medskip

We can perform same construction in $\mathcal D(0,+\infty;\mathbb{R}^{N})$, and in this case the $\sigma$--algebra,
 denoted by $\F'$,  is spanned by the sets
 \begin{equation}\label{cyli}
  \{\xi \in \mathcal D(0,+\infty;\mathbb{R}^{N}) \mid \xi(s) \in E\}
\end{equation}
for $s \geq 0$ and $E$ varying in the Borel $\sigma$--algebra
related to the natural topology of $\mathbb{R}^N$. A related
filtration is given by the increasing family of $\sigma$--algebras
$\F'_t$  spanned by cylinders in \eqref{cyli} with $s \leq t$.

\medskip

Both $\mathcal D$ and  $\mathcal D(0,+\infty;\mathbb{R}^{N})$ can be endowed with a metric, named after Skorohod,
 which makes them Polish spaces, namely complete and separable.
 Above $\sigma$--algebras $\F$, $\F'$ are the corresponding Borel $\sigma$--algebras.
\bigskip

The convergence induced by Skorohod metric is defined, say in
$\mathcal D(0,+\infty;\mathbb{R}^{N})$ to fix ideas, requiring that
there exists a sequence $f_n$ of  strictly increasing continuous
functions from $[0,+\infty]$  onto itself (then $f_n(0)=0$ for any
$n$)  such that
\begin{eqnarray*}
  f_n(s) &\to& s \quad\hbox{uniformly in $[0,+\infty]$} \\
  \xi_n(f_n(s)) &\to& \xi(s) \quad\hbox{uniformly in $[0,+\infty]$.}
\end{eqnarray*}

\bigskip

We consider the measurable shift flow $\phi_h$ on $\mathcal D$,
for $h \geq 0$, defined by

$$    \phi_h(\omega)(s)= \omega(s+h) \qquad\hbox{for any $s \in [0,+
\infty)$, $\omega \in \mathcal D$.}$$
\begin{proposition}\label{supershift} Given nonnegative constants $h$,
$t$, we have
\[ \phi_h^{-1}(\F_t) \subset \F_{t+h}.\]
\end{proposition}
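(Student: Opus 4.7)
The plan is to exploit the fact that $\F_t$ has an explicit generating family, namely the cylinders $\mathcal C(t_1,\dots,t_k;j_1,\dots,j_k)$ with $t_k \leq t$, and then use the standard identity
\[
\phi_h^{-1}(\sigma(\mathcal G)) = \sigma(\phi_h^{-1}(\mathcal G))
\]
valid for any family $\mathcal G$ of subsets of $\mathcal D$. Applied to our situation, this reduces the claim to checking that the preimage of a single generating cylinder of $\F_t$ lies in $\F_{t+h}$.

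First I would compute this preimage directly. For indices $j_1,\dots,j_k \in \{1,\dots,m\}$ and times $0 \leq t_1 < \cdots < t_k \leq t$, by the very definition of $\phi_h$ we have
\[
\phi_h^{-1}\bigl(\mathcal C(t_1,\dots,t_k;j_1,\dots,j_k)\bigr)
= \{\omega \mid \omega(t_1+h)=j_1,\dots,\omega(t_k+h)=j_k\}
= \mathcal C(t_1+h,\dots,t_k+h;j_1,\dots,j_k).
\]
Since $t_k+h \leq t+h$, this shifted cylinder belongs to the generating family of $\F_{t+h}$, hence to $\F_{t+h}$ itself.

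Finally, invoking the generation identity above, I conclude
\[
\phi_h^{-1}(\F_t) = \sigma\bigl(\phi_h^{-1}(\mathcal G_t)\bigr) \subset \F_{t+h},
\]
where $\mathcal G_t$ denotes the family of cylinders with last time $\leq t$. There is no real obstacle: the only step one should be a bit careful about is citing the commutation of preimage with $\sigma$-generation, which is a routine fact but essential in order to avoid having to run a monotone class argument by hand on the $\sigma$-algebra $\F_t$.
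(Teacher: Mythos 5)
Your proof is correct; the paper in fact states Proposition \ref{supershift} without proof, and what you have written is precisely the natural argument one would supply. The reduction to generators via $\phi_h^{-1}(\sigma(\mathcal G)) = \sigma(\phi_h^{-1}(\mathcal G))$ and the explicit computation $\phi_h^{-1}\bigl(\mathcal C(t_1,\dots,t_k;j_1,\dots,j_k)\bigr) = \mathcal C(t_1+h,\dots,t_k+h;j_1,\dots,j_k)$, with $t_k + h \leq t + h$, are exactly what is needed, given that $\F_t$ is defined in the paper as the $\sigma$-algebra generated by cylinders with last time $\leq t$.
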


\medskip

We also consider that space $\mathcal C(0,+\infty;\mathbb{T}^{N})$
of continuous paths defined in $[0,+ \infty)$  with the local
uniform convergence. We can associate to it a metric  making it a
Polish space.

We define a  map
$$\mathcal I: \mathcal D(0,+\infty;\mathbb{R}^{N}) \rightarrow  \mathcal C(0,+\infty;\mathbb{T}^{N})$$
via
$$\mathcal {I}(\xi)(t) = \mathrm{proj} \left ( \int_0^t \xi ds \right ) $$
where $\mathrm{proj}$ indicates the projection from $\R^N$ onto
$\mathbb T^N$. It is continuous with respect to the aforementioned
metrics, see \cite{siconolfi3}.

\section{Random setting}\label{randomset}

The material of this section is taken from \cite{siconolfi3}. We are
going to define a family of probability measures on $(\mathcal D,
\F)$, see \cite{siconolfi3}. We start from a preliminary result.
Taking into account that $\F$, $\F_t$ are generated by cylinders, we
get by the Approximation Theorem for Measures, see \cite[Theorem
1.65]{Kl}.

\smallskip

\begin{proposition}\label{klenke}
Let $\mu$ be a finite measure on $\F$. For any $E \in \F$, there is
a sequence $E_n$ of multi--cylinders  in $\F$ with
\[\lim_n \mu(E_n  \triangle E)=0, \]
where $\triangle$ stands for the symmetric difference. As a
consequence we see  that two finite measures on $\mathcal D$
coinciding on the family of cylinders, are actually equal.
\end{proposition}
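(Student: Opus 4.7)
My approach is to reduce the statement to the Approximation Theorem for Measures (Theorem 1.65 in \cite{Kl}), which is explicitly invoked in the text. For that theorem to apply directly, the family $\mathcal M$ of multi--cylinders must be an algebra which generates $\F$; the bulk of the work consists in verifying this.

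First I would observe that the family $\mathcal C$ of cylinders is closed under finite intersections: given
\[\mathcal C(s_1,\dots,s_p;i_1,\dots,i_p) \quad\hbox{and}\quad \mathcal C(t_1,\dots,t_q;j_1,\dots,j_q),\]
one merges the two time lists into a single increasing sequence; if at any common time the prescribed indices disagree the intersection is empty, otherwise it is a cylinder on the combined time list. Hence $\mathcal C$ is a $\pi$--system generating $\F$ by the very definition of $\F$ given in Appendix \ref{cadpath}.

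Next I would check that $\mathcal M$ is an algebra. It obviously contains $\emptyset$ and $\mathcal D$ and is closed under finite disjoint unions. The key point is the complement of a single cylinder: since the state space $\{1,\dots,m\}$ is finite,
\[\mathcal C(t_1,\dots,t_k;j_1,\dots,j_k)^{\,c} = \bigsqcup_{(l_1,\dots,l_k)\neq (j_1,\dots,j_k)} \mathcal C(t_1,\dots,t_k;l_1,\dots,l_k),\]
a finite disjoint union of cylinders, hence a multi--cylinder. Combining this with closure of $\mathcal C$ under finite intersection and the standard trick that any finite union can be rewritten as a disjoint union via intersections and complements, $\mathcal M$ turns out to be closed under finite unions and complements, i.e.\ it is an algebra. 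Clearly $\sigma(\mathcal M)=\sigma(\mathcal C)=\F$. I anticipate this algebra verification to be the main (though routine) obstacle, because one must keep track of disjointness when rewriting unions.

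With these preliminaries, the Approximation Theorem of \cite{Kl} applied to the finite measure $\mu$ on $\sigma(\mathcal M)=\F$ immediately yields, for every $E\in\F$, a sequence $E_n\in\mathcal M$ with $\mu(E_n\triangle E)\to 0$. For the final consequence, let $\mu_1,\mu_2$ be two finite measures on $\F$ agreeing on every cylinder. By finite additivity (and the disjointness in the definition of multi--cylinder), they also agree on $\mathcal M$. Apply the first part to the finite measure $\nu:=\mu_1+\mu_2$ to find multi--cylinders $E_n$ with $\nu(E_n\triangle E)\to 0$; then $\mu_j(E_n\triangle E)\to 0$ for $j=1,2$, so
\[\mu_1(E)=\lim_n \mu_1(E_n)=\lim_n \mu_2(E_n)=\mu_2(E),\]
concluding the proof.
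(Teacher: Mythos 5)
Your proof is correct and follows exactly the route the paper indicates: the paper gives no explicit argument, merely citing Klenke's Approximation Theorem~1.65, and you supply the (routine but necessary) verification that the multi-cylinders form an algebra generating~$\F$ so that the theorem applies. Your handling of the consequence (approximating with respect to $\mu_1+\mu_2$ and using that agreement on cylinders plus finite additivity over disjoint unions gives agreement on multi-cylinders) is also sound.
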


\medskip
   Given a probability vector $\mathbf a$ in $\mathbb{R}^{m}$, namely with nonnegative components summing to 1,
    we define for any cylinder $\mathcal{C}(t_1, \cdots,t_k;j_1, \cdots , j_k)$ a nonnegative function $\mu_{a}$
\begin{equation}\label{e54}
    \mu_\a (\mathcal C(t_1, \cdots,t_k;j_1, \cdots,j_k)) = \left (\mathbf{a} \,e^{- A t_1} \right )_{j_1} \,
\prod_{l=2}^{k} \left (e^{-(t_l-t_{l-1})A} \right )_{j_{l-1}\,j_l}.
\end{equation}
We then exploit that $e^{-A t}$ is  stochastic to uniquely extend $\mu_{\mathbf
a}$, through Daniell--Kolmogorov Theorem, to a probability measure $\mathbb P_{\mathbf a}$  on $(\mathcal D, \mathcal F)$, see
for instance \cite[Theorem 1.2]{Swart}.\\
Hence, in view of (\ref{e54}), we have
\begin{proposition}\label{conti} The map
   $$\mathbf{a} \rightarrow \mathbb P_{\mathbf a}$$
    is injective, linear and continuous from $\mathcal S \subset \mathbb R^m$ to the space
   of probability measures on $\mathcal D$ endowed with the weak
   convergence.\\
Consequently, the measures $\mathbb P_{\mathbf a}$ are spanned by $\mathbb{P}_{i} :=\mathbb{P}_{\mathbf{e_{i}}}$, for $i \in \{1,\cdots,m\}$, and
\begin{equation}\label{prlin}
\mathbb{P}_{\mathbf{a}}= \sum_{i=1}^{m}a_{i} \, \mathbb{P}_{i}.
\end{equation}
\end{proposition}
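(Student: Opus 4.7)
\textbf{Proof plan for Proposition \ref{conti}.}
The strategy is to establish linearity first (directly from the cylinder formula \eqref{e54}), then deduce \eqref{prlin} as an immediate corollary, and finally read off continuity and injectivity.

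The first step is to prove linearity. Inspecting \eqref{e54}, only the factor $(\mathbf{a}\,e^{-At_1})_{j_1}$ depends on $\mathbf{a}$, and this factor is a linear function of $\mathbf{a}\in\mathbb R^m$. Hence, for any $\lambda,\mu\in\mathbb R$ and $\mathbf{a},\mathbf{b}\in\mathcal S$ with $\lambda\mathbf{a}+\mu\mathbf{b}\in\mathcal S$ (in particular for convex combinations), the identity
\[
  \mu_{\lambda\mathbf{a}+\mu\mathbf{b}}(\mathcal C)=\lambda\,\mu_{\mathbf{a}}(\mathcal C)+\mu\,\mu_{\mathbf{b}}(\mathcal C)
\]
holds on every cylinder $\mathcal C$. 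By Proposition \ref{klenke}, two probability (more generally, finite) measures on $\mathcal F$ that agree on cylinders coincide, and this forces the same relation at the level of the extended measures $\mathbb P_{\cdot}$ on $(\mathcal D,\mathcal F)$.

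Next I would derive \eqref{prlin}. Writing $\mathbf{a}=\sum_{i=1}^{m}a_i\,\mathbf{e}_i$ and applying the linearity just proved yields $\mathbb P_{\mathbf{a}}=\sum_i a_i\,\mathbb P_{\mathbf{e}_i}=\sum_i a_i\,\mathbb P_i$, which is precisely \eqref{prlin}. From this representation, continuity is immediate: if $\mathbf{a}_n\to\mathbf{a}$ in $\mathcal S$, then for every bounded measurable $f:\mathcal D\to\mathbb R$,
\[
  \int f\,d\mathbb P_{\mathbf{a}_n}=\sum_i a_{n,i}\int f\,d\mathbb P_i\;\longrightarrow\;\sum_i a_i\int f\,d\mathbb P_i=\int f\,d\mathbb P_{\mathbf{a}},
\]
so the convergence holds in total variation, and a fortiori in the weak sense. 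Injectivity follows by evaluating on the cylinders $\mathcal D_i=\mathcal C(0;i)$: formula \eqref{e54} with $k=1$, $t_1=0$ gives $\mathbb P_{\mathbf{a}}(\mathcal D_i)=(\mathbf{a}\,e^{-A\cdot 0})_i=a_i$, so $\mathbb P_{\mathbf{a}}=\mathbb P_{\mathbf{b}}$ forces $\mathbf{a}=\mathbf{b}$.

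No step here is really an obstacle; the only subtlety is the logical order, namely that Proposition \ref{klenke} (uniqueness of the extension from cylinders) is the engine upgrading the cylinder-level identity into an identity of measures, after which linearity, \eqref{prlin}, continuity, and injectivity all fall into place in a few lines.
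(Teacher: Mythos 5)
Your argument is correct and follows the same route the paper intends: the paper offers no explicit proof, merely prefacing the proposition with "Hence, in view of (\ref{e54}), we have," and your write-up simply spells out what that formula yields — linearity of $\mathbf a\mapsto\mu_\a$ on cylinders, upgraded to the extended measures by the uniqueness part of Proposition \ref{klenke}, then \eqref{prlin}, total-variation (hence weak) continuity, and injectivity from $\mathbb P_\mathbf a(\mathcal D_i)=a_i$. One tiny remark: Proposition \ref{klenke} is stated for (nonnegative) finite measures, so the upgrade from the cylinder-level identity is clean exactly for convex combinations, which is the case you actually use; your parenthetical already flags this.
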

By (\ref{e54}) we also get that $\mathbb{P}_{i}$ are supported in $\mathcal {D}_{i}:=\mathcal{C}(0;i)$.

\smallskip

We denote by $\mathbb{E}_\mathbf{a}$ the expectation operators relative to $\mathbb{P}_{\mathbf{a}}$, and we put $\mathbb{E}_{i}$ instead of $\mathbb{E}_{\mathbf{e_{i}}}$.\\
We say that some property holds almost surely, a.s. for short, if it
is valid up to $\mathbb{P}_{\mathbf{a}}-$null set for some
$\mathbf{a}
> 0$. We state for later use:
\smallskip

\begin{lemma}\label{nullo} Let $f$, $\mathbf{a}$ be a real random variable and a positive probability vector, respectively. If
\[\int_E f \, d\mathbb{P}_{\mathbf{a}} =0 \qquad\hbox{for any $E \in \mathcal F$}\]
then $f =0$ a.s.
\end{lemma}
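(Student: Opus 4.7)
The plan is to deduce the conclusion from the standard measure-theoretic principle that a measurable function whose integral over every measurable set vanishes must itself vanish almost everywhere with respect to that measure. Since the hypothesis guarantees $\int_E f \, d\mathbb P_\mathbf{a} = 0$ for the specific positive probability vector $\mathbf{a}$ appearing in the statement, and since a.s.\ in the paper's sense is defined as being valid up to a $\mathbb P_\mathbf{b}$-null set for some positive $\mathbf{b}$, it is enough to produce a $\mathbb P_\mathbf{a}$-null set containing $\{f \neq 0\}$.

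First, I would introduce for each $n \in \mathbb N$ the sets
\[ E_n = \{\omega \in \mathcal D \mid f(\omega) \geq 1/n\}, \qquad F_n = \{\omega \in \mathcal D \mid f(\omega) \leq -1/n\},\]
which lie in $\mathcal F$ because $f$ is a random variable. Applying the hypothesis on $E_n$ I obtain
\[0 = \int_{E_n} f \, d\mathbb P_\mathbf{a} \geq \frac{1}{n}\, \mathbb P_\mathbf{a}(E_n) \geq 0,\]
which forces $\mathbb P_\mathbf{a}(E_n) = 0$. The identical argument, applied to $-f$ on $F_n$, yields $\mathbb P_\mathbf{a}(F_n) = 0$.

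I would then conclude by observing that
\[ \{\omega \mid f(\omega) \neq 0\} = \bigcup_{n \in \mathbb N} \big(E_n \cup F_n\big),\]
which is a countable union of $\mathbb P_\mathbf{a}$-null sets, hence itself $\mathbb P_\mathbf{a}$-null. Since $\mathbf{a} > 0$ by assumption, this is exactly the a.s.\ conclusion of the statement. There is no substantive obstacle in this argument; the only point deserving a word of caution is that one must ensure the integral on $E_n$ makes sense, but this is guaranteed as soon as $f$ is integrable (a standing assumption implicit in the hypothesis that $\int_E f \, d\mathbb P_\mathbf a$ is a well-defined real number for every $E \in \mathcal F$).
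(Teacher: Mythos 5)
Your proof is correct; it is the standard measure-theoretic argument showing that a function integrating to zero over every measurable set vanishes almost everywhere. The paper states Lemma~\ref{nullo} without proof (as a routine preparatory fact), so there is no alternative argument to compare against; your decomposition into $E_n = \{f \geq 1/n\}$ and $F_n = \{f \leq -1/n\}$ is exactly the expected route, and your closing caveat about integrability is the right point to flag.
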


\smallskip

We consider the push--forward of the probability measure $\mathbb
P_\mathbf{a}$, for any $\mathbf{a} \in \mathcal S$, through  the
flow $\phi_h$ on $\mathcal D$. In view of (\ref{e54}), one gets:
\begin{proposition} \label{pushprbdet}
For any $\mathbf{a} \in \mathcal S$, $h \geq 0$,
\[ \phi_h \# \mathbb P_\mathbf{a} = \mathbb P_{\mathbf{a} \, e^{-hA}}.\]
\end{proposition}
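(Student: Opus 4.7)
The plan is to exploit Proposition \ref{klenke}: since both $\phi_h \# \mathbb P_\mathbf{a}$ and $\mathbb P_{\mathbf{a}\, e^{-hA}}$ are probability measures on $(\mathcal D, \mathcal F)$, and any two finite measures on $\mathcal F$ that agree on cylinders must coincide, it suffices to verify equality on an arbitrary cylinder
\[
C = \mathcal C(t_1, \cdots, t_k; j_1, \cdots, j_k).
\]

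First I would compute $\phi_h^{-1}(C)$. Since $\phi_h(\omega)(s) = \omega(s+h)$, the requirement $\phi_h(\omega) \in C$ means $\omega(t_l + h) = j_l$ for $l = 1, \cdots, k$, so
\[
\phi_h^{-1}(C) = \mathcal C(t_1 + h, \cdots, t_k + h; j_1, \cdots, j_k),
\]
which is again a cylinder. Applying the defining formula \eqref{e54} for $\mathbb P_\mathbf{a}$ to this shifted cylinder yields
\[
\phi_h \# \mathbb P_\mathbf{a}(C) = \mathbb P_\mathbf{a}\bigl(\phi_h^{-1}(C)\bigr) = \bigl(\mathbf{a}\, e^{-A(t_1 + h)}\bigr)_{j_1} \prod_{l=2}^{k} \bigl(e^{-A((t_l + h) - (t_{l-1} + h))}\bigr)_{j_{l-1}\, j_l}.
\]
The shift cancels in every consecutive difference $(t_l + h) - (t_{l-1} + h) = t_l - t_{l-1}$, so the product factors reduce to $(e^{-A(t_l - t_{l-1})})_{j_{l-1}\, j_l}$.

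Next I would write $\mathbf{a}\, e^{-A(t_1 + h)} = (\mathbf{a}\, e^{-hA})\, e^{-At_1}$, using the semigroup property of the exponential. Substituting back gives
\[
\phi_h \# \mathbb P_\mathbf{a}(C) = \bigl((\mathbf{a}\, e^{-hA})\, e^{-At_1}\bigr)_{j_1} \prod_{l=2}^{k} \bigl(e^{-A(t_l - t_{l-1})}\bigr)_{j_{l-1}\, j_l},
\]
which is exactly $\mu_{\mathbf{a}\, e^{-hA}}(C) = \mathbb P_{\mathbf{a}\, e^{-hA}}(C)$ by the definition \eqref{e54}. By Proposition \ref{klenke} the two measures agree on $\mathcal F$, concluding the proof.

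There is essentially no hard step: the computation is a direct verification, and the only subtlety is the bookkeeping to check that the preimage of a cylinder under $\phi_h$ is again a cylinder with times shifted by $h$ and that the successive time increments entering \eqref{e54} are invariant under such a shift. The semigroup identity $e^{-A(t_1 + h)} = e^{-hA} e^{-At_1}$ is what exchanges the initial translation of the probability vector with the translation of $t_1$, making the two formulas identical.
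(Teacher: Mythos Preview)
Your proof is correct and is precisely the argument the paper has in mind: the paper does not spell out a proof but simply states the proposition ``in view of \eqref{e54}'', which amounts exactly to your cylinder computation followed by the appeal to Proposition \ref{klenke}.
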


\smallskip

Accordingly, for any measurable function $f: \mathcal D \to \mathbb R$, we have by
the change of variable formula
\begin{equation}\label{changevar}
   \mathbb E_\mathbf{a} f(\phi_h)  = \int_\mathcal D f(\phi_h(\omega))\, d \mathbb P_\mathbf{a}= \int_\mathcal D f(\omega) \, d \phi_h \#
\mathbb P_\mathbf{a} =  \mathbb E_{\mathbf{a} \, e^{-A h}} f.
\end{equation}

\smallskip

The  push--forward  of $\mathbb{P}_{\mathbf{a}}$ through
$\omega(t)$, which is a random variable for any $t$, is a
probability measure on indices. More precisely, we have by
(\ref{e54})
$$\omega(t) \# \mathbb{P}_{\mathbf{a}}(i)=  \mathbb{P}_{\mathbf{a}}(\{\omega \mid \omega(t) = i\}) = \left (\mathbf{a} \, e^{-A t} \right )_i$$ for
any index $i \in \{1,\cdots,m\}$, so that
\begin{equation}\label{e55}
\omega(t) \#  \mathbb{P}_{\mathbf{a}} =  \mathbf{a}\, e^{-A t}.
\end{equation}
Moreover for $\mathbf b =(b_1, \cdots, b_m) \in \mathbb{R}^m$, we have
$$\mathbb E_{\mathbf a} b_{\omega (t)}= \mathbf a \, e^{-At} \cdot \mathbf b.$$

\medskip

Formula (\ref{e55}) can be partially recovered  for measures of the
type $ \mathbb{P}_{\mathbf{a}} \mres E$ which means $
\mathbb{P}_{\mathbf{a}}$ is restricted to $E$, where $E$ is any set
in $\mathcal{F}$.

\smallskip

 \begin{lemma}\label{lem4} (\cite{siconolfi3} Lemma 3.4) \,  For a given $ \mathbf{a} \in \mathcal {S}$, $E \in \mathcal{F}_{t}$ for
 some $t \geq 0$, we have
 $$  \omega(s) \# (\mathbb{P}_{\mathbf{a}} \mres E) = \big (  \omega(t) \# ( \mathbb{P}_{\mathbf{a}} \mres E) \big )  \,   e^{-A(s-t) } \quad \mbox{ for
    any $s \geq t$}.$$
\end{lemma}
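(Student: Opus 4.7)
The plan is to reduce the identity, first, to the case where $E$ is a cylinder in $\mathcal{F}_t$, then extend to multi-cylinders by additivity, and finally to general $E \in \mathcal F_t$ by the approximation result in Proposition \ref{klenke}. Both sides of the claimed equality are finite measures on the index set $\{1,\dots,m\}$, which we identify with vectors in $\R^m$, so it suffices to check equality componentwise, i.e., to evaluate both sides at each index $j$.

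For the cylinder step, fix $E = \mathcal C(t_1,\dots,t_k;j_1,\dots,j_k)$ with $t_k \leq t$ (which is forced by $E \in \mathcal F_t$), and fix $s \geq t$. By definition of the push-forward and by partitioning according to the value of $\omega(t)$,
\[
\omega(s)\#(\mathbb P_\mathbf{a}\mres E)(j)=\mathbb P_\mathbf{a}\big(E\cap\{\omega\mid\omega(s)=j\}\big)=\sum_{i=1}^m \mathbb P_\mathbf{a}\big(\mathcal C(t_1,\dots,t_k,t,s;j_1,\dots,j_k,i,j)\big).
\]
Applying the explicit product formula \eqref{e54} to the cylinder on the right factors the last transition: each summand equals $\mathbb P_\mathbf{a}\big(\mathcal C(t_1,\dots,t_k,t;j_1,\dots,j_k,i)\big)\cdot\big(e^{-A(s-t)}\big)_{ij}$. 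Recognizing the prefactor as $\omega(t)\#(\mathbb P_\mathbf{a}\mres E)(i)$, we obtain exactly the matrix-vector product $\big(\omega(t)\#(\mathbb P_\mathbf{a}\mres E)\cdot e^{-A(s-t)}\big)_j$, which proves the identity for cylinders.

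Next, if $E = \bigsqcup_r E_r$ is a finite disjoint union of cylinders in $\mathcal F_t$, both $\omega(s)\#(\mathbb P_\mathbf{a}\mres\cdot)$ and $\omega(t)\#(\mathbb P_\mathbf{a}\mres\cdot)$ are additive in their argument, and right-multiplication by $e^{-A(s-t)}$ is linear, so the identity carries over. For a general $E \in \mathcal F_t$, use Proposition \ref{klenke} applied to the filtration $\mathcal F_t$ (equivalently, approximate within the cylinder algebra generating $\mathcal F_t$) to produce multi-cylinders $E_n \in \mathcal F_t$ with $\mathbb P_\mathbf{a}(E_n\triangle E)\to 0$. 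For every index $j$,
\[
\big|\omega(s)\#(\mathbb P_\mathbf{a}\mres E_n)(j)-\omega(s)\#(\mathbb P_\mathbf{a}\mres E)(j)\big|\leq\mathbb P_\mathbf{a}(E_n\triangle E)\to 0,
\]
and the same bound holds for $\omega(t)\#$; passing to the limit in the multi-cylinder identity, and using continuity of $\mathbf{b}\mapsto\mathbf{b}\,e^{-A(s-t)}$, yields the asserted equality.

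There is no real obstacle here; the only point requiring minor care is ensuring that the approximating multi-cylinders can be chosen inside $\mathcal F_t$ (so that $\omega(t)\#(\mathbb P_\mathbf a\mres E_n)$ is well-defined and the cylinder calculation applies), but this is immediate from the fact that $\mathcal F_t$ itself is generated by the cylinders with times $\leq t$, so Proposition \ref{klenke} applied relative to this sub--$\sigma$-algebra delivers multi-cylinder approximants in $\mathcal F_t$.
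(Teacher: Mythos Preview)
The paper does not actually prove this lemma; it is quoted from \cite{siconolfi3} (Lemma 3.4 there) and stated without proof in Appendix \ref{randomset}. So there is no in-paper argument to compare against.

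Your proof is correct and follows the natural route: verify the identity on cylinders via the explicit product formula \eqref{e54}, extend by additivity to multi-cylinders, and pass to general $E\in\mathcal F_t$ by approximation. The cylinder computation is clean; the factorization $\mathbb P_\mathbf{a}\big(\mathcal C(t_1,\dots,t_k,t,s;\dots,i,j)\big)=\mathbb P_\mathbf{a}\big(\mathcal C(t_1,\dots,t_k,t;\dots,i)\big)\cdot\big(e^{-A(s-t)}\big)_{ij}$ is exactly what \eqref{e54} gives, and the sum over $i$ reassembles the right-hand side. One cosmetic point: the cylinders in the paper are defined with strictly increasing time sequences, so when $t_k=t$ or $s=t$ the notation $\mathcal C(\dots,t_k,t,\dots)$ is formally ill-posed; but since $e^{-A\cdot 0}=I$ the formula degenerates correctly (the spurious index is forced to equal its neighbor), and you could simply treat those boundary cases separately in a line. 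Your observation that Proposition \ref{klenke} must be invoked relative to $\mathcal F_t$ rather than $\mathcal F$---so that the approximants $E_n$ are themselves multi-cylinders with times $\leq t$---is the one genuine care point, and you handle it correctly: the Approximation Theorem applies to any $\sigma$-algebra generated by an algebra, and the multi-cylinders with times $\leq t$ form such an algebra generating $\mathcal F_t$.
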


\medskip

\noindent \textbf{Admissible controls:} We call control any random variable $\Xi$  taking values in $\mathcal D(0,+\infty;\mathbb{R}^{N})$ such that

\begin{itemize}
    \item[(i)] it is locally bounded (in time), i.e. for any $t >0$ there is
    $M >0$ with
    $$
    \sup_{[0,t]} |\Xi(t)| < M \qquad\mbox{ a.s.}
$$
    \item[(ii)] it is nonanticipating, i.e. for any $t >0$
    $$\omega_1=\omega_2 \;\mbox{in $[0,t]$} \;\; \Rightarrow  \;\; \Xi(\omega_1)=\Xi(\omega_2) \;\hbox{in
    $[0,t]$.}$$

The second condition is equivalent to require $\Xi$ to be adapted to the filtration $\mathcal{F}_{t}$ which means that the map $$\omega \mapsto \Xi(\omega)(t)$$
from $\mathcal{D}$ to $\mathbb{R}^{N}$ is measurable with respect $\mathcal{F}_{t}$ and the Borel $\sigma$--algebra on $\mathbb{R}^{N}$.
\end{itemize}
We will denote by $\mathcal K$ the class of admissible controls.

   \medskip
\noindent \textbf{Stopping times:}

   \begin{definition} A stopping time, adapted to $\mathcal F_{t}$, is a nonnegative random variable $\tau$ satisfying
$$\{ \tau \leq t\}   \in \mathcal F_t \qquad\mbox{for any $t$,} $$
which also implies $ \{ \tau < t\}, \, \{ \tau = t\}  \in \mathcal
F_t$.
\end{definition}
\smallskip

We will repeatedly use the following non increasing approximation of
a bounded random variable
 $\tau$ by simple stopping times. We set
\begin{equation}\label{splstopping}
    \tau_n = \sum_j \frac j{2^n} \, \mathbb{I}(\{\tau \in [(j-1)/2^n,j/2^n)\}),
\end{equation}
where $\mathbb I(\cdot)$ stands for the {\em indicator function} of
the set at the argument. We have for any $j$, $n$
\[\{\tau_n = j/2^n\} =\{\tau < j/2^n\} \cap \{\tau \geq
(j-1)/2^n\} \in \F_{j/2^n},\]  moreover the sum in
(\ref{splstopping})  is finite, being $\tau$ bounded. Hence $\tau_n$
are simple stopping times and letting $n$ go to infinity we get:
\smallskip

\begin{proposition} \label{stoppapproxi} Given a bounded stopping time
$\tau$, the $\tau_n$,  defined as in (\ref{splstopping}), make up a
sequence of simple stopping times with
\[
\tau_n \geq \tau, \quad \tau_n \to \tau  \quad\hbox{uniformly in
$\mathcal D$.}
\]
\end{proposition}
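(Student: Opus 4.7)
The plan is to verify the three claims about $\tau_n$ in turn, using only the definition \eqref{splstopping} together with the boundedness of $\tau$. The content of the statement is essentially a dyadic discretization argument, and the work has largely been done in the lines immediately preceding the proposition; what remains is to organize the observations cleanly.

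First I would check that $\tau_n$ is a simple stopping time. The sum in \eqref{splstopping} is finite because $\tau$ is bounded, say $\tau \leq T$, so only finitely many indices $j$ (those with $(j-1)/2^n \leq T$) contribute. The set $\{\tau_n = j/2^n\} = \{\tau \in [(j-1)/2^n, j/2^n)\} = \{\tau < j/2^n\} \cap \{\tau \geq (j-1)/2^n\}$ belongs to $\mathcal F_{j/2^n}$, as already noted in the text using that $\tau$ is a stopping time. So $\tau_n$ takes finitely many deterministic values, each on an $\mathcal F$--measurable set lying in an appropriate $\mathcal F_t$, which is exactly the definition of a simple stopping time.

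Next I would verify $\tau_n \geq \tau$ and the uniform bound. For any $\omega$, there is a unique $j$ with $\tau(\omega) \in [(j-1)/2^n, j/2^n)$, and on this event $\tau_n(\omega) = j/2^n$. Hence
\[ 0 \leq \tau_n(\omega) - \tau(\omega) < \frac{j}{2^n} - \frac{j-1}{2^n} = \frac{1}{2^n}. \]
In particular $\tau_n \geq \tau$ everywhere, and $\sup_{\omega \in \mathcal D} |\tau_n(\omega) - \tau(\omega)| \leq 2^{-n}$, which gives the uniform convergence $\tau_n \to \tau$ on $\mathcal D$ as $n \to \infty$.

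There is no genuine obstacle; the only point requiring mild care is to note that although the formal sum in \eqref{splstopping} is indexed over all $j \in \mathbb N$, the boundedness of $\tau$ ensures that only finitely many summands are nonzero, so $\tau_n$ takes only finitely many values and the notion of \emph{simple} stopping time applies. Everything else is immediate from the dyadic estimate $\tau_n - \tau < 2^{-n}$.
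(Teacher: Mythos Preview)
Your proof is correct and follows exactly the approach sketched in the paper: the paper records the key facts $\{\tau_n = j/2^n\} \in \mathcal F_{j/2^n}$ and finiteness of the sum just before the statement, and you simply spell out the dyadic estimate $0 \leq \tau_n - \tau < 2^{-n}$ that gives both $\tau_n \geq \tau$ and uniform convergence.
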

\smallskip

Given  a bounded stopping time $\tau$ and  a pair $x$, $y$ of
elements of $\mathbb{T}^{N}$, we set
  $$\mathcal {K}(\tau,y-x)=
  \left \{ \Xi \in \mathcal{K} \mid \mathcal I(\Xi)(\tau)  = y-x \, \mbox{a.s.} \right \}, $$
where the symbol $-$ refers to the structure of additive group on
$\mathbb T^N$ induced by the  projection of $\R^N$ onto $\mathbb T^N
= \R^N/\Z^N$.   The controls belonging to $\mathcal {K}(\tau,0)$ are
called $\tau$--cycles.
\end{appendices}


\end{document}